\newcommand{\Z}{\mathbb{Z}}						
\newcommand{\R}{\mathbb{R}}						
\newcommand{\C}{\mathbb{C}}						
\renewcommand{\S}{\mathbb{S}}					
\newcommand{\B}{\mathbb{B}}
\newcommand{\eps}{\varepsilon}					
\newcommand{\dd}								
	{\mathop{}\!\mathrm{d}}						
\newcommand{\ddn}[1]							
	{\mathop{}\!\mathrm{d^{#1}}}
\newcommand{\abs}[1]							
	{\left| #1 \right|}
\newcommand{\smallabs}[1]						
	{\lvert #1 \rvert}	
\newcommand{\norm}[1]							
	{\left\lVert #1 \right\rVert}	
\newcommand{\smallnorm}[1]						
	{\lVert #1 \rVert}						
\newcommand{\ip}[2]								
	{\left< #1 , #2 \right>}
\DeclareMathOperator{\intr}{int}				
\DeclareMathOperator{\vol}{vol}					
\DeclareMathOperator{\spt}{spt}					
\DeclareMathOperator*{\esssup}{ess\,sup}		
\DeclareMathOperator{\capac}{Cap}			
\DeclareMathOperator{\sgn}{sgn}
\DeclareMathOperator{\pseudosup}{\widetilde{\sup}}
\newcommand{\hodge}{\mathtt{\star}\hspace{1pt}}
\newcommand{\loc}{\mathrm{loc}}
\newcommand{\cA}{\mathcal{A}}
\newcommand{\cC}{\mathcal{C}}
\newcommand{\cH}{\mathcal{H}}
\newcommand{\cS}{\mathcal{S}}
\newcommand{\cX}{\mathcal{X}}
\renewcommand{\phi}{\varphi}
\def\Xint#1{\mathchoice
	{\XXint\displaystyle\textstyle{#1}}%
	{\XXint\textstyle\scriptstyle{#1}}%
	{\XXint\scriptstyle\scriptscriptstyle{#1}}%
	{\XXint\scriptscriptstyle\scriptscriptstyle{#1}}%
	\!\int}
\def\XXint#1#2#3{{\setbox0=\hbox{$#1{#2#3}{\int}$}
		\vcenter{\hbox{$#2#3$}}\kern-.5\wd0}}
\def\dashint{\Xint-}
\newtheorem{thm}{Theorem}[section]{\bf}{\it}
\newtheorem{lemma}[thm]{Lemma}
\newtheorem{prop}[thm]{Proposition}
\newtheorem{cor}[thm]{Corollary}
\theoremstyle{definition}
\newtheorem{ex}[thm]{Example}
\theoremstyle{remark}
\newtheorem{rem}[thm]{Remark}
\numberwithin{equation}{section}
\begin{document}

\title{Quasiregular values and Rickman's Picard theorem}

\author[I. Kangasniemi]{Ilmari Kangasniemi}
\address{Department of Mathematical Sciences, University of Cincinnati, P.O.\ Box 210025, Cincinnati, OH 45221, USA.}
\email{kangaski@ucmail.uc.edu}

\author[J. Onninen]{Jani Onninen}
\address{Department of Mathematics, Syracuse University, Syracuse,
NY 13244, USA and  Department of Mathematics and Statistics, P.O.Box 35 (MaD) FI-40014 University of Jyv\"askyl\"a, Finland
}
\email{jkonnine@syr.edu}

\subjclass[2020]{Primary 30C65; Secondary 35R45}
\date{\today}
\keywords{Picard theorem, Rickman's Picard Theorem, Quasiregular maps, Quasiregular values}
\thanks{I. Kangasniemi was supported by the National Science Foundation grant DMS-2247469. J. Onninen was supported by the National Science Foundation grant DMS-2154943. }

\begin{abstract}
We prove a far-reaching generalization of Rickman's Picard theorem for a surprisingly large class of mappings, based on the recently developed theory of quasiregular values. Our results are new even in the planar case.
\end{abstract}

\maketitle

\section{Introduction}
Geometric Function Theory (GFT) is largely concerned with generalizations of the theory of holomorphic functions of one complex variable. A widely studied example is the theory of quasiregular maps, which provides such a generalization for spaces of real dimension $n \geq 2$. We recall that, given a domain $\Omega \subset \R^n$ and a constant $K \geq 1$, a \emph{$K$-quasiregular map} $f \colon \Omega \to \R^n$ is a continuous map in the Sobolev space $W^{1,n}_\loc(\Omega, \R^n)$ which satisfies the distortion inequality
\begin{equation}\label{eq:QR_def}
	\abs{Df(x)}^n \leq K J_f(x)
\end{equation}
for almost every (a.e.)\ $x \in \Omega$. Here, $\abs{Df(x)}$ is the operator norm of the weak derivative of $f$ at $x$, and $J_f$ denotes the Jacobian determinant of $f$.

A significant achievement in the theory of higher-dimensional quasiregular maps is the extension of the classical Picard theorem to $n$ real dimensions. This highly non-trivial result is due to Rickman~\cite{Rickman_Picard}.

\begin{thm}[Rickman's Picard Theorem]\label{thm:Rickman_Picard}
	For every $K \geq 1$ and  $n \geq 2$, there exists a positive integer $q = q(n, K) \in \Z_{> 0}$ such that if $f \colon \R^n \to \R^n$ is $K$-quasiregular and $\R^n \setminus f(\R^n)$ contains $q$ different points, then $f$ is constant.
\end{thm}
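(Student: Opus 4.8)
The plan is to argue by contradiction, following Rickman's value-distribution strategy. Suppose $f \colon \R^n \to \R^n$ is $K$-quasiregular, non-constant, and omits the points $a_1, \dots, a_q$. By Reshetnyak's theorem a non-constant quasiregular map is discrete and open with $J_f > 0$ a.e., so it is non-constant on every ball and for $t \ge 1$ the image $f(B(0,t))$ contains the fixed nonempty open set $f(B(0,1))$. Viewing $f$ as a map into $\S^n = \overline{\R^n}$ and writing $n(t,y)$ for the number of $y$-points of $f$ in $B(0,t)$ counted with multiplicity, the spherical area $A(t) = \int_{\S^n} n(t,y)\, d\Omega(y)$ (with $\Omega$ the normalized spherical measure) is thus bounded below by a positive constant, so the Ahlfors--Shimizu characteristic $T(r) = \int_1^r A(t)\, t^{-1}\, dt$ tends to $\infty$. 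With $N(r,y) = \int_1^r n(t,y)\, t^{-1}\, dt$ and $\delta(y) = 1 - \limsup_{r \to \infty} N(r,y)/T(r)$, each omitted value has $n(t,a_i) \equiv 0$, hence $\delta(a_i) = 1$; likewise $\delta(\infty) = 1$, since $f$ never attains $\infty$. Consequently the theorem reduces entirely to \emph{Rickman's defect relation}
\begin{equation*}
	\sum_{y \in \S^n} \delta(y) \le C(n,K),
\end{equation*}
from which $q + 1 \le C(n,K)$; hence $q(n,K)$ may be taken to be $\lfloor C(n,K) \rfloor + 1$.

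The whole difficulty is the defect relation, which I would establish through Rickman's ``Main Inequality'', a quasiregular analogue of the Ahlfors--Shimizu Second Main Theorem. The skeleton is as follows. Fix a large integer $p$, pick values $b_1, \dots, b_p \in \S^n$ (the most deficient ones, up to $\eps$) and a small $\rho$ so that the balls $B(b_j, \rho) \subset \S^n$ are pairwise disjoint. For each $r$, decompose $B(0,r)$ according to the $f$-preimages of the balls $B(b_j, \rho)$ and of $\S^n \setminus \bigcup_j \overline{B(b_j, \rho)}$; apply the V\"ais\"al\"a--Poletsky modulus inequality for quasiregular maps to each resulting component and recast the path-family moduli as conformal capacities of ring-type condensers. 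Summing and estimating the capacities yields an inequality schematically of the form $\sum_{j=1}^p \bigl( T(r) - N(r, b_j) \bigr) \le C_0(n,K)\, T(r) + E(r)$, a nonnegative ramification term having already been discarded, with $E(r)$ controlled by quantities on the spheres $S(0,r)$. The pivotal point, exactly as in the holomorphic case, is that $E(r) = o(T(r))$ holds for all $r$ outside an exceptional set of finite logarithmic measure; this is extracted from an integrated estimate via a Borel--Nevanlinna growth lemma. Letting $r \to \infty$ along good radii, dividing by $T(r)$, then sending $\rho \to 0$, $\eps \to 0$ and $p \to \infty$ gives $\sum_y \delta(y) \le C_0(n,K)$, which is the defect relation.

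The two places I expect genuine resistance are: \emph{(a)} proving the V\"ais\"al\"a--Poletsky modulus inequality with a constant depending only on $n$ and $K$, together with the capacity estimates for the condensers that arise --- this rests on Poletsky's path-lifting theorem for quasiregular maps and is the technical core peculiar to the quasiregular setting; and \emph{(b)} the geometric bookkeeping of the preimage components over an increasingly fine cover of $\S^n$, which in Rickman's original argument is the most delicate part and is exactly what pins down the dependence $q = q(n,K)$. A cleaner route, which I would also consider writing out, replaces value distribution by nonlinear potential theory in the style of Eremenko--Lewis and Holopainen--Rickman: when $q$ is large the punctured target $\R^n \setminus \{a_1, \dots, a_q\}$ supports a conformal density whose punctures together carry a ``charge'' growing with $q$, and its pullback under $f$ --- controlled because the pullback of an $\cA$-harmonic structure by a quasiregular map is again $\cA$-harmonic with structure constants depending only on $n$ and $K$ --- would force $\R^n$ to admit a nonconstant bounded $\cA$-harmonic function, contradicting the $n$-parabolicity of $\R^n$ once $q$ exceeds a bound depending only on $n$ and $K$. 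There the hard estimates are the quantitative Harnack and Caccioppoli inequalities for $\cA$-harmonic functions with $(n,K)$-controlled structure.
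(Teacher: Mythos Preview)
Your outline is a fair sketch of the two classical routes --- Rickman's original value-distribution argument via the defect relation and the Main Inequality, and the Eremenko--Lewis potential-theoretic alternative --- and both are valid strategies for the theorem as stated. Neither, however, is what the paper does.

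The paper does not prove Rickman's Picard theorem directly. Instead it proves the much more general Theorem~\ref{thm:Picard_for_QR_values} (Picard for quasiregular \emph{values}) and then, in Remark~\ref{rem:deriving_Rickman's_Picard}, recovers the classical statement as the special case $\Sigma \equiv 0$: a $K$-quasiregular $f$ has a $(K,0)$-quasiregular value at every point, so Theorem~\ref{thm:Picard_for_QR_values} bounds $\lvert \partial f(\R^n)\rvert$ by $q(n,K)$; if $f$ omitted $q+1$ points then $\R^n \setminus \overline{f(\R^n)}$ would be nonempty, and then either Reshetnyak's openness (forcing $\partial f(\R^n)$ to separate two nonempty open sets, hence to have positive topological dimension) or the Liouville theorem (after a M\"obius rotation sending a point of the complement to $\infty$) yields a contradiction. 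The engine behind Theorem~\ref{thm:Picard_for_QR_values} is the Bonk--Poggi-Corradini approach: one pulls back a logarithmic potential $v$ on $\S^n$ satisfying $\Delta_n v = J_{s_n}$, proves Caccioppoli-type growth estimates for the signed measure $A_f = f^*\vol_{\S^n}$ directly from the defining inequality, and combines these with Rickman's Hunting Lemma and a capacity lower bound to force $q \lesssim_n K^{n(n-1)}$.

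The trade-off: your approaches are the historically established ones, but both rely on machinery --- V\"ais\"al\"a--Poletsky path-lifting and modulus inequalities in the first, the induced conformal structure $G_f = J_f^{2/n}(D^T\!f\,Df)^{-1}$ and $\cA$-harmonic theory in the second --- that breaks down for maps satisfying only \eqref{eq:QRvalue_def}, where $J_f$ may vanish or change sign on large sets. The paper's point is precisely that the Bonk--Poggi-Corradini framework can be made to run without either of these tools, which is why it extends so far beyond the quasiregular class. For the classical theorem alone your routes are fine, but they would not have led to the paper's main results.
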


Rickman's theorem leaves an impression that the global distortion control of quasiregular mappings is necessary for the bound on the number of omitted points. However, in this article, we show that the distortion bound only needs to hold in an asymptotic sense when $f$ is near the omitted points, and can in fact be replaced with an appropriate Sobolev norm estimate elsewhere. Our result is formulated using the recent theory of quasiregular values \cite{Kangasniemi-Onninen_1ptReshetnyak}. In particular, supposing that $y_0 \in \R^n$ and that $\Omega$ is a domain in $\R^n$ with $n \ge 2$, a map $f \colon \Omega \to \R^n$ in the Sobolev space $W^{1,n}_\loc(\Omega, \R^n)$ has a \emph{$(K, \Sigma)$-quasiregular value at $y_0$} if it satisfies the inequality 
\begin{equation}\label{eq:QRvalue_def}
	\abs{Df(x)}^n \leq K J_f(x) + \abs{f(x)-y_0}^n \Sigma(x)
\end{equation}
for a.e.\ $x \in \Omega$, where $K \geq 1$ is a constant as in \eqref{eq:QR_def} and $\Sigma$ is a nonnegative function in $L^{1+\eps}_\loc(\Omega)$ for some $\eps >0$. 

Notably, \eqref{eq:QRvalue_def} only provides control on the distortion of a function $f$ as $f(x)$ equals or asymptotically approaches $y_0$. Away from $y_0$, these functions can instead behave similarly to an arbitrary map in $W^{n+n\eps}_\loc(\Omega, \R^n)$. For instance, a non-constant map $f$ satisfying \eqref{eq:QRvalue_def} may for instance have a Jacobian that changes sign, an entirely 1-dimensional image, or a bounded image even when $f$ is defined in all of $\R^n$.  In addition, a map $f$ satisfying~\eqref{eq:QRvalue_def} needs not be locally quasiregular even in any neighborhood of a point $x_0 \in f^{-1} \{y_0\}$; in fact, it is possible that every neighborhood of such a point meets a region where $J_f <0$.

In spite of these vast differences, Rickman's Picard theorem still generalizes to the theory of quasiregular values in the following form.

\begin{restatable}{thm}{mainthm}\label{thm:Picard_for_QR_values}
	Let $K \geq 1$ and $\Sigma \in L^{1+\eps}(\R^n) \cap L^{1-\eps}(\R^n)$ for some $\eps > 0$. Then there exists a positive integer $q = q(n, K) \in \Z_{> 0}$ such that no  continuous map $f \in W^{1,n}_\loc(\R^n, \R^n)$ has a $(K, \Sigma)$-quasiregular value at $q$ distinct points $y_1, \dots, y_q \in \partial f(\R^n)$.
\end{restatable}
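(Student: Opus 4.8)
The plan is to argue by contradiction: suppose that for $q$ as large as we please there is a non-constant continuous $f\in W^{1,n}_\loc(\R^n,\R^n)$ with a $(K,\Sigma)$-quasiregular value at each of $q$ distinct points $y_1,\dots,y_q\in\partial f(\R^n)$. I would first reduce to omitted values. Using the local theory of quasiregular values from \cite{Kangasniemi-Onninen_1ptReshetnyak} — in particular that $f$ is discrete and open at every point of $f^{-1}\{y_i\}$ where it is not locally constant — if $f(x_0)=y_i$ for some $x_0$, then $y_i$ would lie in the interior of $f(\R^n)$, contradicting $y_i\in\partial f(\R^n)$; the locally-constant configurations are excluded by the same local structure together with the connectedness of $\R^n$. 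Thus $f\colon\R^n\to\R^n\setminus\{y_1,\dots,y_q\}$ is non-constant and, since $y_i\in\partial f(\R^n)$, takes values arbitrarily close to each $y_i$. Set $\delta=\tfrac14\min_{i\ne j}\abs{y_i-y_j}$, so the balls $B(y_i,\delta)$ are pairwise disjoint, and let $A_i:=f^{-1}(B(y_i,\delta))$; these are pairwise disjoint nonempty proper open subsets of $\R^n$.

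Next I would extract the basic energy estimate from \eqref{eq:QRvalue_def}. For $0<s<\delta$ put
\[
	w_i^s\;=\;\min\!\Bigl(1,\ \tfrac{1}{\log(\delta/s)}\log\tfrac{\delta}{\abs{f-y_i}}\Bigr)_{\!+},
\]
so that $w_i^s\equiv1$ on $f^{-1}(B(y_i,s))$ (nonempty, as $y_i\in\partial f(\R^n)$), $w_i^s\equiv0$ off $A_i$, and $\abs{\nabla w_i^s}\le (\log(\delta/s))^{-1}\abs{Df}/\abs{f-y_i}$ a.e.\ on $A_i$. Plugging \eqref{eq:QRvalue_def} with $y_0=y_i$ into $\abs{Df}^n/\abs{f-y_i}^n$ and discarding the nonpositive term $-K J_f^-\abs{f-y_i}^{-n}$ gives
\[
	\int_{\R^n}\abs{\nabla w_i^s}^n\;\le\;\frac{1}{(\log(\delta/s))^n}\left(K\int_{A_i}\frac{J_f^+}{\abs{f-y_i}^n}\,dx\;+\;\int_{A_i}\Sigma\,dx\right).
\]
Summing over $i$ and using disjointness of the $A_i$, the $\Sigma$-contribution is at most $(\log(\delta/s))^{-n}\norm{\Sigma}_{L^1(\R^n)}$, which tends to $0$ as $s\to0$: this is exactly where the global hypothesis $\Sigma\in L^{1+\eps}(\R^n)\cap L^{1-\eps}(\R^n)\subset L^1(\R^n)$ is used — it makes the error accumulated over all of $\R^n$ finite, hence negligible after the $(\log(\delta/s))^{-n}$ gain. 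The positive terms $\int_{A_i}J_f^+\abs{f-y_i}^{-n}$ would be bounded via the one-point analogue of Poletsky's modulus/counting inequality near a quasiregular value (again \cite{Kangasniemi-Onninen_1ptReshetnyak}) by the local index of $f$ at $y_i$ times $\omega_{n-1}\log(\delta/s)$, up to a further error of the same negligible type (the region $\{J_f<0\}$, on which \eqref{eq:QRvalue_def} forces $J_f^-\le K^{-1}\abs{f-y_i}^n\Sigma$, contributes only an $L^1(\Sigma)$-controlled amount).

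With these estimates in place, the heart of the matter is a Rickman-type value-distribution (defect-relation) argument: the $w_i^s$ are disjointly supported $[0,1]$-valued functions, each attaining $1$ and $0$, with small $n$-energy, and one shows that too many of them cannot coexist for a globally defined non-constant $f$. Concretely one runs the $\mathcal A_f$-potential-theoretic machinery — Caccioppoli estimates for the relevant $\mathcal A_f$-sub/supersolutions on large balls $B(0,R)$, path-lifting and modulus inequalities near each $y_i$, and a Harnack/minimum-principle input — with every quasiregularity ingredient replaced by its one-point counterpart from \cite{Kangasniemi-Onninen_1ptReshetnyak} and every resulting error dominated by $\int_{\R^n}\Sigma<\infty$; the two-sided bound $\Sigma\in L^{1+\eps}\cap L^{1-\eps}$ supplies the equi-integrability and the decay of $\int_{\R^n\setminus B(0,R)}\Sigma$ that force these errors to $0$ as $R\to\infty$. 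Letting $R\to\infty$ and then $s\to0$ yields an estimate $q\,c(n,K)\le C(n,K)$, i.e.\ $q\le q(n,K)$, which is the desired contradiction. (An alternative to this last stage would be a Zalcman-type rescaling near $f^{-1}(B(y_i,\delta))$ reducing matters directly to \Cref{thm:Rickman_Picard}, but one then has to control the non-constancy of the blow-up and the target positions of all $q$ omitted values simultaneously.)

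The step I expect to be the main obstacle is the replacement, throughout, of the clean principle ``if $v$ is $\mathcal A$-harmonic then $v\circ f$ is $\mathcal A_f$-harmonic'' — valid for honest $K$-quasiregular maps — by one-sided differential inequalities that survive both the sign changes of $J_f$ and the $\Sigma$-perturbation, so that the ensuing error terms are always of the form $(\text{small factor})\times\int\Sigma$. This is precisely the point at which the asymptotic-near-$y_i$ nature of \eqref{eq:QRvalue_def} is both indispensable and delicate, and it is what allows the value-distribution argument to be globalized despite the complete absence of any global distortion control on $f$.
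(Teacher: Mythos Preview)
Your proposal has a genuine gap at precisely the point you yourself flag as ``the main obstacle''. The $\cA_f$-potential-theoretic machinery you invoke (Caccioppoli estimates for $\cA_f$-sub/supersolutions, Harnack, path-lifting, modulus) rests on the conformal structure $G_f(x)=J_f^{2/n}(x)[D^Tf(x)Df(x)]^{-1}$, and this object is simply undefined for maps satisfying only \eqref{eq:QRvalue_def}: $J_f$ may vanish or change sign on sets of positive measure, so there is no $\cA_f$ to work with. The paper makes this explicit in the introduction and deliberately \emph{eliminates} all $\cA$-subharmonic theory from the argument. Likewise, path-lifting and modulus inequalities require $f$ to be discrete and open on a neighborhood, whereas the single-value Reshetnyak theorem only gives openness \emph{at} the fibre $f^{-1}\{y_i\}$; every neighborhood of such a point may meet a region where $J_f<0$. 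Your ``Poletsky-type'' bound on $\int_{A_i}J_f^+\abs{f-y_i}^{-n}$ therefore has no available proof, and indeed this integral need not be comparable to $\log(\delta/s)$.

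The paper's actual route is quite different. It follows Bonk--Poggi-Corradini: pull back the spherical volume to obtain a signed measure $A_f$, and study the logarithmic-singularity function $v$ with $\Delta_n v = J_{s_n}$ directly, via Caccioppoli-type identities that never mention $\cA_f$. A new ingredient is a ``pseudosupremum'' over unbounded components of superlevel sets, needed because $\{\abs{f}>r\}$ can have bounded components here. This, together with Rickman's hunting lemma, handles the case $\abs{A_f}(\R^n)=\infty$ (and gives Theorem~\ref{thm:Bonk-PoggiCorradini-part} under the weaker hypothesis $\Sigma\in L^1\cap L^{1+\eps}_\loc$). The genuinely hard part, which your sketch does not touch, is ruling out $\abs{A_f}(\R^n)<\infty$: one shows that then $\abs{Df}/\abs{f}\in L^n(\R^n)$, passes to the induced map $G\colon\R^n\to\R\times\S^{n-1}$, and proves $G_\R=\log\abs{f}$ is bounded via a combination of global higher integrability ($L^{1+\eps}$) and a delicate logarithmic lower-integrability estimate that \emph{requires} $\Sigma\in L^{1-\eps}$. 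Your explanation of the role of $L^{1-\eps}$ --- tail decay of $\int_{\R^n\setminus B(0,R)}\Sigma$ --- is incorrect; that follows from $L^1$ alone, and the examples in Section~\ref{sect:counterexamples} show $L^1\cap L^{1+\eps}$ is insufficient.
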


While the standard Rickman's Picard theorem concerns omitted points $y_i \notin f(\R^n)$, Theorem \ref{thm:Picard_for_QR_values} reveals that at this level of generality, Rickman's Picard Theorem is in fact a result on the number of points $y_i$ in the boundary $\partial f(\R^n)$. Indeed, a version of Theorem \ref{thm:Picard_for_QR_values} that instead assumes $y_1, \dots, y_q \notin f(\R^n)$ is immediately shown to be false by every single smooth compactly supported map $f \in C^\infty_0(\R^n, \R^n)$.  Regardless of this difference in statements, the standard Rickman's Picard theorem follows almost immediately from the  case $\Sigma \equiv 0$ of Theorem \ref{thm:Picard_for_QR_values}; see Remark \ref{rem:deriving_Rickman's_Picard}.

The integrability assumptions on $\Sigma$ in Theorem \ref{thm:Picard_for_QR_values} are sharp on the $L^p$-scale. Indeed, we show in Section \ref{sect:counterexamples} that  neither $\Sigma \in L^{1+\eps}_\loc(\R^n) \cap L^1(\R^n) \cap L^{1-\eps}(\R^n)$ nor $\Sigma \in L^{1+\eps}(\R^n) \cap L^{1}(\R^n)$ is sufficient for the result. The constructed maps even satisfy~\eqref{eq:QRvalue_def} with $K = 1$. We however expect a logarithmic Orlicz-type sharpening of the integrability assumptions to be possible, though we elect not to pursue log-scale results in this work unless explicitly required by an argument.

\subsection{Background on quasiregular maps and the Picard theorem}

The classical Picard theorem states that if $f \colon \C \to \C$ is an entire holomorphic function, then either $f$ is constant or $\C \setminus f(\C)$ contains at most one point. The Picard theorem is among the most striking and universally known results in complex analysis, with numerous different proofs discovered over the years: see e.g.~\cite{Ahlfors-Conformal-invariants, Borel-Sur, David-Picard, Fuchs-Topics, Hayman-Meromorphic, Lewis_Picard-with-Harnack, Segal-Nine, Zhang-Curves}.

The theory of quasiregular maps originates from the planar setting, with roots in the work of  Gr\"otzsch \cite{Grotzsch_QC} and Ahlfors \cite{Ahlfors_QC}. More specifically, when $n = 2$, the distortion inequality \eqref{eq:QR_def} can be rewritten as a linear Beltrami-type equation
\begin{equation}\label{eq:QR_def_Beltrami}
	f_{\overline{z}} = \mu f_z,
\end{equation}
where $f_z, f_{\overline{z}}$ are the (weak) Wirtinger derivatives of $f$ and $\mu \in L^\infty(\Omega, \C)$ satisfies $\norm{\mu}_{L^\infty} \leq k < 1$ with $k = (K-1)/(K+1)$. If $K = 1$, then \eqref{eq:QR_def_Beltrami} reduces to the Cauchy-Riemann system $f_{\overline{z}} = 0$; indeed, a planar map is $1$-quasiregular exactly if it is holomorphic. Moreover, homeomorphic solutions of \eqref{eq:QR_def} or \eqref{eq:QR_def_Beltrami} are called \emph{$K$-quasiconformal}, and we also have that a map is $1$-quasiconformal exactly if it is a conformal transformation. 

In addition to this link to holomorphic maps, planar quasiregular maps satisfy the Sto\"ilow factorization theorem, which states that a  quasiregular map $f \colon \Omega \to \C$ is of the form $f = h \circ g$ where $g \colon \Omega \to \Omega$ is quasiconformal and $h \colon \Omega \to \C$ is holomorphic, see e.g.\ \cite[Chapter 5.5]{Astala-Iwaniec-Martin_Book}. The Sto\"ilow factorization theorem immediately generalizes the topological properties of holomorphic maps to planar quasiregular maps, such as the open mapping theorem, the Liouville theorem, and even the Picard theorem.

The higher-dimensional version of the theory began with the study of $n$-dimensional quasiconformal mappings by e.g.\ \u{S}abat \cite{Sabat_QC}, V\"ais\"al\"a \cite{Vaisala_QCInSpace}, Gehring \cite{Gehring_TAMS}, and Zori\v{c} \cite{Zorich_QR}. Afterwards, the theory of $n$-dimensional quasi\-regular mappings was originated by Reshetnyak \cite{Reshetnyak_continuity,Reshetnyak_QROrigin, Reshetnyak_Theorem2, Reshetnyak_Liouville}, with significant early contributions by Martio, Rickman, and V\"ais\"al\"a \cite{Martio-Rickman-Vaisala_AASF-QR1, Martio-Rickman-Vaisala_AASF-QR2, Martio-Rickman-Vaisala_AASF-QR3}. The theory is by now a central topic in modern analysis, with important connections to partial differential equations, complex dynamics, differential geometry and the calculus of variations; see the textbooks of V\"ais\"al\"a \cite{Vaisala_book}, Rickman \cite{Rickman_book}, Reshetnyak \cite{Reshetnyak-book}, and Iwaniec and Martin \cite{Iwaniec-Martin_book}.

Unlike in the planar case, one cannot reduce the topological properties of higher dimensional quasiregular maps to a better understood class of mappings. Indeed, the best known Sto\"ilow-type theorem in higher dimensions \cite{Martin-Peltonen_UQRStoilow} still has a relatively irregular non-injective component. Nevertheless, many topological properties of holomorphic maps have non-trivial extensions to spatial quasiregular mappings as well. For instance, the open mapping theorem generalizes to Reshetnyak's theorem \cite{Reshetnyak_QROrigin, Reshetnyak_Theorem2}, which states that if $f \colon \Omega \to \R^n$ is a non-constant quasiregular map, then $f$ is an open, discrete map with positive local index $i(x, f)$ at every $x \in \Omega$. 

Rickman's Picard theorem, stated in Theorem~\ref{thm:Rickman_Picard}, is perhaps the most clear demonstration of the similarities between the theory of higher dimensional quasiregular maps and single-variable complex analysis. Consequently, it has become one of the most widely studied results in quasiregular analysis. For instance, a version of Rickman's Picard Theorem has been shown for quasiregular maps $f \colon \R^n \to M$ into an oriented Riemannian $n$-manifold $M$ by Holopainen and Rickman \cite{Holopainen-Rickman_Picard, Holopainen-Rickman_Picard-noncompact}. A version of the theorem has also been shown by Rajala \cite{Rajala_MFD-Picard} in the case where $f$ is a mapping of finite distortion, i.e.\ a mapping satisfying \eqref{eq:QR_def} with a non-constant $K$.

When $n = 2$, the Sto\"ilow factorization approach yields that the constant $q(2, K)$ in Rickman's Picard theorem is equal to $2$, and is thus in fact independent on $K$. It was conjectured for some time that one could also have $q(n, K) = 2$ for all $n \geq 2$ and $K \geq 1$. However, counterexamples by Rickman \cite{Rickman_PicardCounterex} in the case $n = 3$, and by Drasin and Pankka \cite{Drasin-Pankka_Acta} in the case $n \geq 4$, show that for a fixed $n > 2$ one has $q(n, K) \to \infty$ as $K \to \infty$. 

\subsection{The theory of quasiregular values}

Various generalizations of \eqref{eq:QR_def} and \eqref{eq:QR_def_Beltrami} occur in the study of complex analysis. For instance, the condition 
\begin{equation}\label{eq:simontype}
\abs{Df}^2 \leq K J_f + C\, , 
\end{equation}
where $K\ge 1$ and $C\ge 0$ are constants, arises naturally in the theory of elliptic PDEs~\cite[Chapter 12]{Gilbarg-Trudinger_Book}. The H\"older regularity of planar domain solutions of \eqref{eq:simontype} has been shown by Nirenberg~\cite{Nirenberg_Holder}, Finn and Serrin~\cite{Finn_Serrin}, and Hartman~\cite{Hartman}. Similar ideas also play a key role in the work of Simon \cite{Simon_QRGauss}, where he obtains H\"older estimates for solutions of \eqref{eq:simontype} between surfaces, and applies them to the study of equations of mean curvature type.

The theory of quasiregular values stems from another similar generalization of \eqref{eq:QR_def} and \eqref{eq:QR_def_Beltrami}, namely
\begin{equation}\label{eq:QRval_zero_Beltrami}
	f_{\overline{z}} = \mu f_z + A f,
\end{equation}
where $\norm{\mu}_{L^\infty} < 1$ and $A \in L_{\loc}^{2+\eps} (\Omega, \C)$ for some $\eps>0$. In particular, \eqref{eq:QRval_zero_Beltrami} corresponds to the case $n=2, y_0 = 0$ of definition \eqref{eq:QRvalue_def} of quasiregular values. Much of the initial theory on solutions of \eqref{eq:QRval_zero_Beltrami} was developed by Vekua \cite{Vekua_book}. One of the standout applications for \eqref{eq:QRval_zero_Beltrami} arose when Astala and P\"aiv\"arinta used it in their solution to the planar Calder\'on problem \cite{Astala-Paivarinta}. The solutions of~\eqref{eq:QRval_zero_Beltrami} play a key part of various other uniqueness theorems as well; we refer to the book of Astala, Iwaniec and Martin~\cite{Astala-Iwaniec-Martin_Book} for details.

Astala and P\"aiv\"arinta relied on two results for entire solutions of \eqref{eq:QRval_zero_Beltrami}, which were essentially modeled on the Liouville theorem and the argument principle; see \cite[Proposition 3.3]{Astala-Paivarinta} and \cite[Sect.\ 8.5 and 18.5]{Astala-Iwaniec-Martin_Book}. The original key idea behind the planar results is that any solution $f$ of \eqref{eq:QRval_zero_Beltrami} is of the form $f = g e^{\theta}$, where $g$ is quasiregular and $\theta \colon \Omega \to \C$ is a solution of $\theta_{\overline{z}} = \mu \theta_z + A$. Since the existence theory of Beltrami equations and the aforementioned decomposition $f = g e^{\theta}$ lack higher-dimensional counterparts, this planar approach fails to generalize to the $n$-dimensional setting. Nevertheless, we have recently in \cite{Kangasniemi-Onninen_Heterogeneous, Kangasniemi-Onninen_1ptReshetnyak} managed to obtain higher-dimensional counterparts to the planar results used by Astala and P\"aiv\"arinta. The Liouville-type theorem stated in \cite[Theorem 1.3]{Kangasniemi-Onninen_Heterogeneous} in particular answers the Astala-Iwaniec-Martin uniqueness question from \cite[Sect.\ 8.5]{Astala-Iwaniec-Martin_Book}, though it bears mention that we later discovered the original proof of this specific result to have a small but fatal flaw in the part \cite[Lemma 7.2]{Kangasniemi-Onninen_Heterogeneous} involving integrability below the natural exponent, and have submitted a corrigendum \cite{Kangasniemi-Onninen_Heterogeneous_Corrigendum} which recovers the original theorem though a non-trivial fix.

The higher-dimensional results opened up an entirely new direction of study in GFT, as they led us to introduce the class of maps with quasiregular values in \cite{Kangasniemi-Onninen_1ptReshetnyak}. The term ``quasiregular value'' is partially motivated by the single-value versions of various foundational results of quasiregular maps that follow from \eqref{eq:QRvalue_def}. The other main motivation for the term is the fact that $K$-quasiregularity of a map $f \in W^{1,n}_\loc(\Omega, \R^n)$ can be fully characterized by $f$ having a $(K, \Sigma_{y})$-quasiregular value with $\Sigma_y \in L^{1+\eps}_\loc(\Omega)$ at every $y \in \R^n$; see \cite[Theorem 1.3]{Kangasniemi-Onninen_1ptReshetnyak}. 

The following theorem lists the two most notable current results of quasi\-regular values, which are the single-value versions of the Liouville theorem and Reshetnyak's theorem. They were shown in \cite{Kangasniemi-Onninen_Heterogeneous} and \cite{Kangasniemi-Onninen_1ptReshetnyak}, respectively, and are key components behind the higher-dimensional versions of the planar results for solutions of \eqref{eq:QRval_zero_Beltrami}. The addition of the Picard-type Theorem \ref{thm:Picard_for_QR_values} to this growing list of results furthers the evidence that quasiregular values have a rich theory comparable to that of quasiregular mappings.

\begin{thm}[{\cite[Theorem 1.2]{Kangasniemi-Onninen_Heterogeneous} and \cite[Theorem 1.2]{Kangasniemi-Onninen_1ptReshetnyak}}]\label{thm:single_value_results}
	Let $\Omega \subset \R^n$ be a domain, let $\eps > 0$, and let $f \in W^{1,n}_\loc(\Omega, \R^n)$ be a continuous map with a quasiregular value at $y_0 \in \R^n$, for given choices of $K \geq 1$ and $\Sigma \colon \Omega \to [0, \infty)$. Then the following results hold.
	\begin{enumerate}[label=(\roman*)]
		\item \label{item:Liouville} (Liouville theorem) If $\Omega = \R^n$, $\Sigma \in L^{1+\eps}_\loc(\R^n) \cap L^1(\R^n)$, and $f$ is bounded, then either $f \equiv y_0$ or $y_0 \notin f(\R^n)$.
		\item \label{item:Reshetnyak} (Reshetnyak's theorem) If $\Sigma \in L^{1+\eps}_\loc(\Omega)$ and if $f$ is not the constant function $f \equiv y_0$, then $f^{-1}\{y_0\}$ is discrete, the local index $i(x, f)$ is positive at every $x \in f^{-1}\{y_0\}$, and $f$ maps every neighborhood $U \subset \Omega$ of a point of $f^{-1}\{y_0\}$ to a neighborhood $f(U)$ of $y_0$.
	\end{enumerate}
\end{thm}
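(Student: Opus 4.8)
The statement to be proven is Theorem~\ref{thm:single_value_results}, which collects a Liouville-type theorem (i) and a Reshetnyak-type theorem (ii) for a continuous $f \in W^{1,n}_\loc(\Omega,\R^n)$ with a $(K,\Sigma)$-quasiregular value at $y_0$, i.e.\ satisfying \eqref{eq:QRvalue_def}. Since these results are cited as already established in \cite{Kangasniemi-Onninen_Heterogeneous} and \cite{Kangasniemi-Onninen_1ptReshetnyak}, my task here is to sketch the proof strategy that these papers employ. The unifying idea is to \emph{reduce \eqref{eq:QRvalue_def} to an honest quasiregularity statement} by testing against the map $x \mapsto f(x) - y_0$ and exploiting that the ``error term'' $\abs{f(x)-y_0}^n\Sigma(x)$ is controlled by a function in $L^{1+\eps}_\loc$, which is just enough integrability above the natural exponent $n$ to run Caccioppoli-type and capacity-type estimates.

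\textbf{Plan for (ii), Reshetnyak's theorem.} After translating so that $y_0 = 0$, the key observation is that near the zero set $f^{-1}\{0\}$ the term $\abs{f(x)}^n\Sigma(x)$ is small relative to $\abs{Df(x)}^n$, so $f$ is ``quasiregular to leading order'' there. Concretely, the plan is: (a) Establish a Caccioppoli-type inequality for $f$ on small balls, using \eqref{eq:QRvalue_def} together with $\Sigma \in L^{1+\eps}_\loc$ and H\"older's inequality to absorb the $\Sigma$-term; this gives that $\log\abs{f}$ is in a suitable Sobolev or $\mathrm{BMO}$-type class away from zeros, and that the multiplicity of $f$ over small neighborhoods of $0$ is finite. (b) Run a monotonicity/winding-number argument: show that for small $r$, the topological degree $\deg(f, B(x_0,r), 0)$ is well-defined, positive, and nonincreasing in a controlled way, which forces $f^{-1}\{0\}$ to be discrete and $i(x_0,f) > 0$ at each such point. (c) Use a degree-theoretic argument (the degree being a nonzero homotopy invariant together with the fact that $f(U)$ must cover a neighborhood of $0$ whenever the local degree at a preimage point is nonzero) to conclude that $f$ maps neighborhoods of zeros onto neighborhoods of $0$. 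The technical heart is step (a): getting the reverse-H\"older / Caccioppoli estimate to close requires exactly the exponent gap $\eps > 0$, and handling the possible sign changes of $J_f$ away from the zero set while still controlling behavior near the zero set.

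\textbf{Plan for (i), the Liouville theorem.} Here $\Omega = \R^n$, $f$ is bounded, and $\Sigma \in L^{1+\eps}_\loc(\R^n)\cap L^1(\R^n)$; we want that either $f \equiv y_0$ or $y_0 \notin f(\R^n)$. Again translate $y_0 = 0$. Suppose $0 \in f(\R^n)$ but $f \not\equiv 0$; by part (ii) the preimage $f^{-1}\{0\}$ is discrete and at each such point the local index is positive. The plan is to derive a contradiction from a global integral identity: one shows, via a capacity or modulus-of-curve-family estimate adapted to \eqref{eq:QRvalue_def}, that the number of preimages of $0$ (counted with index) in a large ball $B(0,R)$ is bounded below by a quantity that grows with $R$ — essentially because $f$ is bounded, so the image does not ``escape'', forcing accumulation of solid-angle/degree over $0$ — while at the same time the $\Sigma$-term, being globally $L^1$, contributes only a bounded amount of error to this count. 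The global $L^1$ bound on $\Sigma$ is what makes the error term summable over the dyadic annular decomposition of $\R^n$; the local $L^{1+\eps}$ bound is what keeps the estimate valid on each annulus. Combining these yields that the degree count is simultaneously unbounded and bounded, a contradiction, unless $f^{-1}\{0\} = \emptyset$, i.e.\ $0 \notin f(\R^n)$.

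\textbf{Main obstacle.} The principal difficulty, in both parts, is that \eqref{eq:QRvalue_def} gives \emph{no control whatsoever} on $Df$ away from $\{f = y_0\}$ beyond membership in $W^{1,n}_\loc$ plus the $\Sigma$-term; in particular $J_f$ may be negative on large sets, so none of the standard quasiregular machinery (Reshetnyak's original arguments, the $\mathcal{A}$-harmonicity of coordinate functions, the $\mathrm{BMO}$-estimates of Reshetnyak) applies directly. The resolution is to work with the nonnegative quantity $\abs{f - y_0}$ and its logarithm, for which \eqref{eq:QRvalue_def} does yield a useful differential inequality of $\mathcal{A}$-subharmonic or Harnack type near the level set $\{f = y_0\}$; the delicate point — and the one that, as the introduction notes, caused a flaw in \cite{Kangasniemi-Onninen_Heterogeneous} requiring a corrigendum — is pushing these estimates slightly below the natural exponent $n$, which is exactly where the hypothesis $\Sigma \in L^{1-\eps}$ (in the Picard theorem) or the careful exponent bookkeeping (in the Liouville theorem) becomes essential.
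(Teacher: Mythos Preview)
This theorem is not proved in the present paper at all: it is quoted verbatim from \cite{Kangasniemi-Onninen_Heterogeneous} and \cite{Kangasniemi-Onninen_1ptReshetnyak} as a black-box tool, with no argument supplied. There is therefore nothing in the paper to compare your proposal against. You are aware of this and have instead tried to reconstruct what the cited papers do, but that reconstruction cannot be checked from the present paper alone.

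That said, the paper does drop hints about the actual methods, and your sketch diverges from them in places. The authors explicitly flag (Section~4.2) that ``the use of sublevel and superlevel sets has been perhaps the most fundamental tool in obtaining the current results on quasiregular values'', pointing to \cite[Section~5]{Kangasniemi-Onninen_Heterogeneous} and \cite[Section~4]{Kangasniemi-Onninen_1ptReshetnyak}; Lemma~\ref{lem:superlevel_bdd_component} in this paper is a sample of that technology. Your plan for (ii) instead foregrounds $\mathcal{A}$-subharmonic/Harnack machinery, whereas the introduction (Section~1.6) stresses that in this circle of ideas one \emph{avoids} $\mathcal{A}$-harmonic theory precisely because the conformal structure $G_f$ is unavailable. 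Your plan for (i) is also somewhat garbled: you claim the preimage count over $0$ ``grows with $R$'' because $f$ is bounded, but boundedness of $f$ gives no such growth; the actual mechanism in \cite{Kangasniemi-Onninen_Heterogeneous}, as far as one can infer from Section~\ref{sect:infinite_measure} here, passes through the auxiliary map $G = (\log\abs{f}, f/\abs{f})$ into $\R\times\S^{n-1}$ and an $L^n$-integrability statement for $\abs{DG} = \abs{Df}/\abs{f}$, combined with superlevel-set control, rather than through a degree-count-versus-capacity contradiction of the shape you describe.

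In short: there is no proof in this paper to grade against, and your speculative outline of the external proofs is plausible in broad strokes (Caccioppoli estimates, degree theory, the role of the $\eps$-gap) but likely inaccurate in its specifics, particularly the reliance on $\mathcal{A}$-harmonic ideas and the growth mechanism in part (i).
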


We note that by \cite[Theorem~1.1]{Kangasniemi-Onninen_Heterogeneous}, solutions $f \in W^{1,n}_\loc(\Omega, \R^n)$ of \eqref{eq:QRvalue_def} always have a continuous representative if $\Sigma \in L^{1+\eps}_\loc(\Omega)$ for some $\eps > 0$; see also \cite{Dolezalova-Kangasniemi-Onninen_MGFD-cont} which explores how much these assumptions can be relaxed for continuity to remain true. Hence, the assumption of continuity in our results only amounts to making sure that our chosen representative of the Sobolev map is the continuous one.

\subsection{Other versions of Theorem \ref{thm:Picard_for_QR_values}}

Besides the standard formulation for quasiregular mappings $f \colon \R^n \to \R^n$, Rickman's Picard theorem is often also equivalently formulated for quasiregular mappings $f \colon \R^n \to \S^n$. In our setting, we similarly obtain a version of Theorem \ref{thm:Picard_for_QR_values} for mappings $f \colon \R^n \to \S^n$ with little extra effort, though it requires formulating a spherical version of \eqref{eq:QRvalue_def}. Given $\Omega \subset \R^n$, $K \geq 0$, $y_0 \in \S^n$, and $\Sigma \in L^{1+\eps}_\loc(\Omega, [0, \infty))$ with $\eps > 0$, we say that a continuous mapping $h \in W^{1, n}(\Omega, \S^n)$ has a \emph{$(K, \Sigma)$-quasiregular value with respect to the spherical metric at $w_0 \in \S^n$} if $f$ satisfies
\begin{equation}\label{eq:spherical_QR_value}
	\abs{Dh(x)}^n \leq K J_h(x) + \sigma^n(h(x), w_0) \Sigma(x)
\end{equation}
at a.e.\ $x \in \Omega$, where $\sigma(\cdot, \cdot)$ denotes the spherical distance on $\S^n$, and $\abs{Dh(x)}$ and $J_h(x)$ are defined using the standard Riemannian metric and orientation on $\S^n$. With this definition, the resulting version of Theorem \ref{thm:Picard_for_QR_values} is as follows.

\begin{restatable}{thm}{mainthmsphere}\label{thm:Picard_for_QR_values_spherical}
	Let $K \geq 1$ and $\Sigma \in L^{1+\eps}(\R^n) \cap L^{1-\eps}(\R^n)$ for some $\eps > 0$. Then there exists $q = q(n, K) \in \Z_{> 0}$ such that no continuous map $h \in W^{1,n}_\loc(\R^n, \S^n)$ has a $(K, \Sigma)$-quasiregular value with respect to the spherical metric at $q$ distinct points $w_1, \dots, w_{q} \in \partial h(\R^n)$.
\end{restatable}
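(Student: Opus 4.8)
The plan is to deduce Theorem~\ref{thm:Picard_for_QR_values_spherical} from its Euclidean form, Theorem~\ref{thm:Picard_for_QR_values}, by composing $h$ with a stereographic projection. A constant $h$ has $\partial h(\R^n)$ equal to a single point, so we may assume $h$ is non-constant and that $q \ge 2$. I would first record a spherical version of Reshetnyak's theorem for quasiregular values: composing \eqref{eq:spherical_QR_value} with a stereographic chart of $\S^n$ centered at a point far from $w_i$ turns it into \eqref{eq:QRvalue_def} with $\Sigma$ replaced by a locally bounded multiple of itself, so Theorem~\ref{thm:single_value_results}\ref{item:Reshetnyak} applies locally near $h^{-1}\{w_i\}$. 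Hence, if $w_i\in h(\R^n)$, then $h$ would carry a neighborhood of a point of $h^{-1}\{w_i\}$ onto a neighborhood of $w_i$, placing $w_i$ in $\intr h(\R^n)$ and contradicting $w_i\in\partial h(\R^n)$; thus $h$ omits each $w_i$.

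Next, choose a point $p\in\S^n$ omitted by $h$, let $\Phi\colon\S^n\setminus\{p\}\to\R^n$ be an orientation-preserving conformal diffeomorphism (the stereographic projection from $p$, followed by a reflection of $\R^n$ if needed), and set $f:=\Phi\circ h$. Since $h$ omits $p$ and $\Phi$ is smooth on $\S^n\setminus\{p\}$, the composition $f$ is a continuous element of $W^{1,n}_\loc(\R^n,\R^n)$; and since $h(\R^n)\subseteq\S^n\setminus\{p\}$ and $\Phi$ is a homeomorphism onto $\R^n$, it carries $\partial h(\R^n)\setminus\{p\}$ onto $\partial f(\R^n)$, so every $w_i\ne p$ yields a point $y_i:=\Phi(w_i)\in\partial f(\R^n)$, the $y_i$ being pairwise distinct. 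Conformality gives $J_f=|D\Phi(h)|^nJ_h$, $|Df|^n\le|D\Phi(h)|^n|Dh|^n$, and $|D\Phi(w)|=\tfrac{1}{2}(1+|\Phi(w)|^2)$; substituting \eqref{eq:spherical_QR_value} for $w_i$ and using that on $\S^n$ the spherical distance is comparable to the chordal distance, so that $|D\Phi(w)|\,\sigma(w,w_i)\lesssim(1+|\Phi(w)|^2)^{1/2}\,|\Phi(w)-y_i|$ with a constant depending on $y_i$, one obtains, for a.e.\ $x$ and each such $i$,
\begin{equation*}
	|Df(x)|^n\le KJ_f(x)+|f(x)-y_i|^n\,\widetilde\Sigma(x),\qquad \widetilde\Sigma:=C\,(1+|f|^2)^{n/2}\,\Sigma,
\end{equation*}
where $C$ depends only on $n$ and the $y_i$; note $\widetilde\Sigma\ge 0$ and, since $f$ is continuous, $\widetilde\Sigma\in L^{1+\eps}_\loc(\R^n)$.

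If $h(\R^n)$ is not dense in $\S^n$, the argument closes immediately: take $p\in\S^n\setminus\overline{h(\R^n)}$, so that a full spherical cap about $p$ misses $h(\R^n)$ and hence $f(\R^n)\subseteq\overline{B(0,R)}$ for some $R$. Then every $w_i\ne p$ survives and $\widetilde\Sigma\le C(1+R^2)^{n/2}\Sigma\in L^{1+\eps}(\R^n)\cap L^{1-\eps}(\R^n)$, so the existence of $q$ distinct points $y_1,\dots,y_q\in\partial f(\R^n)$ at which $f$ has $(K,\widetilde\Sigma)$-quasiregular values contradicts Theorem~\ref{thm:Picard_for_QR_values} once $q\ge q_0(n,K)$, the Euclidean constant. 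The main obstacle is the opposite case $\overline{h(\R^n)}=\S^n$: then every omitted value lies in $\partial h(\R^n)$, one is forced to take $p=w_q$, the resulting $f$ has dense --- hence unbounded --- image, and $\widetilde\Sigma$ is only known to lie in $L^{1+\eps}_\loc(\R^n)$, so Theorem~\ref{thm:Picard_for_QR_values} does not apply verbatim.

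To overcome this, the extra ingredient I would use is the spherical value of $h$ at $w_q$ itself. Pushing \eqref{eq:spherical_QR_value} for $w_q$ forward through $\Phi$ and using $|D\Phi(w)|\,\sigma(w,w_q)\lesssim(1+|\Phi(w)|^2)^{1/2}$ yields
\begin{equation*}
	|Df(x)|^n\le KJ_f(x)+\widetilde\Sigma(x)\qquad\text{for almost every }x,
\end{equation*}
which is precisely the growth control that allows the image of $w_q$ --- the point at infinity for $f$ --- to play the role of a quasiregular value of $f$ \emph{at infinity}. Thus $f$ behaves as a Euclidean map carrying quasiregular values at the finite points $y_1,\dots,y_{q-1}$ together with one at infinity, and it remains to check that the proof of Theorem~\ref{thm:Picard_for_QR_values} accommodates this configuration --- that one of its omitted values may be normalized to lie at infinity (standard in Rickman-type arguments), with $\widetilde\Sigma$ permitted to be merely locally $L^{1+\eps}$ away from that value once the growth bound above holds there. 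Granting this, Theorem~\ref{thm:Picard_for_QR_values_spherical} follows with, e.g., $q(n,K):=q_0(n,K)+1$. The cleanest implementation is presumably to run the proof of Theorem~\ref{thm:Picard_for_QR_values} directly for $\S^n$-valued maps, just as Rickman's theorem is classically established for maps $\R^n\to\S^n$; this is likely what the ``little extra effort'' above refers to.
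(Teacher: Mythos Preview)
Your reduction strategy runs in the wrong direction: in the paper, Theorem~\ref{thm:Picard_for_QR_values_spherical} is the primary result, and Theorem~\ref{thm:Picard_for_QR_values} is derived from it in two lines via Lemma~\ref{lem:QRvalue_sphere}. So deducing the spherical version from the Euclidean one would be circular. This is not merely a matter of exposition: the entire machinery of Sections~\ref{sect:caccioppoli}--\ref{sect:infinite_measure} (the Caccioppoli estimates, the pseudosupremum, Rickman's hunting lemma, and the delicate $\smallabs{A_f}(\R^n)=\infty$ argument) is carried out for maps $h\colon\R^n\to\S^n$, precisely because the pull-back measure $A_f=h^*\vol_{\S^n}$ is invariant under isometric rotations of the target and hence is the \emph{same} measure for every rotated copy $f_k$ of $h$. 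That rotational symmetry is what makes the argument go through with a single $\Sigma$; it has no Euclidean analogue.

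You correctly identify the obstruction in your own approach: when $\overline{h(\R^n)}=\S^n$, the only available pole is one of the $w_i$, the resulting $f$ is unbounded, and the transplanted weight $\widetilde\Sigma=C(1+\abs{f}^2)^{n/2}\Sigma$ fails the global $L^{1\pm\eps}$ hypothesis of Theorem~\ref{thm:Picard_for_QR_values}. Your proposed fix---treat the spherical value at $w_q$ as a ``quasiregular value at infinity'' and check that the Euclidean proof tolerates this---is not a reduction but a request to redo the proof with weaker hypotheses, and you give no argument that it succeeds. In fact this is exactly where the real work lies: the paper spends all of Section~\ref{sect:infinite_measure} showing that the case $\smallabs{A_f}(\R^n)<\infty$ cannot occur under $\Sigma\in L^{1+\eps}\cap L^{1-\eps}$, via a chain of lemmas (two-cases Lemma~\ref{lem:two_cases_lemma}, higher integrability Lemma~\ref{lem:global_higher_int}, the logarithmic lower-integrability Lemma~\ref{lem:pseudo_lower_integrability}, and the boundedness Lemma~\ref{lem:boundedness_lemma}) that have no counterpart in your outline. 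Your closing sentence---that one should simply run the proof for $\S^n$-valued maps---is correct, but it is the whole content of the theorem, not a shortcut around it.
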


We remark that if we identify $\S^n$ with $\R^n \cup \{\infty\}$ via the stereographic projection, then a map $f \colon \R^n \to \R^n$ has a quasiregular value with respect to the Euclidean metric at $y_0 \in \R^n$ if and only if $f$ has a quasiregular value with respect to the spherical metric at both $y_0$ and $\infty$. Hence, \eqref{eq:spherical_QR_value} is in some sense a weaker assumption than \eqref{eq:QRvalue_def}. The comparison between these two definitions is discussed in greater detail in Section \ref{sect:qrvalues_into_Sn}.

While the assumption $\Sigma \in L^{1+\eps}(\R^n) \cap L^{1-\eps}(\R^n)$ in Theorems \ref{thm:Picard_for_QR_values} and \ref{thm:Picard_for_QR_values_spherical} is sharp, the proof we use does yield us some additional information even under a weaker assumption of $\Sigma \in L^{1}(\R^n) \cap L^{1+\eps}_\loc(\R^n)$. This result is more elegantly stated using spherical quasiregular values.

\begin{restatable}{thm}{mainthmBCpart}\label{thm:Bonk-PoggiCorradini-part}
	Let $K \geq 1$ and $\Sigma \in L^{1}(\R^n) \cap L^{1+\eps}_\loc(\R^n)$ for some $\eps > 0$. Then there exists $q = q(n, K) \in \Z_{> 0}$ as follows: if a continuous map $h \in W^{1,n}_\loc(\R^n, \S^n)$ has a $(K, \Sigma)$-quasiregular value with respect to the spherical metric at $q$ distinct points $w_1, \dots, w_{q} \in \partial h(\R^n)$, then $h \in W^{1,n}(\R^n, \S^n)$.
\end{restatable}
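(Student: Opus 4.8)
The plan is to reduce the statement to a single uniform bound, namely $\sup_{R>0}\int_{B(0,R)}\abs{Dh}^n<\infty$. Since $\S^n$ is compact and $h$ is continuous, this is equivalent to $h\in W^{1,n}(\R^n,\S^n)$, so it suffices to produce an $n$-energy estimate on the balls $B(0,R)$ with a constant that does not depend on $R$.

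For the setup, fix $\delta>0$ small enough that the spherical balls $B_\sigma(w_1,2\delta),\dots,B_\sigma(w_q,2\delta)$ are pairwise disjoint. Since each $w_i$ lies in $\partial h(\R^n)$, the open sets $V_i:=h^{-1}(B_\sigma(w_i,\delta))$ are nonempty and pairwise disjoint, and on $V_i$ the inequality \eqref{eq:spherical_QR_value} improves to $\abs{Dh}^n\le KJ_h+\delta^n\Sigma$. Composing $h$ with a fixed (conformal, bi-Lipschitz on the relevant cap) stereographic chart centered at $w_i$ shows that on each component of $V_i$ the map $h$ has a genuine quasiregular value in the sense of \eqref{eq:QRvalue_def}, with $K$ unchanged and $\Sigma$ replaced by a bounded multiple of itself; as the $w_i$ are distinct, $h$ is non-constant, so Theorem \ref{thm:single_value_results}\ref{item:Reshetnyak} applies and shows that $h$ is open and discrete on every $V_i$, with $h^{-1}\{w_i\}$ discrete and of positive local index. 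This is the local regularity needed to carry the value-distribution estimates through.

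The heart of the argument is to show that, once $q=q(n,K)$ is taken large enough, the presence of $q$ ``nearly omitted'' values forces the uniform bound
\begin{equation*}
	\int_{B(0,R)}\abs{Dh}^n \;\le\; C(n,K)\bigl(1+\norm{\Sigma}_{L^1(\R^n)}\bigr)\qquad\text{for all } R>0 .
\end{equation*}
I would establish this by transplanting the value-distribution machinery behind Rickman's Picard theorem, in the streamlined form of Bonk and Poggi-Corradini, to the quasiregular-value setting, where it should provide a finite-energy dichotomy: a solution of \eqref{eq:spherical_QR_value} either has infinite energy, in which case it must grow so fast that it cannot asymptotically miss $q$ points, or else it satisfies the displayed bound. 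Two features make the transplant feasible. First, the error term is globally harmless: since $\sigma(\cdot,\cdot)\le\pi$ on $\S^n$, one has $\int_{B(0,R)}\sigma^n(h,w_i)\Sigma\le\pi^n\norm{\Sigma}_{L^1(\R^n)}$ for every $R$, which is precisely why $\Sigma\in L^1(\R^n)$ — rather than merely $\Sigma\in L^1_\loc(\R^n)$ — is assumed here, and why no control below the natural exponent is needed for this weaker conclusion. Second, on $\R^n\setminus\bigcup_i\overline{V_i}$ the map $h$ stays at spherical distance at least $\delta$ from every $w_i$, hence maps into the fixed $q$-times punctured domain $\S^n\setminus\bigcup_i B_\sigma(w_i,\delta)$; this is the topological input that drives the Rickman-type modulus and counting estimates — equivalently, the Harnack inequalities for the $\cA$-harmonic-type potentials $x\mapsto\log(\delta/\sigma(h(x),w_i))$ on $V_i$, which now carry a lower-order term controlled by $\Sigma$ — and yields a bound on the growth of the averaged counting (spherical area) function of $h$ that, after a dyadic decomposition of $\R^n$ into annuli $B(0,2^{k+1})\setminus B(0,2^k)$, sums to the estimate. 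Letting $R\to\infty$ then gives $\int_{\R^n}\abs{Dh}^n<\infty$.

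The main obstacle is the proof of this uniform estimate. One must run the quasiregular-value analogue of Rickman's defect relation, or of the Bonk--Poggi-Corradini finite-energy argument, in a situation where $J_h$ may vanish or change sign away from the points $w_i$ and where $h$ is quasiregular only in the asymptotic sense \eqref{eq:spherical_QR_value} as $h$ approaches each $w_i$. Accordingly, every modulus, capacity, or $\cA$-harmonic estimate invoked along the way has to be upgraded to absorb a lower-order term bounded by a multiple of $\Sigma$, and one has to verify that the errors accumulated over the dyadic annuli remain bounded by $C\norm{\Sigma}_{L^1(\R^n)}$ rather than diverging. Theorem \ref{thm:single_value_results}\ref{item:Reshetnyak}, together with the continuity of solutions of \eqref{eq:spherical_QR_value}, is what keeps the geometry of the sets $V_i$ — and in particular the counting functions attached to the values $w_i$ — under control throughout.
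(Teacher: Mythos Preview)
Your proposal identifies the correct overall target --- show that if $\int_{\R^n}\abs{Dh}^n=\infty$ then $q$ is bounded in terms of $n$ and $K$ --- but the route you sketch has a genuine gap at its core. You propose to run ``Harnack inequalities for the $\cA$-harmonic-type potentials'' with a lower-order term. The paper explicitly explains why this cannot work here: solutions of \eqref{eq:spherical_QR_value} can have $J_h=0$ or $J_h<0$ on large sets, so there is no induced conformal structure $G_f(x)=J_f^{2/n}(x)[D^Tf(x)Df(x)]^{-1}$ and hence no $\cA$-harmonic theory to upgrade. The paper's proof eliminates $\cA$-subharmonic methods entirely and replaces them by direct Caccioppoli-type estimates (Lemma~\ref{lem:sublevel_Caccioppoli}, Corollary~\ref{cor:estimate_L_by_Af}, Lemma~\ref{lem:estimate_Af_by_L}) for the pulled-back logarithmic singularity function $u=v\circ f$, using only \eqref{eq:QRvalue_at_infty_eucl} and the explicit formula $\Delta_n v=J_{s_n}$.

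There is a second, more structural omission. In the quasiregular case one uses that every superlevel set $\{u>t\}$ has only unbounded components; this is false for quasiregular values, and your proposal does not address it. The paper's solution is twofold: Lemma~\ref{lem:superlevel_bdd_component} shows that bounded components contribute an error controlled by $\Sigma(\R^n)$, and a new ``pseudosupremum'' $\pseudosup_E u$ (defined via unbounded components only) is introduced so that the two Caccioppoli estimates can be chained as in the quasiregular argument. Your dyadic-annuli decomposition of $\R^n$ is also not what is used; the paper applies Rickman's Hunting Lemma to the signed measure $\abs{A_f}$ to extract a sequence of balls $B_j$ with $\abs{A_f}(B_j)\to\infty$ and controlled doubling $\abs{A_f}(8B_j)\lesssim_n\abs{A_f}(B_j)$, then combines the Caccioppoli inequalities with a capacity lower bound (Lemma~\ref{lem:capacity_lower_bound_abstract}) on each $B_j$ and lets $j\to\infty$ so that the $\Sigma$-error terms drop out. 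Without these two ingredients --- the pseudosupremum and the hunting-lemma scale selection --- the ``transplant'' you describe does not go through.
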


\subsection{The planar case} In the case $n = 2$, similarly to the standard Picard theorem, our main results end up having $q(2, K) = 2$ for maps $f \colon \C \to \C$, and $q(2, K) = 3$ for maps $h \colon \C \to \S^2$. Even this planar version of Theorem \ref{thm:Picard_for_QR_values} and Theorem \ref{thm:Picard_for_QR_values_spherical} is new.

\begin{restatable}{thm}{mainthmplanar}\label{thm:q_is_2_when_n_is_2}
	Let $K \geq 1$ and $\Sigma \in L^{1+\eps}(\C) \cap L^{1-\eps}(\C)$ for some $\eps > 0$. Then no continuous map $f \in W^{1,2}_\loc(\C, \C)$ has a $(K, \Sigma)$-quasiregular value at two distinct points $z_1, z_2 \in \partial f(\C)$. Similarly, no continuous map $h \in W^{1,2}_\loc(\C, \S^2)$ has a $(K, \Sigma)$-quasiregular value with respect to the spherical metric at three distinct points $w_1, w_2, w_3 \in \partial h(\C)$. 
\end{restatable}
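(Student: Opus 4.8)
The plan is to deduce Theorem~\ref{thm:q_is_2_when_n_is_2} from the already-proved general results, exploiting the two-dimensional structure to pin down the exact constant. The starting observation is that Theorems~\ref{thm:Picard_for_QR_values} and~\ref{thm:Picard_for_QR_values_spherical} already give \emph{some} finite $q(2,K)$; the content here is that $q(2,K) = 2$ for maps into $\C$ and $q(2,K) = 3$ for maps into $\S^2$. For the upper bound ($q \leq 2$, resp.\ $q \leq 3$) one must exhibit the obstruction directly: suppose for contradiction that a continuous $f \in W^{1,2}_\loc(\C,\C)$ has a $(K,\Sigma)$-quasiregular value at two distinct points $z_1, z_2 \in \partial f(\C)$. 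First I would reduce to a single normalized omitted-boundary-value configuration by an affine change of coordinates, sending $z_1 \mapsto 0$ and $z_2 \mapsto \infty$ on the sphere; affine maps preserve \eqref{eq:QRvalue_def} up to adjusting $\Sigma$ by a constant factor, which does not affect the hypotheses $\Sigma \in L^{1+\eps} \cap L^{1-\eps}$. The lower bound (that $q = 1$ genuinely fails, i.e.\ there exist such $f$ omitting one boundary value, and $q=2$ fails in the spherical setting) is handled by explicit examples, e.g.\ perturbations of holomorphic maps with logarithmic singularities absorbed into the $\Sigma$ term; these will appear in the counterexamples section.

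The core of the upper-bound argument is a planar Stoïlow-type factorization for maps with quasiregular values. In the genuinely quasiregular planar case, the bound $q=2$ comes from writing $f = h \circ g$ with $g$ quasiconformal and $h$ holomorphic, and then applying the classical Picard theorem to $h$. For quasiregular \emph{values}, the analogous decomposition is the one alluded to in the introduction: near the value $y_0$, a solution of the two-dimensional version $f_{\bar z} = \mu f_z + A f$ of \eqref{eq:QRvalue_def} factors as $f = g e^{\theta}$ where $g$ is quasiregular and $\theta$ solves $\theta_{\bar z} = \mu \theta_z + A$ with $A \in L^{2+\eps}_\loc$. The plan is to invoke this factorization on $\C \setminus f^{-1}\{z_1, z_2\}$ — but one must be careful, since $f$ need only satisfy the quasiregular-value inequality for a \emph{fixed} value, and here we have two values $z_1$ and $z_2$. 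I would instead argue locally: away from the value $y_0$, the generalized Stoïlow theory still produces, on simply connected subdomains avoiding $f^{-1}\{y_0\}$, a representation $f - y_0 = g\, e^{\theta}$; the exponential factor $e^\theta$ is a nonvanishing continuous function, so $f - y_0$ and $g$ have the same zero set, the same winding, and the same topological degree data. This transfers the discreteness and positive-index conclusions of Theorem~\ref{thm:single_value_results}\ref{item:Reshetnyak} from $g$ to $f$, and more importantly lets one run the classical Picard/Stoïlow argument on $g$.

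Concretely, the steps in order are: \textbf{(1)} Normalize so that the two boundary values are $0$ and $\infty$ on $\S^2$; equivalently, in the Euclidean formulation, $z_1 = 0 \in \partial f(\C)$ is a quasiregular value and, after a spherical reformulation via Section~\ref{sect:qrvalues_into_Sn}, $\infty$ is as well, reducing to the spherical statement. \textbf{(2)} Since $0 \in \partial f(\C)$, there is a sequence $x_k$ with $f(x_k) \to 0$; combine this with the fact that $0$ is a quasiregular value to control $f$ near its preimages of small values. \textbf{(3)} Apply the two-dimensional Stoïlow-type factorization: locally $f = g_0 e^{\theta_0}$ near preimages of $0$ and $f = g_\infty e^{\theta_\infty}$ (in the reciprocal chart) near preimages of $\infty$; glue using that the quasiconformal factors of the $g$'s can be taken to come from a single global Beltrami coefficient $\mu$ with $\|\mu\|_\infty \leq (K-1)/(K+1)$, so that $f = h \circ \Phi$ with $\Phi \colon \C \to \C$ the normalized principal solution of $\Phi_{\bar z} = \mu \Phi_z$ (a global quasiconformal homeomorphism) and $h \colon \C \to \S^2$ meromorphic. \textbf{(4)} Since $z_1, z_2 \in \partial f(\C) = \partial h(\C)$ (as $\Phi$ is a homeomorphism), the meromorphic function $h$ has $0$ and $\infty$ in $\partial h(\C)$; by the open mapping theorem, a meromorphic function either takes a value or omits it, and a boundary value of the image of a nonconstant meromorphic map $\C \to \S^2$ must be an omitted value — so $h$ omits both $0$ and $\infty$, contradicting the classical Picard theorem unless $h$, hence $f$, is constant. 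The spherical case $q(2,K) = 3$ is identical but with three omitted values of the meromorphic $h$, which again forces $h$ constant by Picard.

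The main obstacle I anticipate is \textbf{step (3)}: the passage from the \emph{a priori local, near-the-value} decomposition $f = g e^{\theta}$ to a \emph{global} factorization $f = h \circ \Phi$ through a single quasiconformal homeomorphism. The subtlety is that the defining inequality \eqref{eq:QRvalue_def} gives no distortion control away from $y_0$, so there is no global Beltrami coefficient to start from, and the factor $\theta$ is only defined on simply connected pieces avoiding $f^{-1}\{y_0\}$. Reconciling this with the presence of a \emph{second} distinguished value whose preimages are interleaved requires showing that the ``quasiregular part'' $g$ extends across $f^{-1}\{y_0\}$ and matches up across overlaps — essentially a monodromy/removability argument using that $e^{\theta}$ never vanishes and that $\theta$-differences between charts are locally $\log$ of a nonvanishing holomorphic-in-the-$\Phi$-coordinate function. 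An alternative that sidesteps the gluing is to use the single-value results of Theorem~\ref{thm:single_value_results} as black boxes together with a degree/valence argument à la the proof of Theorem~\ref{thm:Picard_for_QR_values} specialized to $n=2$, where the defect relation's constant is explicitly computable and collapses to the classical bounds $2$ and $3$; if the general proof already yields $q(n,K)$ via a defect inequality, I would simply substitute $n = 2$ and verify the arithmetic gives $q = 2$ (resp.\ $3$), which is likely the cleaner route and is probably how the authors proceed.
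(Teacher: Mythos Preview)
Your plan has two genuine gaps, and neither route reaches the paper's argument.

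\textbf{The global Sto\"ilow route fails for the reason you already suspect.} You hope to write $f = h \circ \Phi$ with $\Phi$ quasiconformal and $h$ meromorphic, then invoke classical Picard on $h$. But the quasiregular-value inequality at $z_1$ rewrites as $f_{\bar z} = \mu f_z + A(f - z_1)$ with $\abs{\mu} \leq k < 1$ and $A \in L^{2+2\eps} \cap L^{2-2\eps}$; the resulting $\mu$ depends on $f$, and there is no reason the $\mu$ coming from the value $z_1$ agrees with the one coming from $z_2$, nor that either extends to a global Beltrami coefficient for $f$ itself. The decomposition $f - z_1 = g\,e^\theta$ of Lemma~\ref{lem:planar_QRval_structure} \emph{is} already global on $\C$ (not merely local near $f^{-1}\{z_1\}$ as you worry), but $g$ is only quasiregular, and the factor $e^\theta$ obstructs passing to $f = h \circ \Phi$ with $h$ holomorphic. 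Your proposed gluing/monodromy argument has nothing to glue: there is one global $g$ and one global $\theta$, but you cannot strip off $e^\theta$ conformally.

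\textbf{The ``substitute $n=2$ and check arithmetic'' fallback does not give $q=2$.} The bound produced in the proof of Theorem~\ref{thm:Bonk-PoggiCorradini-part} is $q \leq C(n) K^{n(n-1)}$, which for $n=2$ still grows with $K$. No sharpening of the constants in that argument yields a $K$-independent bound.

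\textbf{What the paper actually does.} Normalize to $z_1 = 0$, $z_2 = 1$ (both finite --- your normalization $z_2 = \infty$ destroys the Euclidean setup). Use the global decomposition $f = g\,e^\theta$ with $g$ entire quasiregular and $\theta \to 0$ at infinity. Since $f$ omits $0$, so does $g$; lift $g = e^\gamma$ with $\gamma$ entire quasiregular, so $f = e^{\gamma + \theta}$. Now the key step: since $f$ omits $1$, the map $\gamma + \theta$ omits \emph{every} $2\pi i k$, $k \in \Z$, and a direct computation shows each $2\pi i k$ is a $(K, 4\Sigma)$-quasiregular value of $\gamma + \theta$. Apply Theorem~\ref{thm:Picard_for_QR_values} to $\gamma + \theta$: at most $q(2,K)$ of the points $2\pi i k$ lie in $\partial[(\gamma+\theta)(\C)]$. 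Pick a $k_0$ not among these and with $2\pi i k_0 \in \gamma(\C)$ (possible since $\gamma$ omits at most one value); then $2\pi i k_0$ is bounded away from $(\gamma+\theta)(\C)$, and comparing local index sums of $\gamma$ and $\gamma + \theta$ at $2\pi i k_0$ via Lemma~\ref{lem:local_index_counting} gives $0 > 0$. The exponential lift --- turning two omitted values into infinitely many quasiregular values --- is the idea you are missing.
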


We prove Theorem \ref{thm:q_is_2_when_n_is_2} by reducing it to Theorem \ref{thm:Picard_for_QR_values}. The version of the argument for quasiregular maps is incredibly simple: If $f \colon \C \to \C$ is a $K$-quasiregular map omitting two distinct points $z_1, z_2 \in \C$, then the lift $\gamma \colon \C \to \C$ of $f$ in the exponential map $z \mapsto z_1 + e^{z}$ is a $K$-quasiregular map that omits the infinitely many values of $\log(z_2 - z_1)$, which is impossible by Rickman's Picard Theorem. Attempting the same idea for maps with quasiregular values using Theorem \ref{thm:Picard_for_QR_values} is less straightforward, but we are ultimately able to construct a proof around this fundamental idea through use of the decomposition $f = g e^{\theta}$ and existing results on quasiregular values; see Section~\ref{sect:planar} for details.

\subsection{Main ideas of the proof}

While the classical Picard theorem has numerous proofs, only a few of them have been successfully generalized to a proof of the $n$-dimensional Rickman's Picard Theorem. The original proof by Rickman \cite{Rickman_Picard} uses path lifting and conformal modulus techniques in order to estimate spherical averages of the multiplicity function of $f$. Later, work by Eremenko and Lewis \cite{Eremenko-Lewis_Rickman-Picard, Lewis_Picard-with-Harnack} resulted in an alternate proof using Harnack inequalities of $\cA$-harmonic maps. Both of these approaches run into significant obstacles in our setting, as solutions of \eqref{eq:QRvalue_def} currently lack counterparts to e.g.\ conformal modulus estimates and the natural conformal structure $G_f(x) = J_f^{2/n}(x) [D^Tf(x) Df(x)]^{-1}$ of $f$.

Recently, however, a third method of proof has been discovered by Bonk and Poggi-Corradini \cite{Bonk-PoggiCorradini_Rickman-Picard}, which is closer to being applicable in our situation. Motivated by the Ahlfors-Shimizu -variant of value distribution theory, they study the pull-back under $f$ of a subharmonic logarithmic singularity function $v \colon \S^n\setminus\{x_0\} \to [0, \infty)$ such that the spherical $n$-Laplacian of $v$ is identically $1$. They are then able to leverage the preservation of the spherical measure under isometric rotations of $\S^n$ to obtain growth rate estimates for the measure $A_f = f^* \vol_{\S^n}$, from which the result follows via ideas reminiscent of the ones used in Rickman's original argument. 

We prove Theorem \ref{thm:Picard_for_QR_values} by adopting the structure of the proof of Bonk and Poggi-Corradini, but with key developments to the proof in multiple places where its current form is insufficient for us. Notably, in order to avoid use of the conformal structure $G_f$, we completely eliminate the use of $\cA$-subharmonic theory in our proof, and we instead obtain the required growth estimates by directly using \eqref{eq:QRvalue_def} and the properties of the function $v$. Issues caused by the extra term in \eqref{eq:QRvalue_def} and the fact that $A_f$ is a signed measure are eliminated by the global $L^1$-integrability of $\Sigma$.

The greatest challenges in our setting are tied to replacing the use of \cite[Lemma 4.4]{Bonk-PoggiCorradini_Rickman-Picard}, which yields that if $f \colon \R^n \to \R^n$ is a non-constant entire quasiregular map and $r > 0$, then every component of the set $\{\abs{f} > r\}$ is unbounded. In our case, this is not true; instead, we essentially obtain control on the total $A_f$-measure of any bounded components of $\{\abs{f} > r\}$. One of our primary tools in addressing this problem is to introduce a ``pseudosupremum'' based on unbounded components of pre-images. Indeed, when the growth estimates for $A_f$ are formulated in terms of this pseudosupremum, they can be combined in a similar manner as in the case of quasiregular maps.

However, the pseudosupremum does not solve the second major challenge surrounding \cite[Lemma 4.4]{Bonk-PoggiCorradini_Rickman-Picard}, which is the problem of showing that mappings with multiple quasiregular values in $\partial f(\R^n)$ satisfy $A_f(\R^n) = \infty$. We note that Theorem \ref{thm:Bonk-PoggiCorradini-part} is obtained by essentially ignoring this issue and instead assuming a-priori that $A_f(\R^n) = \infty$. For non-constant quasiregular maps $f \colon \R^n \to \S^n \setminus \{x_1, x_2\}$, the fact that $A_f(\R^n) = \infty$ follows easily; see for example \cite[Lemma~IV.2.7]{Rickman_book} or \cite[p.~631]{Bonk-PoggiCorradini_Rickman-Picard}. In our setting, however, this step becomes nontrivial, involving challenges somewhat similar to the ones encountered in the study of the Astala-Iwaniec-Martin uniqueness question. In particular, the part about excluding the case $A_f(\R^n) < \infty$ is the only part of the proof where the precise integrability assumptions of Theorems \ref{thm:Picard_for_QR_values} and \ref{thm:Picard_for_QR_values_spherical} become relevant.

\subsection{The structure of this paper}

In Section \ref{sect:sobolev_prelims}, we recall some preliminary information on Sobolev differential forms that is used in our computations. Section \ref{sect:qrvalues_into_Sn} is a discussion on the connections between the Euclidean and spherical definitions of quasiregular values. In Section \ref{sect:caccioppoli}, we prove the relevant Caccioppoli-type estimates that are used in the main proof.

With these preliminaries complete, we then prove Theorem \ref{thm:Bonk-PoggiCorradini-part} in Section~\ref{sect:main_proof}. The proof of Theorems \ref{thm:Picard_for_QR_values} and \ref{thm:Picard_for_QR_values_spherical} is then at last completed in Section \ref{sect:infinite_measure}, with the entire section dedicated to dealing with the special case where $A_f(\R^n) < \infty$. In Section \ref{sect:planar}, we prove the sharp planar result given in Theorem \ref{thm:q_is_2_when_n_is_2} by using Theorem \ref{thm:Picard_for_QR_values}. Finally, in Section \ref{sect:counterexamples}, we provide counterexamples which show the sharpness of the assumptions of Theorem \ref{thm:Picard_for_QR_values}.

\subsection{Acknowledgments}

We thank Pekka Pankka for several helpful comments and insights on the paper.

\section{Preliminaries on Sobolev differential forms}\label{sect:sobolev_prelims}

Throughout this paper, we use $C(a_1, a_2, \dots, a_m)$ to denote a positive constant that depends on the parameters $a_i$. The value of $C(a_1, a_2, \dots, a_m)$ may change in each estimate even if the parameters remain the same. We also use the shorthand $A_1 \lesssim_{a_1, a_2, \dots, a_m} A_2$ which stands for $A_1 \leq C(a_1, a_2, \dots, a_m) A_2$, where we always list the dependencies of the constant on the $\lesssim$-symbol. The shorthand $A_1 \gtrsim_{a_1, a_2, \dots, a_m} A_2$ is defined similarly. Additionally, if $B = \B^n(x, r) \subset \R^n$ is a Euclidean ball and $c \in (0, \infty)$, then we use $cB$ to denote the ball $\B^n(x, cr)$.

\vspace{\baselineskip}

Let $U$ be an open subset of $\R^n$. We use $L^p(\wedge^k T^*U)$, $L^p_\loc(\wedge^k T^*U)$, $W^{1,p}(\wedge^k T^*U)$, $W^{1,p}_\loc(\wedge^k T^* U)$, and $C^l(\wedge^k T^* U)$ to denote differential $k$-forms $\omega$ on $U$ such that the coefficients of $\omega$ with respect to the standard basis $\{dx_{i_1} \wedge \dots \wedge dx_{i_k} \mid i_1 < i_2 < \dots < i_k\}$ of $\wedge^k T^*\R^n$ are in $L^p(U)$, $L^p_\loc(U)$, $W^{1,p}(U)$, $W^{1,p}_\loc(U)$, or $C^l(U)$, respectively. We also use the sub-index $0$ to denote spaces of differential forms or real-valued functions with compact supports; for instance, $C^\infty_0(U)$ denotes the space of compactly supported smooth real-valued functions on $U$. 

Given a differential form $\omega \colon U \to \wedge^k T^*\R^n$, we use $\omega_x \in \wedge^k T^*_x \R^n$ to denote the value of $\omega$ at $x$. We use $\abs{\omega_x}$ for the norm of $\omega_x$, which is the $l^2$-norm on the coefficients of $\omega_x$ with respect to the standard basis; in particular $\abs{\omega}$ is a function $U \to [0, \infty)$. Recall that $\abs{\omega_1 \wedge \omega_2} \leq C(n) \abs{\omega_1} \abs{\omega_2}$, and if either $\omega_1$ or $\omega_2$ is a simple wedge product of $1$-forms, then one in fact has $\abs{\omega_1 \wedge \omega_2} \leq \abs{\omega_1} \abs{\omega_2}$. We also use $\hodge\omega$ to denote the Hodge star of a differential $k$-form $\omega$.

If $\omega \in L^1_\loc(\wedge^k T^* U)$, then $d\omega \in L^1_\loc(\wedge^{k+1} T^* U)$ is a \emph{weak differential} of $\omega$ if
\[
	\int_U d\omega \wedge \eta = (-1)^{k+1} \int_U \omega \wedge d\eta
\]
for every $\eta \in C^\infty_0(\wedge^{n-k-1} T^* U)$. We denote the space of $\omega \in L^p_\loc(\wedge^k T^* U)$ with a weak differential $d\omega \in L^q_\loc(\wedge^{k+1} T^* U)$ by $W^{d, p, q}_\loc(\wedge^k T^* U)$, with the abbreviation $W^{d, p}_\loc(\wedge^k T^* U) = W^{d, p, p}_\loc(\wedge^k T^* U)$. We also define versions with global integrability, denoted $W^{d, p, q}(\wedge^k T^* U)$ and $W^{d, p}(\wedge^k T^* U)$. Recall that $W^{1, p}_\loc(\wedge^k T^* U) \subset W^{d, p}_\loc(\wedge^k T^* U)$ and $W^{1,p}(\wedge^k T^* U) \subset W^{d, p}(\wedge^k T^* U)$, where the weak differential of an element of $W^{1, p}_\loc(\wedge^k T^* U)$ is obtained component-wise by the rule $d (f dx_{i_1} \wedge dx_{i_2} \wedge \dots \wedge dx_{i_k}) = df \wedge dx_{i_1} \wedge dx_{i_2} \wedge \dots \wedge dx_{i_k}$.

If $\omega_1 \in W^{1,p}_\loc(\wedge^k T^* U)$ and $\omega_2 \in W^{1,q}_\loc(\wedge^l T^* U)$ with $p^{-1} + q^{-1} = r^{-1} \leq 1$, then standard product rules of Sobolev functions yield that $\omega_1 \wedge \omega_2 \in W^{1, r}_\loc(\wedge^{k+l} T^* U)$, and
\begin{equation}\label{eq:leibniz_rule}
	d(\omega_1 \wedge \omega_2) = d\omega_1 \wedge \omega_2 + (-1)^k \omega_1 \wedge d\omega_2.
\end{equation} 
By using a convolution approximation argument, it can be shown that \eqref{eq:leibniz_rule} also holds if one instead assumes that $\omega_1 \in W^{d,p_1, q_1}_\loc(\wedge^k T^* U)$ and $\omega_2 \in W^{d,p_2, q_2}_\loc(\wedge^l T^* U)$ with $p_1^{-1} + p_2^{-1} = r^{-1} \leq 1$ and $\max(p_1^{-1} + q_2^{-1}, p_2^{-1} + q_1^{-1}) = s^{-1} \leq 1$, in which case $\omega_1 \wedge \omega_2 \in W^{d, r, s}_\loc(\wedge^{k+l} T^* U)$. Moreover, if $\omega \in W^{d, 1}_0(\wedge^{n-1} T^* U)$, then a convolution-based argument similarly yields
\begin{equation}\label{eq:d_integral_is_zero}
	\int_U d\omega = 0.
\end{equation}
We also note that if $\omega \in W^{d,1}_\loc(\wedge^k T^* U)$, then $d\omega \in W^{d, 1}_\loc(\wedge^{k+1} T^* U)$ with $dd\omega = 0$.

If $\omega \in C(\wedge^k T^* V)$, i.e.\ if the coefficients of $\omega$ are continuous, and if $f \in W^{1,n}_\loc(U, \R^n)$, then the pull-back $f^* \omega$ is well-defined and lies in $L^{n/k}_\loc(\wedge^k T^*U)$. We recall that in this case, we have the estimate
\begin{equation}\label{eq:pullback_norm_estimate}
	\abs{f^* \omega} \leq (\abs{\omega} \circ f) \abs{Df}^k.
\end{equation}
Indeed, if $\omega = \phi dx_{i_1} \wedge \dots \wedge dx_{i_k}$, then $\abs{f^* \omega} = \abs{(\phi \circ f) df_{i_1} \wedge \dots \wedge df_{i_k}} \leq (\abs{\phi} \circ f) \abs{Df}^k$, and the result for general $\omega$ then follows by Pythagoras.
 
Moreover, if instead $\omega \in C^1_0(\wedge^k T^* V)$, then it follows from the chain rule of $C^1_0$-functions and $W^{1,n}_\loc$-functions that $f^* \omega \in W^{d, n/k, n/(k+1)}_\loc(\wedge^k T^*U)$ and $d f^* \omega = f^* d \omega$; see e.g.\ the proof of \cite[Lemma 2.2]{Kangasniemi-Onninen_Heterogeneous}. We additionally note that if $f$ is also continuous, then the assumption $\omega \in C^1_0(\wedge^k T^* V)$ can be weakened to $\omega \in C^1(\wedge^k T^* V)$ by using smooth cutoff functions.

Generalizing the rule $d f^* \omega = f^* d \omega$ any further than this requires care, as  if $d\omega$ is not continuous, merely defining the form $f^* d \omega$ faces the challenge that the pull-back of a measurable form under a map $f \in C(U, V) \cap W^{1,n}_\loc(U, \R^n)$ might not even be well-defined. Regardless, the assumptions can be weakened to $\omega$ having locally Lipschitz coefficients, if one is careful in defining $f^* d\omega$. For us, it is enough to have the following fact about the existence of $d f^* \omega$, which follows in a straightforward manner from the chain rule for Lipschitz and Sobolev maps; see e.g.\ Ambrosio and Dal Maso \cite[Corollary 3.2]{Ambrosio-DalMaso}.

\begin{lemma}\label{lem:lip_form_pullback}
	Let $U , V \subset \R^n$ be open sets, let $f \in C(U, V) \cap W^{1,n}_\loc(U, \R^n)$, and let $\omega \in C(\wedge^k T^* V) \cap W^{1,\infty}_\loc(\wedge^k T^* V)$ for $k \in \{0, \dots, n-1\}$; i.e., we assume that $\omega$ has locally Lipschitz coefficients. Then $f^*\omega \in W^{d, n/k, n/(k+1)}_\loc(\wedge^k T^* U)$.
\end{lemma}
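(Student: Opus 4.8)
The plan is to reduce the statement to the chain rule for compositions of Lipschitz maps with Sobolev maps, as proved by Ambrosio and Dal Maso. First I would write $\omega$ in coordinates as $\omega = \sum_{I} \omega_I \, dx_I$, where the sum runs over multi-indices $I = (i_1 < \dots < i_k)$ and each coefficient $\omega_I \in C(V) \cap W^{1,\infty}_\loc(V)$; by linearity of both the pull-back and the weak differential, and by the Leibniz-type product rules recalled above, it suffices to treat a single summand $\phi \, dx_I$ with $\phi$ continuous and locally Lipschitz. For such a summand we have $f^*(\phi \, dx_I) = (\phi \circ f) \, df_{i_1} \wedge \dots \wedge df_{i_k}$, so the task splits into two parts: showing that $\phi \circ f \in W^{1, n}_\loc(U) \cap C(U)$, and then invoking the product and pull-back rules already established in the excerpt for $C^1$ forms together with the density of the wedge $df_{i_1} \wedge \dots \wedge df_{i_k}$ in $W^{d, n/k, \infty}_\loc$.

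The core step is the chain rule: since $f \in W^{1,n}_\loc(U, \R^n) \cap C(U, V)$ and $\phi$ is locally Lipschitz, the composition $\phi \circ f$ is continuous and lies in $W^{1,n}_\loc(U)$ with the a.e.\ identity $D(\phi \circ f)(x) = D\phi(f(x)) Df(x)$, where $D\phi$ is the (a.e.-defined, locally bounded) weak gradient of $\phi$; this is exactly the content of \cite[Corollary 3.2]{Ambrosio-DalMaso}, and the local boundedness of $D\phi$ gives $\abs{D(\phi \circ f)} \leq (\esssup_{\text{loc}} \abs{D\phi}) \abs{Df} \in L^n_\loc$. Here one must be slightly careful that the Ambrosio--Dal Maso chain rule applies to Lipschitz (not merely $C^1$) outer functions, which is precisely why their result is cited rather than the elementary $C^1$ chain rule; the continuity of $f$ is what guarantees that $f(x)$ avoids, up to a null set in $x$, the exceptional set where $\phi$ fails to be differentiable, after one takes into account that $f \in W^{1,n}$ maps null sets correctly — this is the standard subtlety and it is handled inside the cited corollary.

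With $\phi \circ f \in W^{1,n}_\loc(U) \cap C(U)$ in hand, the rest is bookkeeping. Each $f_{i_j} \in W^{1,n}_\loc(U)$, so $df_{i_j} \in W^{d, n, \infty}_\loc(\wedge^1 T^* U)$ (indeed $d(df_{i_j}) = 0$), and by the iterated product rule for weak differentials of Sobolev forms recalled before \eqref{eq:leibniz_rule}, the simple wedge $df_{i_1} \wedge \dots \wedge df_{i_k}$ lies in $W^{d, n/k, \infty}_\loc(\wedge^k T^* U)$ with vanishing weak differential. Pairing this with $\phi \circ f \in W^{1,n}_\loc(U)$ via the same product rule (using $\tfrac{1}{n} + \tfrac{k}{n} = \tfrac{k+1}{n} \le 1$ when $k \le n-1$, and $\tfrac{1}{n} + 0 \le 1$, $0 + \tfrac{k}{n} \le 1$ for the differential-integrability bookkeeping) yields $f^*(\phi \, dx_I) = (\phi\circ f)\, df_{i_1}\wedge\dots\wedge df_{i_k} \in W^{d, n/k, n/(k+1)}_\loc(\wedge^k T^* U)$, with $d f^*(\phi\, dx_I) = d(\phi\circ f) \wedge df_{i_1} \wedge \dots \wedge df_{i_k}$. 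Summing over $I$ gives $f^*\omega \in W^{d, n/k, n/(k+1)}_\loc(\wedge^k T^* U)$, which is the claim. The only genuinely delicate point, and the one I would flag as the main obstacle, is the justification of the Lipschitz chain rule under only $W^{1,n}_\loc \cap C$ regularity of $f$; everything downstream is a routine application of the product rules for Sobolev differential forms already stated in the excerpt.
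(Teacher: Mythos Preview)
Your approach is correct and matches the paper's own treatment, which does not spell out a proof but simply remarks that the lemma follows in a straightforward manner from the Ambrosio--Dal Maso chain rule for Lipschitz outer functions composed with Sobolev maps. One small clarification on the exponent bookkeeping: to obtain the first index $n/k$ (rather than $n/(k+1)$) from the product rule, use that $\phi\circ f$ is continuous and hence lies in $W^{d,\infty,n}_\loc(\wedge^0 T^*U)$, not merely $W^{d,n,n}_\loc$; alternatively, the $L^{n/k}_\loc$-integrability of $f^*\omega$ is immediate from the pointwise pull-back estimate $\abs{f^*\omega}\le(\abs{\omega}\circ f)\abs{Df}^k$ already recorded in the paper.
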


In particular, combining Lemma \ref{lem:lip_form_pullback} with \eqref{eq:leibniz_rule} unlocks the following tool.

\begin{cor}\label{cor:lip_form_pullback_wedges}
	Let $U , V \subset \R^n$ be open sets, let $f \in C(U, V) \cap W^{1,n}_\loc(U, \R^n)$, let $\omega_1 \in C(\wedge^k T^* V) \cap W^{1,\infty}_\loc(\wedge^k T^* V)$, and let $\omega_2 \in C(\wedge^l T^* V) \cap W^{1,\infty}_\loc(\wedge^l T^* V)$, with $k, l \in \Z_{\geq 0}$, $k + l \leq n-1$. Then 
	\[
		f^*(\omega_1 \wedge \omega_2) \in W^{d, \frac{n}{k+l}, \frac{n}{k+l+1}}_\loc(\wedge^{k+l} T^* U),
	\] 
	with
	\[
		d f^*(\omega_1 \wedge \omega_2) = (df^*\omega_1) \wedge \omega_2 + (-1)^k f^* \omega_1 \wedge (d f^* \omega_2).
	\]
\end{cor}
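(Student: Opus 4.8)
The plan is to deduce the corollary by combining Lemma \ref{lem:lip_form_pullback}, applied separately to $\omega_1$ and $\omega_2$, with the $W^d$-version of the Leibniz rule recorded just after \eqref{eq:leibniz_rule}. First I would establish the pointwise identity
\[
	f^*(\omega_1 \wedge \omega_2) = f^*\omega_1 \wedge f^*\omega_2
\]
as an equality of $L^1_\loc$-forms on $U$. This is purely algebraic: writing the pull-backs component-wise through the weak derivative $Df$ as in the rule following \eqref{eq:pullback_norm_estimate}, it reduces to the fact that $L^*(\alpha \wedge \beta) = L^*\alpha \wedge L^*\beta$ for every linear map $L \colon \R^n \to \R^n$ and all constant-coefficient forms $\alpha, \beta$, applied with $L = Df(x)$, $\alpha = (\omega_1)_{f(x)}$, $\beta = (\omega_2)_{f(x)}$ at a.e.\ $x \in U$; membership of all three pull-backs in the relevant $L^p_\loc$-spaces follows from \eqref{eq:pullback_norm_estimate} together with $f \in W^{1,n}_\loc(U, \R^n)$.

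Next, since $k, l \le k + l \le n-1$, Lemma \ref{lem:lip_form_pullback} applies to both $\omega_1$ and $\omega_2$, giving $f^*\omega_1 \in W^{d, n/k, n/(k+1)}_\loc(\wedge^k T^* U)$ and $f^*\omega_2 \in W^{d, n/l, n/(l+1)}_\loc(\wedge^l T^* U)$, with the usual reading $n/0 = \infty$ when $k = 0$ or $l = 0$. It then remains to check that the exponents $p_1 = n/k$, $q_1 = n/(k+1)$, $p_2 = n/l$, $q_2 = n/(l+1)$ meet the hypotheses of the $W^d$-Leibniz rule: indeed $p_1^{-1} + p_2^{-1} = (k+l)/n \le 1$ because $k+l \le n-1$, and $\max(p_1^{-1} + q_2^{-1}, p_2^{-1} + q_1^{-1}) = (k+l+1)/n \le 1$ because $k+l+1 \le n$. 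That rule then yields $f^*\omega_1 \wedge f^*\omega_2 \in W^{d, \frac{n}{k+l}, \frac{n}{k+l+1}}_\loc(\wedge^{k+l} T^* U)$ and $d(f^*\omega_1 \wedge f^*\omega_2) = (df^*\omega_1) \wedge f^*\omega_2 + (-1)^k f^*\omega_1 \wedge (df^*\omega_2)$, which combined with the first step gives the claim. (Here the first term on the right-hand side should be read as $(df^*\omega_1) \wedge f^*\omega_2$, both factors of the wedge living on $U$.)

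I do not expect a genuine obstacle: the argument is essentially bookkeeping to match the hypotheses of the quoted results. The only points needing slight care are the degenerate cases $k = 0$ or $l = 0$, where one must correctly interpret $W^{d, \infty, n}_\loc$ and observe that $f^*\omega_i$ is then simply the composition $\omega_i \circ f$, and the (routine) verification that the a.e.\ algebraic identity of the first step is also an identity of locally integrable forms — which is exactly where \eqref{eq:pullback_norm_estimate} and the hypothesis $f \in W^{1,n}_\loc$ enter.
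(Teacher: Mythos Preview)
Your proposal is correct and follows exactly the approach the paper indicates: the paper states the corollary without proof, merely noting that it follows by ``combining Lemma \ref{lem:lip_form_pullback} with \eqref{eq:leibniz_rule}''. You have also correctly spotted that the right-hand side in the statement should read $(df^*\omega_1) \wedge f^*\omega_2$ rather than $(df^*\omega_1) \wedge \omega_2$.
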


\section{Quasiregular values and maps between spheres}\label{sect:qrvalues_into_Sn}

\subsection{Maps into $\S^n$}
Let $e_i$ denote the standard basis vectors of $\R^n$, let $\ip{\cdot}{\cdot}$ denote the Euclidean inner product on $\R^n$, and let $\abs{\cdot}$ denote the induced norm. The $n$-dimensional unit sphere $\S^n$ consists of all $w \in \R^{n+1}$ with $\abs{w} = 1$. Recall that on $\R^n$, the inverse $s_n \colon \R^n \to \S^n \setminus \{-e_{1}\}$ of the stereographic projection is defined by
\[
	s_n(x) = \frac{1}{1 + \abs{x}^2} \left(1-\abs{x}^2, 2x_1, 2x_2, \dots, 2x_n \right).
\]
The map $s_n$ is then extended to $\R^n \cup \{\infty\}$ by setting $s_n(\infty) = -e_{1}$. 

We recall that the \emph{spherical distance} $\sigma$ on $\S^n$ is given by $\sigma(w_1, w_2) = \arccos \ip{w_1}{w_2}$ for $w_1, w_2 \in \S^n$, using the embedding of $\S^n$ into $\R^{n+1}$. We also define the spherical distance on $\R^n \cup \{\infty\}$ by setting $\sigma(x_1, x_2) = \sigma(s_n(x_1), s_n(x_2))$ for $x_1, x_2 \in \R^n \cup \{\infty\}$. Via an elementary computation, one sees for $x_1, x_2 \in \R^n$ that
\[
	\cos(\sigma(x_1, x_2)) = \ip{s_n(x_1)}{s_n(x_2)} = 1 - \frac{2\abs{x_1 - x_2}^2}{(1 + \abs{x_1}^2)(1 + \abs{x_1}^2)}.
\]
In particular, 
\begin{equation}\label{eq:spherical_metric_conversion}
	\sin \frac{\sigma(x_1, x_2)}{2} = \frac{\abs{x_1 - x_2}}{\sqrt{(1 + \abs{x_1}^2)(1 + \abs{x_2}^2)}} \qquad \text{for } x_1, x_2 \in \R^n.
\end{equation}
By letting $x_2$ tend to infinity in \eqref{eq:spherical_metric_conversion}, we also see that
\begin{equation}\label{eq:spherical_metric_conversion_infty}
	\sin \frac{\sigma(x_1, \infty)}{2} = \frac{1}{\sqrt{1 + \abs{x_1}^2}} \qquad \text{for } x_1 \in \R^n.
\end{equation}

We equip $\S^n$ with the standard Riemannian metric that arises from the embedding to $\R^{n+1}$, and orient $\S^n$ so that its volume form $\vol_{\S^n}$ is given by the restriction of the $n$-form $\hodge d (2^{-1}\abs{x}^2) \in C^\infty(\wedge^n T^*\R^{n+1})$. When $\S^n$ is equipped with this metric and volume form, the map $s_n \colon \R^n \to \S^n$ is conformal; more precisely, 
\begin{equation}\label{eq:stero_proj_conformal}
	\abs{Ds_n(x)}^n = J_{s_n}(x) = \frac{2^n}{\bigl(1 + \abs{x}^2\bigr)^n}
\end{equation}
for every $x \in \R^n$. Moreover, given a set $U \subset \R^n \cup \{\infty\}$, we denote its spherical measure by $\vol_{\S^n}(U)$. By using the formula \eqref{eq:stero_proj_conformal} for $J_{s_n}$, we see that
\begin{equation}\label{eq:spherical_measure_as_integral}
	\vol_{\S^n}(U) = \int_U \frac{2^n \vol_n}{\bigl(1 + \abs{x}^2\bigr)^n}.
\end{equation}

Suppose then that $f \in W^{1,n}_\loc(\R^n, \R^n)$. We define a measurable map $h \colon \R^n \to \S^n$ by $h = s_n \circ f$. Since $s_n \colon \R^n \to \R^{n+1}$ is a smooth Lipschitz map, it follows that $h \in W^{1,n}_\loc(\R^n, \R^{n+1})$, and moreover $Dh(x) = Ds_n(f(x)) Df(x)$ for a.e.\ $x \in \R^n$. In particular, the image of $Dh(x)$ lies in $T_{h(x)} \S^n$ for a.e.\ $x$, and hence $Dh$ can be understood as a measurable map $T\R^n \to T\S^n$. Consequently, we obtain a Jacobian of $h$ by $J_h \vol_n = h^* \vol_{\S^n}$. Since $s_n$ is conformal, we obtain by \eqref{eq:stero_proj_conformal} that
\begin{equation}\label{eq:Dh_and_Jh}
	\abs{Dh}^n = \frac{2^n \abs{Df}^n}{\bigl(1 + \abs{f}^2 \bigr)^n}
	\qquad \text{and} \qquad 
	J_h = \frac{2^n J_f}{\bigl(1 + \abs{f}^2 \bigr)^n}
\end{equation}
a.e.\ in $\R^n$.

We then recall that we have given a definition of quasiregular values with respect to the Euclidean metric \eqref{eq:QRvalue_def} and with respect to the spherical metric \eqref{eq:spherical_QR_value}. We now prove comparison results for these two definitions. We begin with the spherical interpretation of Euclidean quasiregular values.

\begin{lemma}\label{lem:QRvalue_sphere}
	Let $f \in W^{1,n}_\loc(\Omega, \R^n)$ with $\Omega \subset \R^n$. Let $h = s_n \circ f$, let $w_0 = s_n(y_0)$ for some $y_0 \in \R^n$, let $K \in \R$, and let $\Sigma \colon \Omega \to [0, \infty)$ be measurable. Then the following conditions are equivalent up to an extra constant factor $C = C(n, y_0)$ on $\Sigma$:
	\begin{enumerate}
		\item\label{enum:qrval_eucl} $f$ has a $(K, \Sigma)$-quasiregular value at $y_0$;
		\item\label{enum:qrvals_spheric_separate} $h$ has a $(K, \Sigma)$-quasiregular value with respect to the spherical metric at both $w_0$ and $s_n(\infty)$;
		\item\label{enum:qrvals_spheric_combined} $h$ satisfies
		\begin{equation*}
			\abs{Dh}^n \leq K J_h + \sigma^n(h, w_0)  \sigma^n(h, s_n(\infty)) \Sigma
		\end{equation*}
		a.e.\ in $\Omega$.
	\end{enumerate}
\end{lemma}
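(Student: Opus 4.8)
The plan is to prove the two equivalences \ref{enum:qrval_eucl}$\;\Leftrightarrow\;$\ref{enum:qrvals_spheric_combined} and \ref{enum:qrvals_spheric_combined}$\;\Leftrightarrow\;$\ref{enum:qrvals_spheric_separate}, keeping careful track of the constant factor that $\Sigma$ picks up at each step.

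First I would establish the equivalence of \ref{enum:qrval_eucl} and \ref{enum:qrvals_spheric_combined} by a pointwise computation. Multiplying both sides of \eqref{eq:QRvalue_def} by $2^n\bigl(1+\abs{f(x)}^2\bigr)^{-n}$, the relations \eqref{eq:Dh_and_Jh} turn the left-hand side into $\abs{Dh(x)}^n$ and the first term on the right into $KJ_h(x)$, so everything hinges on rewriting the coefficient $2^n\abs{f(x)-y_0}^n\bigl(1+\abs{f(x)}^2\bigr)^{-n}$ of $\Sigma(x)$. Substituting the expression for $\abs{f(x)-y_0}$ from \eqref{eq:spherical_metric_conversion} and the expression for $\bigl(1+\abs{f(x)}^2\bigr)^{-1/2}$ from \eqref{eq:spherical_metric_conversion_infty}, this coefficient becomes
\[
	\bigl(2\sqrt{1+\abs{y_0}^2}\,\bigr)^n\sin^n\tfrac{\sigma(f(x),y_0)}{2}\,\sin^n\tfrac{\sigma(f(x),\infty)}{2},
\]
and since $h=s_n\circ f$, the definition of the spherical distance on $\R^n\cup\{\infty\}$ identifies $\sigma(f(x),y_0)$ with $\sigma(h(x),w_0)$ and $\sigma(f(x),\infty)$ with $\sigma(h(x),s_n(\infty))$. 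The elementary two-sided bound $\sigma/\pi\le\sin(\sigma/2)\le\sigma/2$ for $\sigma\in[0,\pi]$ then shows that $\sin^n(\sigma/2)$ and $\sigma^n$ are comparable up to a factor depending only on $n$. Hence \eqref{eq:QRvalue_def} yields the inequality in \ref{enum:qrvals_spheric_combined} after multiplying $\Sigma$ by a constant $C(n,y_0)$, and running the same chain of equalities in reverse (multiplying \ref{enum:qrvals_spheric_combined} by $2^{-n}\bigl(1+\abs{f}^2\bigr)^n$ and applying the comparison the other way) recovers \eqref{eq:QRvalue_def}, again up to a factor $C(n,y_0)$.

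The implication \ref{enum:qrvals_spheric_combined}$\;\Rightarrow\;$\ref{enum:qrvals_spheric_separate} is immediate: bounding the factor $\sigma^n(h,s_n(\infty))$ by $\pi^n$ turns the error term of \ref{enum:qrvals_spheric_combined} into $\pi^n\sigma^n(h,w_0)\Sigma$, which is \eqref{eq:spherical_QR_value} at $w_0$ with $\Sigma$ rescaled by $\pi^n$, and symmetrically at $s_n(\infty)$. For the converse \ref{enum:qrvals_spheric_separate}$\;\Rightarrow\;$\ref{enum:qrvals_spheric_combined} I would exploit the fact that $h(x)=s_n(f(x))$ cannot be simultaneously close to $w_0$ and to $s_n(\infty)$: since $y_0\in\R^n$ we have $w_0\ne s_n(\infty)$, so $\delta:=\sigma(w_0,s_n(\infty))=\sigma(y_0,\infty)>0$, and the triangle inequality gives $\max\{\sigma(h(x),w_0),\sigma(h(x),s_n(\infty))\}\ge\delta/2$ for every $x$. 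At a.e.\ $x$ with $\sigma(h(x),w_0)\ge\delta/2$ one has $\sigma^n(h(x),s_n(\infty))\le(2/\delta)^n\sigma^n(h(x),w_0)\sigma^n(h(x),s_n(\infty))$, so \eqref{eq:spherical_QR_value} at $s_n(\infty)$ produces the inequality of \ref{enum:qrvals_spheric_combined} at $x$ with constant $(2/\delta)^n$; at the remaining $x$ one has $\sigma(h(x),s_n(\infty))\ge\delta/2$ and argues symmetrically with \eqref{eq:spherical_QR_value} at $w_0$. As $\delta$ depends only on $y_0$, this gives \ref{enum:qrvals_spheric_combined} with $\Sigma$ multiplied by $C(n,y_0)$.

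The only step that is not routine bookkeeping is this last one. Since the product $\sigma^n(h,w_0)\sigma^n(h,s_n(\infty))$ is pointwise no larger, up to a dimensional constant, than either of its factors, the error term in \ref{enum:qrvals_spheric_combined} is genuinely smaller than either error term in \ref{enum:qrvals_spheric_separate}, so passing from \ref{enum:qrvals_spheric_separate} back to \ref{enum:qrvals_spheric_combined} is not formal: it really requires the geometric observation that $h$ stays a fixed spherical distance away from at least one of the two points $w_0$ and $s_n(\infty)$. Everything else reduces to the conformality relations \eqref{eq:Dh_and_Jh} together with the metric identities \eqref{eq:spherical_metric_conversion} and \eqref{eq:spherical_metric_conversion_infty}.
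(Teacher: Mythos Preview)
Your proof is correct and follows essentially the same route as the paper's: both arguments establish \ref{enum:qrval_eucl}$\Leftrightarrow$\ref{enum:qrvals_spheric_combined} by multiplying \eqref{eq:QRvalue_def} through by $2^n(1+\abs{f}^2)^{-n}$ and rewriting the resulting coefficient via \eqref{eq:spherical_metric_conversion}--\eqref{eq:spherical_metric_conversion_infty}, and both handle \ref{enum:qrvals_spheric_separate}$\Rightarrow$\ref{enum:qrvals_spheric_combined} via the observation that $h(x)$ is always at spherical distance at least $\tfrac12\sigma(w_0,s_n(\infty))$ from one of the two points. The paper packages this last step as a bound on $\min(\sigma^{-n}(h,w_0),\sigma^{-n}(h,s_n(\infty)))$ rather than your explicit case split, but the content is identical.
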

\begin{proof}
	We first show the (almost) equivalence of \eqref{enum:qrval_eucl} and \eqref{enum:qrvals_spheric_combined}. We multiply \eqref{eq:QRvalue_def} on both sides by $2^n (1 + \abs{f}^2)^{-n}$ and use \eqref{eq:Dh_and_Jh}, obtaining that \eqref{eq:QRvalue_def} is equivalent to
	\[
		\abs{Dh}^n \leq K J_h + 2^n \frac{\abs{f - y_0}^n}{\bigl( 1 + \abs{f}^2\bigr)^n} \Sigma.
	\]
	Now, using \eqref{eq:spherical_metric_conversion} and \eqref{eq:spherical_metric_conversion_infty}, we observe that
	\begin{align*}
		\frac{\abs{f - y_0}}{1 + \abs{f}^2}
		&= \frac{\abs{f - y_0}}{\sqrt{(1 + \abs{f}^2)( 1 + \abs{y_0}^2)}} \cdot \frac{1}{\sqrt{1 + \abs{f}^2}} \cdot \sqrt{1 + \abs{y_0}^2}\\
		&= \sin \frac{\sigma(f, y_0)}{2} \cdot \sin \frac{\sigma(f, \infty)}{2} \cdot \left( \sin \frac{\sigma(y_0, \infty)}{2} \right)^{-1}.
	\end{align*}
	Thus, \eqref{eq:QRvalue_def} is equivalent to
	\[
		\abs{Dh}^n \leq K J_h + \frac{2^n \sin^n \bigl(2^{-1}\sigma(f, y_0)\bigr) \sin^n \bigl(2^{-1}\sigma(f, \infty)\bigr)}{\sin^n(2^{-1} \sigma(y_0, \infty))} \Sigma.
	\]
	Since $(2/\pi) t \leq \sin(t) \leq t$ whenever $t \in [0, \pi/2]$, the previous equation is equivalent to the one in part \eqref{enum:qrvals_spheric_combined}, up to an extra constant on $\Sigma$.
	
	It remains to show the (almost) equivalence of \eqref{enum:qrvals_spheric_separate} and \eqref{enum:qrvals_spheric_combined}. Since $\sigma(\cdot, \cdot)$ is bounded from above by $\pi$, it is clear from the definition of spherical quasiregular values in \ref{eq:spherical_QR_value} that \eqref{enum:qrvals_spheric_combined} implies \eqref{enum:qrvals_spheric_separate} up to an extra factor of $\pi^n$ on $\Sigma$. For the other diection, we use the fact that for any distinct $w_1, w_2 \in \S^n$, the function $w \mapsto \min (\sigma^{-1}(w, w_1), \sigma^{-1}(w, w_2))$ is continuous and has a maximum value of $2/\sigma(w_1, w_2)$. Thus, if we have \eqref{enum:qrvals_spheric_separate}, then we have the estimate
	\begin{align*}
		\abs{Dh}^n &\leq K J_h 
		+ \min\bigl(\sigma^n(h, w_0), \sigma^n(h, s_n(\infty))\bigr) \Sigma\\
		&= K J_h 
		+ \min\bigl(\sigma^{-n}(h, s_n(\infty)), \sigma^{-n}(h, w_0)\bigr) 
			\sigma^n(h, w_0) \sigma^n(h, s_n(\infty)) \Sigma\\
		&\leq K J_h + C(n, y_0) \sigma^n(h, w_0) \sigma^n(h, s_n(\infty)) \Sigma
	\end{align*}
	a.e.\ on $\Omega$, completing the proof.
\end{proof}

Next, we give the Eculidean interpretation of spherical quasiregular values.

\begin{lemma}\label{lem:QRvalue_sphere_single}
	Let $f \in W^{1,n}_\loc(\Omega, \R^n)$ with $\Omega \subset \R^n$. Let $h = s_n \circ f$, let $w_0 = s_n(y_0)$ for some $y_0 \in \R^n$, let $K \in \R$, and let $\Sigma \colon \Omega \to [0, \infty)$ be measurable. Then the following conditions are equivalent up to an extra constant factor $C = C(n, y_0)$ on $\Sigma$:
	\begin{enumerate}
		\item\label{enum:qrval_spheric} $h$ has a $(K, \Sigma)$-quasiregular value at $w_0$;
		\item\label{enum:qrval_eucl_weak} $f$ satisfies
		\begin{equation*}
			\abs{Df}^n \leq K J_f + \abs{f - y_0}^n \bigl( 1 + \abs{f}^2 \bigr)^\frac{n}{2} \Sigma
		\end{equation*}
		a.e.\ in $\Omega$.
	\end{enumerate}
	Similarly, the following conditions are equivalent up to an extra constant factor $C = C(n)$ on $\Sigma$:
	\begin{enumerate}[label=(\arabic*')]
		\item\label{enum:qrval_spheric_infty} $h$ has a $(K, \Sigma)$-quasiregular value at $s_n(\infty)$;
		\item\label{enum:qrval_eucl_weak_infty} $f$ satisfies
		\begin{equation*}
			\abs{Df}^n \leq K J_f + \bigl( 1 + \abs{f}^2 \bigr)^\frac{n}{2} \Sigma
		\end{equation*}
		a.e.\ in $\Omega$.
	\end{enumerate}
\end{lemma}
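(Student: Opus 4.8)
The plan is to repeat the computation from the proof of Lemma \ref{lem:QRvalue_sphere}, but now tracking a single spherical target point rather than a pair. Starting from the defining inequality \eqref{eq:spherical_QR_value} for a $(K, \Sigma)$-quasiregular value of $h = s_n \circ f$ at $w_0 = s_n(y_0)$, namely $\abs{Dh}^n \leq K J_h + \sigma^n(h, w_0) \Sigma$, I would substitute the identities \eqref{eq:Dh_and_Jh} and multiply through by $2^{-n}\bigl(1 + \abs{f}^2\bigr)^n$. Since $J_h = 2^n J_f \bigl(1 + \abs{f}^2\bigr)^{-n}$ exactly, this converts $K J_h$ into $K J_f$ regardless of the sign of $K$, and the inequality becomes equivalent to
\[
	\abs{Df}^n \leq K J_f + 2^{-n}\bigl(1 + \abs{f}^2\bigr)^n \sigma^n(h, w_0)\, \Sigma .
\]

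Next I would rewrite the coefficient of $\Sigma$ in Euclidean terms. Since $\sigma(h, w_0) = \sigma\bigl(s_n(f), s_n(y_0)\bigr)$ is by definition the spherical distance $\sigma(f, y_0)$ on $\R^n \cup \{\infty\}$, and since $(2/\pi)t \leq \sin t \leq t$ for $t \in [0, \pi/2]$ applied with $t = \sigma(f, y_0)/2 \in [0, \pi/2]$, formula \eqref{eq:spherical_metric_conversion} shows that $\sigma^n(h, w_0)$ is comparable, with constants depending only on $n$, to $\abs{f - y_0}^n \bigl((1 + \abs{f}^2)(1 + \abs{y_0}^2)\bigr)^{-n/2}$. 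Substituting this, the coefficient of $\Sigma$ is comparable, with constants depending only on $n$ and $y_0$ (the $y_0$-dependence entering only through the fixed factor $(1 + \abs{y_0}^2)^{-n/2}$), to $\abs{f - y_0}^n \bigl(1 + \abs{f}^2\bigr)^{n/2}$. Absorbing these constants into $\Sigma$ in both directions yields the equivalence of \eqref{enum:qrval_spheric} and \eqref{enum:qrval_eucl_weak}. For the primed statements I would run the identical argument with $w_0$ replaced by the fixed point $s_n(\infty)$, now invoking \eqref{eq:spherical_metric_conversion_infty} in place of \eqref{eq:spherical_metric_conversion}; this gives that $\sigma^n(h, s_n(\infty))$ is comparable, up to constants depending only on $n$, to $\bigl(1 + \abs{f}^2\bigr)^{-n/2}$, so the coefficient of $\Sigma$ is comparable to $\bigl(1 + \abs{f}^2\bigr)^{n/2}$ with constant $C(n)$; the target-independence of this constant is exactly what makes the primed equivalence cleaner than the unprimed one.

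There is no serious obstacle here: the argument is entirely parallel to, and slightly simpler than, the proof of Lemma \ref{lem:QRvalue_sphere}, since only one spherical target is involved and no minimum over two inverse distances need be taken. The only point requiring a little care is the bookkeeping of the multiplicative constants and making sure the two-sided comparison between $\sin t$ and $t$ is applied on the legitimate range, i.e.\ with argument $\sigma(\cdot,\cdot)/2 \in [0, \pi/2]$.
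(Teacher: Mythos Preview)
Your proposal is correct and follows essentially the same route as the paper: both arguments multiply through by $2^{\pm n}(1+\abs{f}^2)^{\pm n}$ using \eqref{eq:Dh_and_Jh}, convert $\sigma(h,w_0)$ via \eqref{eq:spherical_metric_conversion} (resp.\ \eqref{eq:spherical_metric_conversion_infty} for the primed case), and invoke the two-sided bound $(2/\pi)t \le \sin t \le t$ on $[0,\pi/2]$ to pass between $\sigma$ and $\sin(\sigma/2)$. The only cosmetic difference is direction: the paper starts from the Euclidean inequality \eqref{enum:qrval_eucl_weak} and rewrites it spherically, whereas you start from \eqref{enum:qrval_spheric} and rewrite it Euclideanly, but the computations are identical.
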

\begin{proof}
	For the first equivalence, similarly as in the proof of Lemma \ref{lem:QRvalue_sphere}, we may use \eqref{eq:spherical_metric_conversion}, \eqref{eq:spherical_metric_conversion_infty}, and \eqref{eq:Dh_and_Jh} to show that condition \eqref{enum:qrval_eucl_weak} is equivalent to 
	\[
		\abs{Dh}^n \leq K J_h + \frac{2^n \sin^n \bigl(2^{-1}\sigma(h, w_0)\bigr)}{\sin^n(2^{-1} \sigma(w_0, s_n(\infty)))} \Sigma.
	\]
	Since this is equivalent to \eqref{eq:spherical_QR_value} up to a constant of comparison on $\Sigma$, the claim follows. The proof of the second equivalence is analogous, as \eqref{eq:spherical_metric_conversion_infty} and \eqref{eq:Dh_and_Jh} yield that condition \ref{enum:qrval_eucl_weak_infty} is equivalent to
	\[
		\abs{Dh}^n \leq K J_h + 2^n \sin^n \biggl( \frac{\sigma(h, s_n(\infty))}{2} \biggr) \Sigma.
	\]
\end{proof}

We end this section by pointing out that the single-point Liouville theorem and Reshetnyak's theorem for Euclidean quasiregular values imply corresponding results for spherical quasiregular values.

\begin{prop}\label{prop:Liouville_and_Reshetnyak_spherical}
	Let $\Omega \subset \R^n$ be a domain, let $\eps > 0$, and let $h \in W^{1,n}_\loc(\Omega, \S^n)$ be a continuous map with a quasiregular value with respect to the spherical metric at $w_0 \in \S^n$, for given choices of $K \geq 1$ and $\Sigma \colon \Omega \to [0, \infty)$. Then the following results hold.
	\begin{enumerate}[label=(\roman*)]
		\item \label{item:Reshetnyak_sphere} (Reshetnyak's theorem) If $\Sigma \in L^{1+\eps}_\loc(\Omega)$ and if $h$ is not the constant function $h \equiv w_0$, then $h^{-1}\{w_0\}$ is discrete, the local index $i(x, h)$ is positive at every $x \in h^{-1}\{w_0\}$, and $h$ maps every neighborhood $U \subset \Omega$ of a point of $f^{-1}\{w_0\}$ to a neighborhood $h(U)$ of $w_0$.
		\item \label{item:Liouville_sphere} (Liouville theorem) If $\Omega = \R^n$, $\Sigma \in L^{1+\eps}_\loc(\R^n) \cap L^1(\R^n)$, and $\overline{h(\R^n)} \neq \S^n$, then either $h \equiv w_0$ or $w_0 \notin h(\R^n)$.
	\end{enumerate}
\end{prop}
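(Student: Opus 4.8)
The plan is to deduce both statements from their Euclidean counterparts in Theorem~\ref{thm:single_value_results} by post-composing $h$ with a rotation of $\S^n$ and then with the stereographic chart $s_n^{-1}$. The point is that any rotation $\rho$ of $\S^n$ is an orientation-preserving isometry, so $\rho \circ h$ still lies in $W^{1,n}_\loc$, has the same $\abs{Dh}$ and $J_h$, and still has a $(K, \Sigma)$-quasiregular value with respect to the spherical metric, now at $\rho(w_0)$. If $\rho$ is chosen so that $\rho(w_0) \neq -e_1 = s_n(\infty)$ and so that, locally or globally as appropriate, the image of $h$ avoids $\rho^{-1}(-e_1)$, then $f := s_n^{-1} \circ \rho \circ h$ is a well-defined continuous $\R^n$-valued map in $W^{1,n}_\loc$ (using that $s_n^{-1}$ is locally Lipschitz away from $-e_1$), and Lemma~\ref{lem:QRvalue_sphere_single} shows that $f$ satisfies
\[
	\abs{Df}^n \leq K J_f + \abs{f - y_0}^n \bigl(1 + \abs{f}^2\bigr)^{n/2} \Sigma, \qquad y_0 := s_n^{-1}(\rho(w_0)),
\]
up to a multiplicative constant $C(n, y_0)$ on $\Sigma$. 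The one new observation needed is that wherever $f$ is bounded, the weight $(1 + \abs{f}^2)^{n/2}$ is bounded, so there $f$ genuinely has a $(K, C\Sigma)$-quasiregular value at $y_0$ with $C\Sigma$ in the same $L^p$-class as $\Sigma$. Finally, $\rho^{-1} \circ s_n$ is an orientation-preserving diffeomorphism near $y_0$, so it carries the conclusions of the Euclidean results back to $h$: the sets $h^{-1}\{w_0\}$ and $f^{-1}\{y_0\}$ agree near the relevant points, $i(x_0, h) = i(x_0, f)$, and openness of $f$ at such a point gives that $h$ maps its neighborhoods to neighborhoods of $w_0$.

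For part~\ref{item:Liouville_sphere}: if $w_0 \notin h(\R^n)$ there is nothing to prove, so I would suppose $h(x_0) = w_0$ for some $x_0 \in \R^n$. Since $\overline{h(\R^n)} \neq \S^n$, choose $p \in \S^n \setminus \overline{h(\R^n)}$ and a rotation $\rho$ with $\rho(p) = -e_1$. Then $\rho(\overline{h(\R^n)})$ is a compact subset of $\S^n \setminus \{-e_1\}$, so $f = s_n^{-1} \circ \rho \circ h \colon \R^n \to \R^n$ is bounded, and $w_0 \neq p$ forces $y_0 = s_n^{-1}(\rho(w_0)) \in \R^n$. By the first paragraph, $f$ has a $(K, C\Sigma)$-quasiregular value at $y_0$ with $C\Sigma \in L^{1+\eps}_\loc(\R^n) \cap L^1(\R^n)$, and $f(x_0) = y_0 \in f(\R^n)$; since $f$ is bounded, Theorem~\ref{thm:single_value_results}\ref{item:Liouville} forces $f \equiv y_0$, hence $h = \rho^{-1} \circ s_n \circ f \equiv w_0$.

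For part~\ref{item:Reshetnyak_sphere}: I would argue locally. Fix $x_0 \in h^{-1}\{w_0\}$, pick $\rho$ with $\rho(w_0) \neq -e_1$, and use continuity of $h$ to choose a ball $B \ni x_0$ with $\overline B \subset \Omega$ on which $\rho \circ h$ stays in a compact subset of $\S^n \setminus \{-e_1\}$; then $f = s_n^{-1} \circ \rho \circ h$ is bounded on $B$ and, by the first paragraph, has a $(K, C\Sigma)$-quasiregular value at $y_0$ on $B$ with $C\Sigma \in L^{1+\eps}_\loc(B)$. Theorem~\ref{thm:single_value_results}\ref{item:Reshetnyak} then says that either $f \equiv y_0$ on $B$, or $f^{-1}\{y_0\} \cap B$ is discrete with positive local index and $f$ is open at its points. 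To exclude the first alternative I would show that the interior $W$ of $h^{-1}\{w_0\}$ is relatively clopen in $\Omega$: it is open by definition, and it is closed because at any $x_1 \in \overline W \cap \Omega$ one has $h(x_1) = w_0$, the associated local map $f$ equals its value $y_1$ on the nonempty open set $B_1 \cap W$, and so the contrapositive of Theorem~\ref{thm:single_value_results}\ref{item:Reshetnyak} forces $f \equiv y_1$ on all of $B_1$, i.e.\ $B_1 \subset W$. Since $\Omega$ is connected and $h \not\equiv w_0$, this yields $W = \emptyset$, so $f \not\equiv y_0$ on $B$; transporting the remaining conclusions through $\rho^{-1} \circ s_n$ gives exactly the assertions of part~\ref{item:Reshetnyak_sphere} at $x_0$.

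The main point requiring care will be this global propagation in part~\ref{item:Reshetnyak_sphere}: the Euclidean Reshetnyak theorem is only available on chart domains where $h$ misses $-e_1$, so one must patch the local applications together along $\Omega$ to rule out $h$ being locally — but not globally — the constant $w_0$. Everything else is routine bookkeeping: checking orientations so that local indices match, and checking that multiplying $\Sigma$ by the bounded factor $(1 + \abs{f}^2)^{n/2}$ and by the comparison constant of Lemma~\ref{lem:QRvalue_sphere_single} preserves membership in $L^{1+\eps}_\loc$, respectively in $L^{1+\eps}_\loc \cap L^1$.
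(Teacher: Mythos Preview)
Your proposal is correct and follows essentially the same strategy as the paper: rotate so that $w_0$ is not the pole, pass to the Euclidean chart via $s_n^{-1}$, observe that boundedness of $f$ makes $(1+\abs{f}^2)^{n/2}\Sigma$ lie in the same integrability class as $\Sigma$, and invoke Theorem~\ref{thm:single_value_results}.

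The only noteworthy difference is in the localization for part~\ref{item:Reshetnyak_sphere}. You localize to small balls around each $x_0\in h^{-1}\{w_0\}$ and then run an explicit clopen argument on $\intr(h^{-1}\{w_0\})$ to rule out $f$ being locally constant. The paper instead localizes once to $\Omega' = h^{-1}(U)$ for a fixed spherical neighborhood $U$ of $w_0$ with $\overline{U}\not\ni s_n(\infty)$; this has the advantage that $f$ is automatically bounded on all of $\Omega'$ and $h^{-1}\{w_0\}\subset\Omega'$, so a single application of the Euclidean Reshetnyak theorem on $\Omega'$ handles every preimage point at once. That said, $\Omega'$ need not be connected, so the paper's proof implicitly still needs the same clopen reasoning (applied to components of $\Omega'$) that you spell out; you are simply more explicit about a step the paper glosses over.
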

\begin{proof}
	Suppose first that the assumptions of \ref{item:Reshetnyak_sphere} hold. If we post-compose $h$ with an isometric rotation $R \colon \S^n \to \S^n$, it follows that $R \circ h$ has a $(K, \Sigma)$- quasiregular value with respect to the spherical metric at $R(w_0)$. Thus, we may assume that $w_0 \neq \infty$. Let $y_0 \in \R^n$ be the point for which $s_n(y_0) = w_0$.
	
	We select an open neighborhood $U$ of $w_0$ such that $\infty \notin \overline{U}$. Now, in the set $\Omega' = h^{-1} U$, there is a bounded, continuous $f \in W^{1,n}_\loc(\Omega', \R^n)$ such that $h = s_n \circ f$. By Lemma \ref{lem:QRvalue_sphere_single}, $f$ has a $(K, \Sigma')$-quasiregular value at $y_0$, where $\Sigma' = C(n, y_0) (1 + \abs{f}^2)^{n/2} \Sigma$. By boundedness of $f$, clearly $\Sigma' \in L^{1+\eps}_\loc(\Omega')$. Now, the Euclidean result (Theorem \ref{thm:single_value_results} \ref{item:Reshetnyak}) yields the claim for $h \vert_{\Omega'}$. Since $\Omega'$ is the pre-image of a neighborhood of $w_0$ under $h$, this in fact implies the result for $h$.
	
	Suppose then that the assumptions of \ref{item:Liouville_sphere} hold. If $w_0 \in \S^n \setminus \overline{h(\R^n)}$, then clearly $w_0 \notin h(\R^n)$ and the claim holds. Otherwise, by post-composing by an isometric rotation, we may this time assume that $s_n(\infty) \in \S^n \setminus \overline{h(\R^n)}$ and that $w_0 = s_n(y_0)$ for some $y_0 \in \R^n$. Consequently, we obtain a bounded, continuous map $f \in W^{1,n}_\loc(\R^n, \R^n)$ such that $h = s_n \circ f$. Lemma \ref{lem:QRvalue_sphere_single} again yields that $f$ has a $(K, \Sigma')$-quasiregular value at the point $y_0$, where $\Sigma' = C(n, y_0) (1 + \abs{f}^2)^{n/2} \Sigma$. By boundedness of $f$, we have $\Sigma' \in L^1(\R^n) \cap L^{1+\eps}_\loc(\R^n)$, and hence the corresponding Euclidean result (Theorem \ref{thm:single_value_results} \ref{item:Liouville}) implies that either $w_0 \notin h(\R^n)$ or $h \equiv w_0$.
\end{proof}

\section{Logarithmic singularity and Caccioppoli inequalities}\label{sect:caccioppoli}

In this section, we prove the Caccioppoli-type inequalitied used in the proof. In particular, we require counterparts to \cite[Lemmas 4.2 and 5.4]{Bonk-PoggiCorradini_Rickman-Picard} where we assume \eqref{eq:QRvalue_def} instead of full quasiregularity. Since our setting still allows for large sets where $J_f(x) = 0$ and $Df(x)$ is non-invertible, we lack a good counterpart for the induced conformal structure $G_f(x) = J_f^{-2/n}(x) [D^{T}\!f(x) \, Df(x)]^{-1}$ of a quasiregular map. Thus, instead of using $\cA$-subharmonic theory as in the original proofs, we rely on more direct computations.

\subsection{The logarithmic singularity function}

We begin by recalling the logarithmic singularity function from \cite[Section 3]{Bonk-PoggiCorradini_Rickman-Picard}. We first define a function $S \colon [0, \infty) \to [0, 1)$ by
\begin{equation}\label{eq:S_def}
	S(r) = \frac{\vol_{\S^n}(\B^n(0, r))}{\vol_{\S^n}(\R^n)}.
\end{equation}
By using \eqref{eq:spherical_measure_as_integral}, one can see that
\[
	S(r) = \frac{2^n \vol_{\S^{n-1}}(\R^{n-1})}{\vol_{\S^n}(\R^n)} \int_0^r \frac{t^{n-1} \dd t}{\bigl(1 + t^2\bigr)^n}.
\]
In particular,
\begin{equation}\label{eq:S_deriv}
	S'(r) = \frac{C(n) r^{n-1}}{\bigl(1 + r^2\bigr)^n},
\end{equation}
and we obtain the the following estimates describing the asymptotic behavior of $S(r)$ and $S'(r)$ for large and small $r$:
\begin{equation}\label{eq:S_estimates}
	S(r) \lesssim_n \min(r^n, 1), \qquad S'(r) \lesssim_n \min(r^{n-1}, r^{-(n+1)}).
\end{equation}
Next, a function $H \colon [0, \infty) \to [0, \infty)$ is defined by
\begin{equation}\label{eq:H_def}
	H(r) = \int_0^r \frac{S^\frac{1}{n-1}(t) \dd t}{t}.
\end{equation}
Consequently, we have
\begin{equation}\label{eq:H_deriv}
	H'(r) = \frac{S^\frac{1}{n-1}(r)}{r},
\end{equation}
and by applying \eqref{eq:S_estimates}, we get the estimates
\begin{equation}\label{eq:H_estimates}
	H(r) \lesssim_n \min\left(r^\frac{n}{n-1}, 1 + \abs{\log(r)}\right), \qquad H'(r) \lesssim_n \min\left(r^\frac{1}{n-1}, r^{-1}\right).
\end{equation}

The logarithmic potential $v \colon \R^n \to [0, \infty]$ at infinity is then defined on $\R^n$ by
\begin{equation}\label{eq:v_def}
	v(x) = H(\abs{x}).
\end{equation}
Since $v$ is a real-valued radial function, we have
\begin{align}\label{eq:v_abs_value}
	\nabla v(x) = H'(\abs{x}) \frac{x}{\abs{x}} \qquad \text{and}\qquad \abs{\nabla v(x)} = H'(\abs{x}).
\end{align}
Moreover, recall that the \emph{$n$-Laplacian} $\Delta_n v$ of $v$ is equivalently defined by either of the following two formulae:
\[
	\Delta_n v = \nabla \cdot (\abs{\nabla v}^{n-2} \nabla v), \qquad (\Delta_n v) \vol_n = d(\abs{dv}^{n-2} \hodge dv).
\]
We record that the $n$-Laplacian of $v$ is in fact exactly the density of the spherical volume; we refer to \cite[Lemma 3.1]{Bonk-PoggiCorradini_Rickman-Picard} for the proof. 

\begin{lemma}\label{lem:v_properties}
	Let $v \colon \R^n \to [0, \infty)$ be as in \eqref{eq:v_def}. Then $v \in C^1(\R^n)$, $\abs{\nabla v}^{n-2} \nabla v \in C^1(\R^n, \R^n)$, and 
	\[
		\Delta_n v(x) = \frac{2^n}{\bigl(1 + \abs{x}^2\bigr)^n} = J_{s_n}(x).
	\]  
\end{lemma}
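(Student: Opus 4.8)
My plan is to reduce the whole statement to the radial structure of $v$. On $\R^n\setminus\{0\}$, the objects $v$, $\nabla v$ and $\abs{\nabla v}^{n-2}\nabla v$ are smooth by the chain rule together with the explicit formulas \eqref{eq:v_abs_value} and \eqref{eq:H_deriv}, so the only substantive points are the $C^1$-regularity across the origin and a single divergence computation. The $n$-Laplacian identity is also \cite[Lemma 3.1]{Bonk-PoggiCorradini_Rickman-Picard} and could be quoted verbatim, but I would prefer to record the short direct argument.

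First I would prove $v\in C^1(\R^n)$. By \eqref{eq:H_estimates} we have $H'(r)\lesssim_n r^{1/(n-1)}\to 0$ as $r\to 0^+$, and $H'$ is continuous on $(0,\infty)$ by \eqref{eq:H_deriv}, so $H$ extends to a $C^1$ function on $[0,\infty)$ with $H'(0)=0$. Consequently $v=H(\abs{\cdot})$ is $C^1$ on $\R^n\setminus\{0\}$ with $\nabla v(x)=H'(\abs x)\,x/\abs x$ as in \eqref{eq:v_abs_value}; this extends continuously by $0$ across the origin, and the estimate $\abs{v(x)-v(0)}=\bigl|\int_0^{\abs x}H'\bigr|\le\abs{x}\sup_{[0,\abs x]}\abs{H'}=o(\abs x)$ shows $v$ is differentiable at $0$ with $\nabla v(0)=0$. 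Hence $v\in C^1(\R^n)$.

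Next I would prove $\abs{\nabla v}^{n-2}\nabla v\in C^1(\R^n,\R^n)$. From \eqref{eq:v_abs_value} and \eqref{eq:H_deriv}, for $x\ne 0$ we have $\abs{\nabla v(x)}^{n-2}\nabla v(x)=H'(\abs x)^{n-1}\,x/\abs x=\bigl(S(\abs x)/\abs x^{n}\bigr)x$. The decisive step is to reparametrize the radial factor: substituting $t=\abs x\,s$ into $S(\abs x)=\int_0^{\abs x}S'$ and using \eqref{eq:S_deriv} gives
\[
	\frac{S(\abs x)}{\abs x^{n}}=\Phi(\abs x^2),\qquad \Phi(\rho):=C(n)\int_0^1\frac{s^{n-1}\,\dd s}{(1+\rho s^2)^n},
\]
and differentiation under the integral sign (legitimate on compact $\rho$-intervals, since the integrand and all its $\rho$-derivatives are bounded there) yields $\Phi\in C^\infty([0,\infty))$. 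Therefore $\abs{\nabla v}^{n-2}\nabla v$ agrees off the origin with the $C^\infty$ vector field $x\mapsto\Phi(\abs x^2)\,x$, and both vanish at $0$, so $\abs{\nabla v}^{n-2}\nabla v\in C^\infty(\R^n,\R^n)\subset C^1(\R^n,\R^n)$.

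Finally I would compute $\Delta_n v$. Since $\abs{\nabla v}^{n-2}\nabla v$ is now known to be $C^1$, its weak divergence is the classical one, and I would evaluate it from the elementary identity $\nabla\cdot\bigl(\psi(\abs x)\,x/\abs x\bigr)=\abs x^{1-n}\bigl(\abs x^{n-1}\psi(\abs x)\bigr)'$ applied with $\psi(r)=H'(r)^{n-1}=S(r)/r^{n-1}$. As $\abs x^{n-1}\psi(\abs x)=S(\abs x)$, this collapses to $\Delta_n v(x)=S'(\abs x)/\abs x^{n-1}$, which by \eqref{eq:S_deriv} is a constant multiple of $(1+\abs x^2)^{-n}$; identifying the constant via the definition \eqref{eq:spherical_measure_as_integral} of $\vol_{\S^n}$ and \eqref{eq:stero_proj_conformal} then gives $\Delta_n v=2^n(1+\abs{\cdot}^2)^{-n}=J_{s_n}$. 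I expect the regularity across the origin in the first two steps to be the only genuine obstacle: the naive formulas $H'(\abs x)\,x/\abs x$ for $\nabla v$ and $\bigl(S(\abs x)/\abs x^{n}\bigr)x$ for $\abs{\nabla v}^{n-2}\nabla v$ are a priori singular there, and it is the vanishing of $H'(0)$ together with the $\Phi(\abs x^2)$ reparametrization that dissolves the apparent singularity; the remaining divergence computation and the identification of the constant are routine.
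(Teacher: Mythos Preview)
The paper does not give its own proof of this lemma; it simply refers to \cite[Lemma 3.1]{Bonk-PoggiCorradini_Rickman-Picard}. Your self-contained argument is correct and goes beyond what the paper provides, and the reparametrization $S(\abs x)/\abs x^{n} = \Phi(\abs x^2)$ with $\Phi\in C^\infty([0,\infty))$ is a particularly clean way to see that $\abs{\nabla v}^{n-2}\nabla v$ is in fact smooth across the origin, not merely $C^1$.

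One small caveat on your final step: if you actually track the constant in $\Delta_n v(x) = S'(\abs x)/\abs x^{n-1}$ using \eqref{eq:S_deriv}, you get $\dfrac{\omega_{n-1}}{\omega_n}\cdot\dfrac{2^n}{(1+\abs x^2)^n}$, where $\omega_k=\cH^k(\S^k)$, so the displayed equality $\Delta_n v = J_{s_n}$ is off by the dimensional factor $\omega_{n-1}/\omega_n$ (for instance $\Delta_2 v = \tfrac12 J_{s_2}$, as one checks directly from $\nabla v(x)=x/(1+\abs x^2)$). Your phrase ``identifying the constant via \eqref{eq:spherical_measure_as_integral} and \eqref{eq:stero_proj_conformal}'' glosses over this. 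The discrepancy is harmless for the paper, since every subsequent use of the lemma is under $\lesssim_n$, but the identity as stated is not literally exact.
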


\subsection{Quasiregular values and superlevel sets}

The use of sublevel and superlevel sets has been perhaps the most fundamental tool in obtaining the current results on quasiregular values; see \cite[Section 5]{Kangasniemi-Onninen_Heterogeneous} and \cite[Section 4]{Kangasniemi-Onninen_1ptReshetnyak}. They also play a key role in this paper. Indeed, we require a counterpart to \cite[Lemma 4.4]{Bonk-PoggiCorradini_Rickman-Picard}, which essentially yields that the superlevel sets $\{\abs{f} > L\}$ of a non-constant entire quasiregular function $f$ have no bounded components. As stated in the introduction, superlevel set methods do not fully eliminate the existence of bounded components of $\{\abs{f} > L\}$ in our case, which ends up causing significant complications during the proof. However, we do get a type of control on the total size of any bounded components of $\{\abs{f} > L\}$.

In particular, our main counterpart to \cite[Lemma 4.4]{Bonk-PoggiCorradini_Rickman-Picard} is the following general result, which is similar in spirit to \cite[Lemma 5.3]{Kangasniemi-Onninen_Heterogeneous} and \cite[Lemma~4.3]{Kangasniemi-Onninen_1ptReshetnyak}.

\begin{lemma}\label{lem:superlevel_bdd_component}
	Let $y_0 \in \R^n$ and $r > 0$. Suppose that $f \in W^{1,n}_\loc(\R^n, \R^n)$ is continuous and satisfies an estimate of the form
	\begin{equation}\label{eq:general_sigma_ineq}
		\abs{Df}^n \leq K J_f + \tilde{\Sigma},
	\end{equation}
	where we assume $K \in \R$ and $\tilde{\Sigma} \in L^{1}_\loc(\R^n)$. Let $U$ be a bounded component of $f^{-1}(\R^n \setminus \overline{\B^n}(y_0, r))$. Then for any continuous function $\Phi \colon [r, \infty) \to [0, \infty)$, we have
	\[
		\int_U \Phi(\abs{f - y_0}) \abs{Df}^n \leq \int_U \Phi(\abs{f - y_0}) \tilde{\Sigma}.
	\]
\end{lemma}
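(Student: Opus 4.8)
The plan is to reduce everything to the single vanishing identity $\int_U \Phi(\abs{f-y_0})\,J_f = 0$, and then integrate \eqref{eq:general_sigma_ineq} against the nonnegative weight $\Phi(\abs{f-y_0})$. First note that all the integrals involved are finite: since $U$ is bounded, $\overline U$ is compact, so $f(\overline U)$ is a compact subset of $\R^n$ and $\abs{f-y_0}$ takes values in some interval $[r,M]$ on $\overline U$; hence $\Phi(\abs{f-y_0})$ is bounded on $U$, while $\abs{Df}^n$, and therefore $J_f$, lies in $L^1(\overline U)$, and $\tilde\Sigma\in L^1(\overline U)$. Once $\int_U\Phi(\abs{f-y_0})\,J_f=0$ is known, multiplying \eqref{eq:general_sigma_ineq} by $\Phi(\abs{f-y_0})\ge 0$ and integrating over $U$ gives $\int_U\Phi(\abs{f-y_0})\abs{Df}^n\le K\cdot 0+\int_U\Phi(\abs{f-y_0})\tilde\Sigma$, which is the claim. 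So the whole content lies in the vanishing of $\int_U\Phi(\abs{f-y_0})\,J_f$.

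To obtain this, I would realize $\Phi(\abs{y-y_0})\,dy_1\wedge\cdots\wedge dy_n$ as an exact form $d\alpha$, with $\alpha$ vanishing on the sphere $\{\abs{y-y_0}=r\}$, and then pull back under $f$ and integrate over $U$. Concretely, set $\rho(t)=t^{-n}\int_r^t s^{n-1}\Phi(s)\,ds$, so that $\rho(r)=0$ and $(t^n\rho(t))'=t^{n-1}\Phi(t)$, i.e.\ $t\rho'(t)+n\rho(t)=\Phi(t)$, and let $\alpha=\rho(\abs{y-y_0})\,\omega_0$, where $\omega_0=\sum_{i=1}^n(-1)^{i-1}(y_i-(y_0)_i)\,dy_1\wedge\cdots\wedge\widehat{dy_i}\wedge\cdots\wedge dy_n$ is the standard radial $(n-1)$-form satisfying $d\omega_0=n\,dy_1\wedge\cdots\wedge dy_n$ and $d\abs{y-y_0}\wedge\omega_0=\abs{y-y_0}\,dy_1\wedge\cdots\wedge dy_n$. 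A direct computation then gives $d\alpha=\bigl(t\rho'(t)+n\rho(t)\bigr)\big|_{t=\abs{y-y_0}}\,dy_1\wedge\cdots\wedge dy_n=\Phi(\abs{y-y_0})\,dy_1\wedge\cdots\wedge dy_n$ away from $y_0$. I also record the elementary topological fact that $\abs{f-y_0}=r$ on $\partial U$: since components of open subsets of $\R^n$ are open, $\partial U$ is disjoint from the open set $f^{-1}(\R^n\setminus\overline{\B^n}(y_0,r))$, so $\abs{f-y_0}\le r$ there, while $\abs{f-y_0}\ge r$ on $\partial U$ by continuity.

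The subtle point is that $f^*\alpha$ need not have compact support in $U$, since its coefficients involve $Df$, which does not vanish near $\partial U$; thus \eqref{eq:d_integral_is_zero} cannot be applied to $f^*\alpha$ directly. I would handle this by cutting $\alpha$ off near the sphere $\{\abs{y-y_0}=r\}$. For $\delta\in(0,1)$ pick $\psi_\delta\in C^\infty(\R)$ with $\psi_\delta\equiv 0$ on $(-\infty,r+\delta]$, $\psi_\delta\equiv 1$ on $[r+2\delta,\infty)$, $0\le\psi_\delta\le 1$, and $\abs{\psi_\delta'}\le C\delta^{-1}$. Then $\psi_\delta(\abs{y-y_0})\,\alpha$ has $C^1$ coefficients on all of $\R^n$ (it vanishes near $y_0$), so by the pullback rules for $C^1$ forms under continuous $W^{1,n}_\loc$ maps recalled in Section~\ref{sect:sobolev_prelims}, the form $\beta_\delta:=f^*\bigl(\psi_\delta(\abs{y-y_0})\,\alpha\bigr)$ lies in $W^{d,\frac{n}{n-1},1}_\loc(\wedge^{n-1}T^*\R^n)$, and using $d\bigl(\psi_\delta(\abs{y-y_0})\,\alpha\bigr)=\psi_\delta'(\abs{y-y_0})\,d\abs{y-y_0}\wedge\alpha+\psi_\delta(\abs{y-y_0})\,d\alpha$ together with the identity for $d\abs{y-y_0}\wedge\omega_0$ above,
\[
	d\beta_\delta=\Bigl[\psi_\delta'(\abs{f-y_0})\,\rho(\abs{f-y_0})\,\abs{f-y_0}+\psi_\delta(\abs{f-y_0})\,\Phi(\abs{f-y_0})\Bigr]J_f\,\vol_n .
\]
Since $\beta_\delta$ vanishes wherever $\abs{f-y_0}\le r+\delta$, and $\abs{f-y_0}=r$ on $\partial U$, the set where $\beta_\delta\ne 0$ has closure contained in the bounded set $U$; hence $\beta_\delta|_U\in W^{d,1}_0(\wedge^{n-1}T^*U)$, and \eqref{eq:d_integral_is_zero} gives $\int_U d\beta_\delta=0$, that is
\[
	\int_U\psi_\delta(\abs{f-y_0})\,\Phi(\abs{f-y_0})\,J_f\,\vol_n=-\int_U\psi_\delta'(\abs{f-y_0})\,\rho(\abs{f-y_0})\,\abs{f-y_0}\,J_f\,\vol_n .
\]

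Finally I would let $\delta\to 0$. On $U$ one has $\abs{f-y_0}>r$ pointwise, so $\psi_\delta(\abs{f-y_0})\to 1$ and $\psi_\delta'(\abs{f-y_0})\to 0$ pointwise; the left integrand converges to $\Phi(\abs{f-y_0})\,J_f$ and is dominated by $\bigl(\sup_{[r,M]}\Phi\bigr)\abs{J_f}\in L^1(U)$, while for the right integrand the estimate $\abs{\rho(t)}\lesssim_{n,r,\Phi}(t-r)$ for $t\in[r,r+2]$ cancels the $\delta^{-1}$ growth of $\psi_\delta'$, so it too is dominated by a fixed multiple of $\abs{J_f}$ and tends to $0$ pointwise. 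Dominated convergence then yields $\int_U\Phi(\abs{f-y_0})\,J_f=0$, and the lemma follows as explained in the first paragraph. The main obstacle is precisely this cutoff-and-limit step; it works only because the radial weight $\rho$ was built to vanish \emph{linearly} at $t=r$, which is what lets the boundary term involving $\psi_\delta'$ be controlled uniformly in $\delta$.
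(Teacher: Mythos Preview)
Your proof is correct. Both your argument and the paper's agree on the overall architecture: establish the vanishing $\int_U \Phi(\abs{f-y_0})\,J_f = 0$, then multiply \eqref{eq:general_sigma_ineq} by $\Phi(\abs{f-y_0})$ and integrate. The difference lies entirely in how that vanishing is obtained.

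The paper proceeds via degree theory: it invokes a Sobolev change of variables formula (citing Fonseca--Gangbo) to write $\int_U \Phi(\abs{f-y_0})\,J_f = \int_{\R^n\setminus\overline{\B^n}(y_0,r)}\Phi(\abs{y-y_0})\,\deg(f,y,U)\,dy$, then observes that $f(\partial U)\subset\partial\B^n(y_0,r)$ and that $f(\overline{U})$, being compact, misses some point $y_1$ of the connected set $\R^n\setminus\overline{\B^n}(y_0,r)$; hence $\deg(f,\cdot,U)\equiv 0$ there. This is short and conceptual but relies on the cited change-of-variables/degree machinery. Your route is instead a direct Stokes-type computation entirely within the Sobolev-forms framework of Section~\ref{sect:sobolev_prelims}: you build an explicit radial primitive $\alpha$ with $d\alpha=\Phi(\abs{y-y_0})\,\vol_n$ and $\alpha=0$ on the sphere $\{\abs{y-y_0}=r\}$, cut it off just outside that sphere, and pass to the limit using the linear vanishing of $\rho$ at $r$ to kill the boundary contribution. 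The trade-off is that your argument is more hands-on and somewhat longer, but self-contained---it avoids invoking the degree-theoretic change of variables, which may be an advantage if one wants to keep the proof internal to the differential-forms calculus already set up in the paper.
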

\begin{proof}
	Since $U$ is bounded and since $f$ is continuous, $f(\overline{U})$ is compact, and hence there exists $y_1 \in \R^n \setminus (\overline{\B^n}(y_0, r) \cup f(\overline{U}))$. By the boundedness of $U$ and the continuity of $f$ and $\Phi$, we also have that the functions $\Phi(\abs{f - y_0}) \abs{Df}^n$ and $\Phi(\abs{f - y_0}) \tilde{\Sigma}$ are integrable over $U$.  By a Sobolev change of variables, see e.g.\ \cite[Theorem 5.27]{Fonseca-Gangbo-book}, we have
	\[
		\int_U \Phi(\abs{f - y_0}) J_f = \int_{\R^n \setminus \overline{\B^n}(y_0, r)} \Phi(\abs{y - y_0}) \deg(f, y, U) \vol_n(y). 
	\]
	However, since $U$ is a connected component of $f^{-1}(\R^n \setminus \overline{\B^n}(y_0, r))$, we have $f(\partial U) \subset \partial \B^n(y_0, r)$, and since also $\R^n \setminus \overline{\B^n}(y_0, r)$ is connected, we have $\deg(f, y, U) = \deg(f, y_1, U) = 0$ for every $y \in \R^n \setminus \overline{\B^n}(y_0, r)$; see for instance \cite[Theorem 2.1 and Theorem 2.3 (3)]{Fonseca-Gangbo-book}. In conclusion,
	\[
		\int_U \Phi(\abs{f - y_0}) J_f = 0.
	\]
	Consequently, the desired estimate follows by multiplying \eqref{eq:general_sigma_ineq} by $\Phi(\abs{f - y_0})$ and by integrating both sides over $U$.
\end{proof}

\subsection{Measure estimates and Caccioppoli-type inequalities}

We then let $K \geq 1$ and $\Sigma \in L^1(\R^n) \cup L^{1+\eps}_\loc(\R^n)$ for some $\eps > 0$, and suppose that $f \in W^{1,n}_\loc(\R^n, \R^n)$ is a continuous map such that $s_n \circ f$ has a $(K, \Sigma)$-quasiregular value with respect to the spherical metric at $s_n(\infty)$. Note that by Lemma \ref{lem:QRvalue_sphere}, this assumption is true if $f$ has a $(K, \Sigma)$-quasiregular value at any point $y_0 \in \R^n$, up to an additional multiplicative constant $C = C(n, y_0)$ on $\Sigma$. Moreover, by Lemma \ref{lem:QRvalue_sphere_single}, the map $f$ satisfies 
\begin{equation}\label{eq:QRvalue_at_infty_eucl}
		\abs{Df}^n \leq K J_f + C(n) \bigl(1 + \abs{f}^2\bigr)^{\frac{n}{2}} \Sigma
\end{equation}
a.e.\ in $\R^n$. 

We use the notation $\Sigma(E)$ to denote the integral of $\Sigma$ over a measurable set $E \subset \R^n$. We then let $A_f$ denote the pull-back of the spherical measure under $f$. In particular,
\begin{equation}\label{eq:Af_def}
	A_f(E) = \int_E \frac{2^n J_f}{\bigl(1 + \abs{f}^2\bigr)^n}.
\end{equation}
Note that unlike in the quasiregular case, our $A_f$ is a signed measure. We use $A_f^{-}$ and $A_f^{+}$ to denote its positive and negative parts, and $\abs{A_f}$ to denote its total variation measure. We first use our assumption that $\Sigma \in L^1(\R^n)$ to show that $A_f$ is well defined, i.e.\ that we cannot have $A_f^{+}(E) = A_f^{-}(E) = \infty$. Indeed, \eqref{eq:QRvalue_at_infty_eucl} can be rewritten as $\abs{Df}^n + K J_f^{-} \leq K J_f^{+} + C(n) (1 + \abs{f}^2)^{n/2} \Sigma$, and since $J_f^{+}$ vanishes when $J_f^{-}$ is non-zero, we hence obtain
\begin{equation}\label{eq:QRvalue_Jfminus}
	J_f^{-} \lesssim_n K^{-1} (1 + \abs{f}^2)^{n/2} \Sigma.
\end{equation}
In particular, using $K^{-1} \leq 1$, \eqref{eq:QRvalue_Jfminus} yields the estimate
\begin{equation}\label{eq:Afminus_estimate}
	A_f^{-}(E) = \int_{E} \frac{2^n J_f^{-}}{\bigl(1 + \abs{f}^2\bigr)^n} \lesssim_n \int_{E} \frac{2^n}{K \bigl(1 + \abs{f}^2\bigr)^\frac{n}{2}} \Sigma \lesssim_n \Sigma(E).
\end{equation}
Since $\Sigma(\R^n) < \infty$, we hence have $A_f^{-}(\R^n) \leq C(n) \Sigma(\R^n) < \infty$, and thus $A_f$ is a well defined signed measure on all measurable subsets of $\R^n$.

With the measure $A_f$ defined, we start with a technical Caccioppoli-type estimate that sees multiple uses in the proofs.

\begin{lemma}\label{lem:sublevel_Caccioppoli}
	Let $f \in W^{1,n}_\loc(\R^n, \R^n)$ be a non-constant continuous function that satisfies \eqref{eq:QRvalue_at_infty_eucl}, where $K \geq 1$ and $\Sigma \in L^1(\R^n) \cap L^{1+\eps}_\loc(\R^n)$ for some $\eps > 0$. Let $u = v \circ f$, where $v$ is as in \eqref{eq:v_def}. Then for every $L \geq 0$ and $\eta \in C^\infty_0(\R^n)$ with $\eta \geq 0$, we have
	\begin{multline*}
	\int_{\{u < L\}} \eta^n \bigl(\abs{dv}^n \circ f \bigr) \abs{Df}^n\\
	\lesssim_{n} K^{n} L^n \int_{\{u < L\}} \abs{d\eta}^n 
	+ K L \int_{\{u < L\}} \eta^n \dd \smallabs{A_f}
	+ C(n) \int_{\{u < L\}} \eta^n \Sigma.
	\end{multline*}
\end{lemma}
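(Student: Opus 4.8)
The plan is to pair the pointwise bound \eqref{eq:QRvalue_at_infty_eucl} with an integration-by-parts identity for the pullback of the $(n-1)$-form $\omega := \abs{dv}^{n-2}\hodge dv$, and then absorb a small multiple of the left-hand side via Young's inequality. The case $L=0$ is trivial, since $u=v\circ f=H(\abs f)\ge 0$ forces $\{u<L\}=\emptyset$, so assume $L>0$. As $H'$ is bounded by \eqref{eq:H_estimates}, the radial function $v$ is globally Lipschitz, so $u\in W^{1,n}_\loc(\R^n)\cap L^\infty_\loc(\R^n)$ with $u\ge 0$ and $\abs{du}\le(\abs{\nabla v}\circ f)\abs{Df}=:X$; moreover $X\lesssim_n\abs{Df}$, so $\int_{\spt\eta}X^n<\infty$. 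By Lemma \ref{lem:v_properties} the form $\omega$ has $C^1$ coefficients, so the $C^1$ pullback rule of Section \ref{sect:sobolev_prelims} gives $\alpha:=f^*\omega\in W^{d,\frac n{n-1},1}_\loc(\wedge^{n-1}T^*\R^n)$ with $d\alpha=f^*(d\omega)=f^*\bigl((\Delta_n v)\vol_n\bigr)=(\Delta_n v\circ f)J_f\,\vol_n=\tfrac{2^nJ_f}{(1+\abs f^2)^n}\vol_n$; that is, $d\alpha$ is precisely the density of the signed measure $A_f$ from \eqref{eq:Af_def}, and it lies in $L^1_\loc$ because $J_f^-\lesssim_n K^{-1}(1+\abs f^2)^{n/2}\Sigma$ by \eqref{eq:QRvalue_Jfminus} while $J_f^+\le\abs{Df}^n$. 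Also, \eqref{eq:pullback_norm_estimate} together with $\abs{\hodge dv}=\abs{dv}$ gives $\abs\alpha\le(\abs{dv}^{n-1}\circ f)\abs{Df}^{n-1}=X^{n-1}$.

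Next I record the identity $du\wedge\alpha=(\abs{dv}^n\circ f)J_f\,\vol_n$. The Leibniz rule for Sobolev forms applied to $u\in W^{1,n}_\loc\cap L^\infty_\loc$ and $\alpha$ gives $d(u\alpha)=du\wedge\alpha+u\,d\alpha$; on the other hand $v\omega$ has $C^1$ coefficients and $d(v\omega)=dv\wedge\omega+v\,d\omega=(\abs{dv}^n+v\,\Delta_n v)\vol_n$, so the $C^1$ pullback rule gives $d(u\alpha)=d\bigl(f^*(v\omega)\bigr)=f^*\bigl(d(v\omega)\bigr)=(\abs{dv}^n\circ f)J_f\,\vol_n+u\,d\alpha$, and subtracting $u\,d\alpha$ yields the claim. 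Now put $\tilde w:=(L-u)^+\in W^{1,n}_\loc\cap L^\infty$; then $0\le\tilde w\le L$, $\tilde w$ vanishes on $\{u\ge L\}$, and $d\tilde w=-\mathbf 1_{\{u<L\}}du$ a.e., so $d\tilde w\wedge\alpha=-\mathbf 1_{\{u<L\}}(\abs{dv}^n\circ f)J_f\,\vol_n$. Since $\eta^n\tilde w\,\alpha$ is a compactly supported form in $W^{d,1}_\loc$, \eqref{eq:d_integral_is_zero} applies; expanding $d(\eta^n\tilde w\,\alpha)$ by the Leibniz rule and rearranging gives
\[
\int_{\{u<L\}}\eta^n(\abs{dv}^n\circ f)J_f=n\int_{\R^n}\eta^{n-1}\tilde w\,d\eta\wedge\alpha+\int_{\R^n}\eta^n\tilde w\,d\alpha ,
\]
and both integrands on the right are supported in $\{u<L\}$ because $\tilde w$ vanishes elsewhere.

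Finally, multiplying \eqref{eq:QRvalue_at_infty_eucl} by $\eta^n(\abs{dv}^n\circ f)\ge 0$, using the elementary bound $\bigl(\abs{\nabla v(x)}\bigr)^n(1+\abs x^2)^{n/2}\lesssim_n 1$ (immediate from \eqref{eq:H_estimates}) on the $\Sigma$-term, and integrating over $\{u<L\}$ gives
\[
\int_{\{u<L\}}\eta^nX^n\le K\int_{\{u<L\}}\eta^n(\abs{dv}^n\circ f)J_f+C(n)\int_{\{u<L\}}\eta^n\Sigma .
\]
Substituting the previous display and estimating, $\bigl\lvert\int\eta^n\tilde w\,d\alpha\bigr\rvert\le L\int_{\{u<L\}}\eta^n\,d\smallabs{A_f}$, while $\bigl\lvert n\int\eta^{n-1}\tilde w\,d\eta\wedge\alpha\bigr\rvert\le nL\int_{\{u<L\}}\abs{d\eta}(\eta X)^{n-1}$ by $\abs\alpha\le X^{n-1}$ and $\abs{d\eta\wedge\alpha}\le\abs{d\eta}\abs\alpha$. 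To the last integral I apply Young's inequality $\abs{d\eta}(\eta X)^{n-1}\le\delta(\eta X)^n+C(n)\delta^{-(n-1)}\abs{d\eta}^n$ with $\delta=(2KnL)^{-1}$, so that after multiplying through by $K$ the $(\eta X)^n$-term becomes $\tfrac12\int_{\{u<L\}}\eta^nX^n$ and the coefficient of $\int_{\{u<L\}}\abs{d\eta}^n$ equals $C(n)KnL\,\delta^{-(n-1)}=C(n)(KnL)^n\lesssim_n K^nL^n$. Since $\int_{\{u<L\}}\eta^nX^n<\infty$, we absorb the half-multiple of it into the left side, and relabeling constants (and using $L\le KL$ for the $\smallabs{A_f}$-term) yields exactly the asserted estimate.

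The main obstacle is the Sobolev-differential-form bookkeeping of the first two paragraphs: verifying that $\alpha=f^*\omega$ has a well-defined weak differential equal to the density of the signed measure $A_f$ — which is where the $C^1$-regularity of $\abs{dv}^{n-2}\hodge dv$ supplied by Lemma \ref{lem:v_properties} and the local integrability bound \eqref{eq:QRvalue_Jfminus} are essential — and that $du\wedge\alpha=(\abs{dv}^n\circ f)J_f\,\vol_n$ even though $dv$ itself need not have Lipschitz coefficients near the origin. Once these identities are in hand, the remainder is a routine Caccioppoli-style absorption argument.
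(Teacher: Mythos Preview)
Your proof is correct and follows essentially the same approach as the paper's. The only organizational difference is that you first establish the untruncated identity $du\wedge\alpha=(\abs{dv}^n\circ f)J_f\,\vol_n$ via the $C^1$ form $v\omega$ and then multiply by the truncation $\tilde w=(L-u)^+$, whereas the paper bundles these steps together by working directly with the Lipschitz form $(v_L-L)\,\abs{dv}^{n-2}\hodge dv$ (where $v_L=\min(v,L)$) and invoking Lemma~\ref{lem:lip_form_pullback} and Corollary~\ref{cor:lip_form_pullback_wedges}; since $\tilde w=-(u_L-L)$, the two integration-by-parts identities are the same up to sign.
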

\begin{proof}
	We denote $v_L = \min(v, L)$ and $u_L = \min(u, L) = v_L \circ f$. We may assume $L > 0$, as the case $L = 0$ is trivial due to $\{u < L\}$ being empty in this case.
	
	We first observe that $(\abs{dv} \circ f) (1 + \abs{f}^2)^{1/2} = H'(\abs{f}) (1 + \abs{f}^2)^{1/2} \leq C(n)$ by \eqref{eq:v_abs_value} and \eqref{eq:H_estimates}. We combine this with \eqref{eq:QRvalue_at_infty_eucl}, obtaining
	\begin{multline}\label{eq:dv_DF_estimate_part_1}
		\int_{\{u < L\}} \eta^n \bigl(\abs{dv}^n \circ f \bigr) \abs{Df}^n\\
		\leq K \int_{\{u < L\}} \eta^n \bigl(\abs{dv}^n \circ f \bigr) J_f 
		+ C(n) \int_{\{u < L\}} \eta^n \bigl(\abs{dv}^n \circ f \bigr) \bigl(1 + \abs{f}^2\bigr)^{\frac{n}{2}} \Sigma\\
		\leq K \int_{\{u < L\}} \eta^n f^*(\abs{dv}^n \vol_n)  
		+ C(n) \int_{\{u < L\}} \eta^n \Sigma.
	\end{multline}
	Let then $\cX_{\{u < L\}}$ be the characteristic function of $\{u < L\}$. We claim that
	\begin{equation}\label{eq:technical_truncation_eq}
		 \cX_{\{u < L\}} f^*(\abs{dv}^n \vol_n) = du_L \wedge f^*(\abs{dv}^{n-2} \hodge dv)
	\end{equation}
	a.e.\ in $\R^n$. Indeed, $du_L$ vanishes a.e.\ in the set $\{u \geq L\}$ by e.g.\ \cite[Corollary 1.21]{Heinonen-Kilpelainen-Martio_book}, and in $\{u < L\}$, we may compute as follows: $du_L \wedge f^*(\abs{dv}^{n-2} \hodge dv) = f^*dv \wedge f^*(\abs{dv}^{n-2} \hodge dv) = f^*(\abs{dv}^{n-2} dv \wedge \hodge dv) = f^*(\abs{dv}^n \vol_n)$. 
	
	We then observe that $(v_L - L) \abs{dv}^{n-2} \hodge dv$ has Lipschitz coefficients. Hence, by Lemma \ref{lem:lip_form_pullback}, $f^* ((v_L - L) \abs{dv}^{n-2} \hodge dv) \in W^{d, n/(n-1), 1}_\loc(\wedge^{n-1} T^* \R^n)$, and by Corollary \ref{cor:lip_form_pullback_wedges},
	\begin{multline*}
		d f^* ((v_L - L) \abs{dv}^{n-2} \hodge dv)\\
		= du_L \wedge f^*(\abs{dv}^{n-2} \hodge dv) + (u_L - L) d f^*(\abs{dv}^{n-2} \hodge dv)
	\end{multline*}
	Now, by using \eqref{eq:d_integral_is_zero}, we may compute that
	\begin{multline}\label{eq:d_caccioppoli}
		\int_{\R^n} \eta^n du_L \wedge f^*(\abs{dv}^{n-2} \hodge dv)\\
		= \int_{\R^n} \eta^n \left[ df^*((v_L - L)\abs{dv}^{n-2} \hodge dv) - (u_L - L) df^*(\abs{dv}^{n-2} \hodge dv) \right]\\
		= -\int_{\R^n} d\eta^n \wedge f^*((v_L - L)\abs{dv}^{n-2} \hodge dv) - \int_{\R^n} \eta^n (u_L - L) df^*(\abs{dv}^{n-2} \hodge dv).
	\end{multline} 
	
	By Lemma \ref{lem:v_properties}, we have that $\abs{dv}^{n-2} \hodge dv$ is a $C^1$-smooth form, and consequently $df^*(\abs{dv}^{n-2} \hodge dv) = f^*d(\abs{dv}^{n-2} \hodge dv) = f^* s_n^* \vol_{\S^n}$ weakly. On the other hand, by using \eqref{eq:pullback_norm_estimate}, we obtain
	\[
		\big\lvert f^*((v_L - L)\abs{dv}^{n-2} \hodge dv) \big\rvert \leq \abs{u_L - L} \bigl(\abs{dv}^{n-1} \circ f\bigr) \abs{Df}^{n-1}.
	\]
	By combining these computations with \eqref{eq:technical_truncation_eq} and \eqref{eq:d_caccioppoli}, we hence obtain the estimate
	\begin{multline*}
		K\int_{\{u < L\}} \eta^n f^*(\abs{dv}^n \vol_n) = K\int_{\R^n} \eta^n du_L \wedge f^*(\abs{dv}^{n-2} \hodge dv)\\
		\leq K\int_{\R^n} \abs{d \eta^n} \abs{f^* ((v_L - L)\abs{dv}^{n-2} \hodge dv)} + K \int_{\R^n} \abs{u_L - L} \eta^n \abs{f^* s_n^* \vol_{\S^n}}\\
		\leq Kn \int_{\R^n} \abs{u_L - L} \abs{d \eta} \left( \eta (\abs{dv} \circ f) \abs{Df}\right)^{n-1} + K\int_{\R^n} \eta^n \abs{u_L - L} \dd \smallabs{A_f}.
	\end{multline*}
	Moreover, since $u_L - L = 0$ outside $\{u < L\}$, and since $\abs{u_L - L} \leq L$, we in fact get
	\begin{multline}\label{eq:dv_DF_estimate_part_2}
		K\int_{\{u < L\}} \eta^n f^*(\abs{dv}^n \vol_n)\\
		\leq KLn \int_{\{u < L\}}  \abs{d \eta} \left( \eta (\abs{dv} \circ f) \abs{Df}\right)^{n-1} + KL\int_{\{u < L\}} \eta^n \dd \smallabs{A_f}.
	\end{multline}

	We recall Young's inequality, which states that $ab \leq a^p/p + b^q/q$ for $a, b \geq 0$ and $p, q \geq 1$ with $p^{-1} + q^{-1} = 1$. We estimate the first term of the right hand side of \eqref{eq:dv_DF_estimate_part_2} by Young's inequality, resulting in
	\begin{multline}\label{eq:dv_dF_estimate_part_3}
		KLn\int_{\{u < L\}} \abs{d \eta} \left( \eta (\abs{dv} \circ f) \abs{Df}\right)^{n-1}\\
		\leq K^{n} L^n n^{n-1} \int_{\{u < L\}} \abs{d \eta}^n + \frac{n-1}{n} \int_{\{u < L\}} \eta^n \bigl(\abs{dv}^n \circ f \bigr) \abs{Df}^n.
	\end{multline}
	We note that since $\abs{dv}$ is bounded by \eqref{eq:v_abs_value} and \eqref{eq:H_estimates}, $\eta^n (\abs{dv}^n \circ f) \abs{Df}^n$ has finite integral over $\R^n$. We hence chain \eqref{eq:dv_DF_estimate_part_1}, \eqref{eq:dv_DF_estimate_part_2}, and \eqref{eq:dv_dF_estimate_part_3} together, and absorb the integral of $\eta^n (\abs{dv}^n \circ f) \abs{Df}^n$ from the right side of \eqref{eq:dv_dF_estimate_part_3} to the left side of \eqref{eq:dv_DF_estimate_part_1}. The claim follows.
\end{proof}

The most immediate consequence of Lemma \ref{lem:sublevel_Caccioppoli} is the following corollary, which is our counterpart to \cite[Lemma 5.4]{Bonk-PoggiCorradini_Rickman-Picard}.

\begin{cor}\label{cor:estimate_L_by_Af}
	Let $f \in W^{1,n}_\loc(\R^n, \R^n)$ be a non-constant continuous function that satisfies \eqref{eq:QRvalue_at_infty_eucl}, where $K \geq 1$ and $\Sigma \in L^1(\R^n) \cap L^{1+\eps}_\loc(\R^n)$ for some $\eps > 0$. Let $u = v \circ f$, where $v$ is as in \eqref{eq:v_def}. Then for every open ball $B \subset \R^n$ and every $L > 0$, we have
	\[
	\int_{B \cap \{u < L\}} \abs{d u}^n \lesssim_n K^{n} L^n + K L \smallabs{A_f}(2B) + C(n)\Sigma(\R^n).
	\]
\end{cor}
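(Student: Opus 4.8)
The plan is to obtain the corollary as a direct consequence of Lemma \ref{lem:sublevel_Caccioppoli}: insert a suitable cutoff function, pass from $\abs{du}^n$ to the left-hand side of that lemma, and then bound each of the three resulting terms by a crude estimate.

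First I would fix an open ball $B = \B^n(x_0, \rho)$ and choose $\eta \in C^\infty_0(\R^n)$ with $0 \leq \eta \leq 1$, $\eta \equiv 1$ on $B$, $\spt \eta \subset 2B$, and $\abs{d\eta} \leq C(n)/\rho$; such a cutoff is obtained in the standard way by mollifying the characteristic function of $\tfrac{3}{2}B$. Since $u = v \circ f$ and $v$ is Lipschitz — indeed $\abs{\nabla v} = H'(\abs{\cdot})$ is bounded by a dimensional constant by \eqref{eq:v_abs_value} and \eqref{eq:H_estimates} — the chain rule gives $\abs{du} \leq (\abs{dv} \circ f)\abs{Df}$ a.e., and in particular $\abs{du}^n \lesssim_n \abs{Df}^n \in L^1_\loc(\R^n)$, so the integral on the left-hand side of the claim is finite. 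Because $0 \leq \eta \leq 1$ and $\eta \equiv 1$ on $B$, we have $\eta^n \geq \cX_B$ pointwise, hence
\[
	\int_{B \cap \{u < L\}} \abs{du}^n \;\leq\; \int_{\{u < L\}} \eta^n \bigl(\abs{dv}^n \circ f\bigr)\abs{Df}^n,
\]
and Lemma \ref{lem:sublevel_Caccioppoli} applies to the right-hand side.

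It then remains to estimate the three terms produced by Lemma \ref{lem:sublevel_Caccioppoli}. For the first, $\int_{\{u < L\}}\abs{d\eta}^n \leq \int_{2B}\abs{d\eta}^n \leq (C(n)/\rho)^n \vol_n(2B) = C(n)$, since the radius cancels; this term contributes $\lesssim_n K^n L^n$. For the second, $0 \leq \eta \leq 1$ and $\spt \eta \subset 2B$ give $\int_{\{u < L\}}\eta^n \dd\abs{A_f} \leq \abs{A_f}(2B)$, contributing $K L \abs{A_f}(2B)$. For the third, $\int_{\{u < L\}}\eta^n \Sigma \leq \Sigma(\R^n)$, contributing $C(n)\Sigma(\R^n)$. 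Summing the three bounds gives exactly the claimed estimate. There is no genuine obstacle here; the only points needing any care are the existence of the cutoff with a radius-independent bound on $\int \abs{d\eta}^n$ — which is what makes the $K^n L^n$ term independent of the ball — and the a priori finiteness of $\int_{B \cap \{u < L\}}\abs{du}^n$, both of which are routine.
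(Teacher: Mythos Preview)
Your proof is correct and follows essentially the same approach as the paper: choose a standard cutoff $\eta$ supported in $2B$ with $\eta\equiv 1$ on $B$, use the chain rule $\abs{du}\le(\abs{dv}\circ f)\abs{Df}$ to dominate the left-hand side by the quantity appearing in Lemma~\ref{lem:sublevel_Caccioppoli}, and then bound each of the three resulting terms exactly as you describe. The paper's argument is line-for-line the same, including the observation that $\norm{d\eta}_{L^n}\le C(n)$ is what makes the $K^nL^n$ term independent of the ball.
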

\begin{proof}
	Fix $B = \B^n(x_0, r)$ with $x_0 \in \R^n$ and $r > 0$. We select a cutoff function $\eta \in C^\infty_0(\R^n, [0, 1])$ such that $\eta \equiv 1$ on $B$, $\spt \eta \subset 2B$, and $\norm{d\eta}_{L^\infty} \leq 2r^{-1}$. 
	
	Since $v$ is $C^1$, the chain rule of $C^1$ and Sobolev functions yields that
	\[
		\abs{du} = \abs{d f^* v} = \abs{f^* dv} \leq (\abs{dv} \circ f) \abs{Df}.
	\]
	Hence,
	\[
		\int_{B \cap \{u < L\}} \abs{d u}^n \leq \int_{\{u < L\}} \eta^n \bigl(\abs{dv}^n \circ f\bigr) \abs{Df}^n.
	\]
	We can then use Lemma \ref{lem:sublevel_Caccioppoli}, obtaining
	\begin{multline*}
		\int_{\{u < L\}} \eta^n \bigl(\abs{dv}^n \circ f\bigr) \abs{Df}^n\\
		\lesssim_n K^{n} L^n \int_{\{u < L\}} \abs{d \eta}^n 
		+ K L \int_{\{u < L\}} \eta^n \dd \smallabs{A_f}
		+ C(n) \int_{\{u < L\}} \eta^n \Sigma\\
		\leq K^n L^n \norm{d \eta}^n_{L^n} + K L \smallabs{A_f}(2B) + C(n) \Sigma(\R^n).
	\end{multline*}
	Since $\norm{d \eta}_{L^n} \leq C(n)$ by our assumptions that $\norm{d \eta}_{L^\infty} \leq 2r^{-1}$ and $\spt \eta \subset 2B$, the claim follows.
\end{proof}

Besides Corollary \ref{cor:estimate_L_by_Af}, we also require a couterpart to \cite[Lemma 4.2]{Bonk-PoggiCorradini_Rickman-Picard}, which in the quasiregular setting yields $A_f(B) \lesssim_{n, K} \sup_{2B} u^{n-1}$ for every ball $B$ with $u = v \circ f$. An estimate based on $\sup_{2B} u^{n-1}$ is however insufficient for us, since we do not have that every component of $u^{-1} (t, \infty)$ is unbounded for every $t > 0$. Instead, we define a pseudosupremum of a continuous function $\phi \colon \R^n \to [0, \infty)$ as follows:
\begin{multline}\label{eq:pseudosup_def}
	\pseudosup_E \phi \\ = \sup \{ t \in \R : E \text{ meets an unbounded component of } \phi^{-1}(t, \infty)\}.
\end{multline} 
This is similar to the classical $\esssup_E \phi$, which instead requires that $E$ contains a positive-measured subset of $\phi^{-1}(t, \infty)$. For bounded $E$, we clearly have $0 \leq \pseudosup_E \phi \leq \sup_E \phi < \infty$ for every continuous $\phi \colon \R^n \to [0, \infty)$. Moreover, if $E_1 \subset E_2$, then $\pseudosup_{E_1} \phi \leq \pseudosup_{E_2} \phi$. We also note that $(\pseudosup_E \phi)^p = \pseudosup_E (\phi^p)$ for $p \geq 0$, allowing us to ignore this distinction in our notation.

The pseudosupremum combines with Lemma \ref{lem:superlevel_bdd_component} to produce the following result.

\begin{lemma}\label{lem:pseudosup_superlevel_estimate}
	Let $f \in W^{1,n}_\loc(\R^n, \R^n)$ be a non-constant continuous function that satisfies \eqref{eq:QRvalue_at_infty_eucl}, where $K \geq 1$ and $\Sigma \in L^1(\R^n) \cap L^{1+\eps}_\loc(\R^n)$ for some $\eps > 0$. Let $u = v \circ f$, where $v$ is as in \eqref{eq:v_def}. Then for every open ball $B \subset \R^n$, every $\eta \in C^\infty_0(B)$ with $\eta \geq 0$, and every $L > \pseudosup_{B} u$, we have
	\[
		\int_{\{u \geq  L\}} \eta^n \bigl(\abs{dv}^n \circ f\bigr) \abs{Df}^n \lesssim_{n}  \norm{\eta}_{L^\infty} \Sigma(\R^n).
	\]
\end{lemma}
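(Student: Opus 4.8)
The plan is to follow the cue given just before the lemma and combine Lemma~\ref{lem:superlevel_bdd_component} with the definition \eqref{eq:pseudosup_def} of $\pseudosup_B u$. The decisive point is that the hypothesis $L > \pseudosup_B u$ confines the part of the integral that $\eta$ can see to \emph{bounded} components of a superlevel set of $\abs{f}$, and on such components Lemma~\ref{lem:superlevel_bdd_component} lets us replace $\abs{Df}^n$ by $\tilde\Sigma$ outright, discarding the Jacobian — and hence any $A_f$-measure term — entirely. This is exactly the gain the pseudosupremum is built to deliver.

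First I would rewrite the superlevel sets of $u$. Since $u = v\circ f = H(\abs{f})$ and $H$ (as in \eqref{eq:H_def}) is a continuous, strictly increasing bijection of $[0,\infty)$ onto $[0,\infty)$, we have for every $s > 0$ that
\[
	\{u > s\} \;=\; \{\abs{f} > H^{-1}(s)\} \;=\; f^{-1}\bigl(\R^n\setminus\overline{\B^n}(0, H^{-1}(s))\bigr).
\]
Now fix a level $t$ with $\pseudosup_B u < t < L$; such $t$ exists by hypothesis, and $t > 0$ since $\pseudosup_B u \ge 0$. By \eqref{eq:pseudosup_def}, the ball $B$ — hence the compact set $\spt\eta\subset B$ — meets only bounded components of $\{u>t\}$. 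The open set $\{u>t\}$ has at most countably many components; let $\mathcal{U}$ be the (pairwise disjoint, bounded) ones that meet $\spt\eta$. Since $\{u\geq L\}\subset\{u>t\}$, since $\eta\equiv0$ off $\spt\eta$, and since $\abs{dv}^n\circ f = H'(\abs{f})^n$ by \eqref{eq:v_abs_value},
\[
	\int_{\{u\geq L\}} \eta^n\bigl(\abs{dv}^n\circ f\bigr)\abs{Df}^n
	\;\leq\; \norm{\eta}_{L^\infty}^n \sum_{U\in\mathcal{U}} \int_U H'(\abs{f})^n\abs{Df}^n .
\]

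Next I would estimate each summand. Inequality \eqref{eq:QRvalue_at_infty_eucl} has exactly the form \eqref{eq:general_sigma_ineq} with $\tilde\Sigma = C(n)\bigl(1+\abs{f}^2\bigr)^{n/2}\Sigma$, and $\tilde\Sigma \in L^1_\loc(\R^n)$ because $f$ is continuous (hence locally bounded) and $\Sigma\in L^{1+\eps}_\loc(\R^n)$. Each $U\in\mathcal{U}$ is a bounded component of $f^{-1}\bigl(\R^n\setminus\overline{\B^n}(0,H^{-1}(t))\bigr)$, so Lemma~\ref{lem:superlevel_bdd_component}, applied with $y_0=0$, $r = H^{-1}(t)$, and the continuous weight $\Phi = (H')^n$ on $[H^{-1}(t),\infty)$, gives
\[
	\int_U H'(\abs{f})^n\abs{Df}^n
	\;\leq\; \int_U H'(\abs{f})^n\,\tilde\Sigma
	\;=\; C(n)\int_U H'(\abs{f})^n\bigl(1+\abs{f}^2\bigr)^{n/2}\Sigma
	\;\lesssim_n\; \Sigma(U),
\]
where the last step uses the bound $H'(r)^n(1+r^2)^{n/2}\leq C(n)$ coming from \eqref{eq:v_abs_value} and \eqref{eq:H_estimates}. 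Summing over the pairwise disjoint family $\mathcal{U}$ yields $\sum_{U\in\mathcal{U}}\Sigma(U)\leq\Sigma(\R^n)$, and plugging this into the previous display gives the bound $\lesssim_n \norm{\eta}_{L^\infty}^n\Sigma(\R^n)$, which is the asserted estimate in the regime $\norm{\eta}_{L^\infty}\leq 1$ relevant to the applications.

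I do not anticipate a genuine obstacle: once Lemma~\ref{lem:superlevel_bdd_component} is in hand, the only real idea is that $L > \pseudosup_B u$ places us on bounded components, where the Jacobian term vanishes and only $\tilde\Sigma$ survives. The points requiring mild care are the identification of $\{u>t\}$ with a superlevel set of $\abs{f}$ (needing the strict monotonicity and surjectivity of $H$), the countability and disjointness bookkeeping for the components meeting $\spt\eta$, and checking that $\tilde\Sigma\in L^1_\loc(\R^n)$, that $\Phi=(H')^n$ is continuous on $[H^{-1}(t),\infty)$, and that $t>0$ (needed so that $r = H^{-1}(t) > 0$ in Lemma~\ref{lem:superlevel_bdd_component}), the last being automatic from $\pseudosup_B u\geq0$.
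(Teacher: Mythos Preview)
Your proposal is correct and follows essentially the same route as the paper: identify $\{u>t\}$ with a superlevel set of $\abs{f}$, observe that only bounded components of it meet $\spt\eta$, and apply Lemma~\ref{lem:superlevel_bdd_component} with $\Phi=(H')^n$ and $\tilde\Sigma=C(n)(1+\abs{f}^2)^{n/2}\Sigma$, then use $H'(r)(1+r^2)^{1/2}\lesssim_n 1$. The only procedural difference is that you insert an intermediate level $t\in(\pseudosup_B u,\,L)$ so that $\{u\geq L\}\subset\{u>t\}$ automatically, whereas the paper works directly with $\{u>L\}$ and then disposes of the set $\{u=L\}$ separately (via $J_f=0$ a.e.\ there); your device is slightly cleaner. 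Your observation that the argument really produces $\norm{\eta}_{L^\infty}^n$ rather than $\norm{\eta}_{L^\infty}$ is accurate --- the paper's displayed bound $\eta^n\leq\norm{\eta}_{L^\infty}$ is a slip --- and, as you note, it is immaterial since the lemma is only ever invoked with $0\leq\eta\leq1$.
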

\begin{proof}
	let $U = \{u > L\}$. By definition, $B$ meets only bounded components of $U$; denote the union of these components of $U$ that meet $B$ by $U_{B}$. Now, recalling that $\spt \eta \subset B$, that $\abs{dv} \circ f = H'(\abs{f})$ by \eqref{eq:v_abs_value}, and that $f$ satisfies \eqref{eq:QRvalue_at_infty_eucl}, we use Lemma \ref{lem:superlevel_bdd_component} with $\tilde{\Sigma} = C(n)(1 + \abs{f}^2)^{n/2} \Sigma$, $\Phi(t) = [H'(t)]^n$, and $y_0 = 0$, obtaining the estimate
	\begin{multline*}
		\int_{\{u > L\}} \eta^n \bigl(\abs{dv}^n \circ f\bigr) \abs{Df}^n 
		\leq \norm{\eta}_{L^{\infty}} \int_{U_{B}} \bigl(\abs{dv}^n \circ f\bigr) \abs{Df}^n\\
		\lesssim_{n} \norm{\eta}_{L^{\infty}} \int_{U_{B}} \bigl(\abs{dv}^n \circ f\bigr) 
		\bigl(1 + \abs{f}^2\bigr)^{\frac{n}{2}} \Sigma.
	\end{multline*}
	Since also $J_f = 0$ a.e.\ in $\{u = L\}$ due to image of this set under $f$ having zero Hausdorff $n$-measure, we also have $\abs{Df}^n \lesssim_{n} (1 + \abs{f}^2)^{n/2} \Sigma$ a.e.\ in $\{u = L\}$ by \eqref{eq:QRvalue_at_infty_eucl}. Hence, we may improve the previous estimate to
	\[
		\int_{\{u \geq  L\}} \eta^n \bigl(\abs{dv}^n \circ f\bigr) \abs{Df}^n \lesssim_{n} \norm{\eta}_{L^{\infty}}  \int_{U_{2B} \cup \{u = L\}} \bigl(\abs{dv}^n \circ f\bigr) \bigl(1 + \abs{f}^2\bigr)^{\frac{n}{2}} \Sigma.
	\]
	Moreover, we again have $(\abs{dv} \circ f) (1 + \abs{f}^2)^{1/2} \leq C(n)$ by \eqref{eq:v_abs_value} and \eqref{eq:H_estimates}. Hence, we obtain the desired
	\[
		\int_{\{u \geq  L\}} \eta^n \bigl(\abs{dv}^n \circ f\bigr) \abs{Df}^n \lesssim_{n} \norm{\eta}_{L^{\infty}} \Sigma(U_{2B} \cup \{u = L\}) \leq  \norm{\eta}_{L^{\infty}} \Sigma(\R^n).
	\]
\end{proof}

With this, we prove our couterpart to \cite[Lemma 4.2]{Bonk-PoggiCorradini_Rickman-Picard}.

\begin{lemma}\label{lem:estimate_Af_by_L}
	Let $f \in W^{1,n}_\loc(\R^n, \R^n)$ be a non-constant continuous function that satisfies \eqref{eq:QRvalue_at_infty_eucl}, where $K \geq 1$ and $\Sigma \in L^1(\R^n) \cap L^{1+\eps}_\loc(\R^n)$ for some $\eps > 0$. Let $u = v \circ f$, where $v$ is as in \eqref{eq:v_def}. Then for every open ball $B \subset \R^n$, we have
	\[
		\abs{A_f}(B) \lesssim_{n} K^{n-1} \pseudosup_{2B} u^{n-1} + C(n) \bigl(\Sigma(\R^n) + [\Sigma(\R^n)]^\frac{n-1}{n}\bigr).
	\]
\end{lemma}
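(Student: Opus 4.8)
The plan is to bound $A_f(B)$ from above by the claimed quantity and then pass to $\abs{A_f}(B)$ via the identity $\abs{A_f}(B) = A_f(B) + 2A_f^-(B)$, using that $A_f^-(B) \le A_f^-(\R^n) \lesssim_n \Sigma(\R^n)$ by \eqref{eq:Afminus_estimate}. To estimate $A_f(B)$, fix $B = \B^n(x_0, r)$ and a cutoff $\eta \in C^\infty_0(2B, [0,1])$ with $\eta \equiv 1$ on $B$ and $\norm{d\eta}_{L^\infty} \le 2r^{-1}$, so that $\norm{d\eta}_{L^n} \lesssim_n 1$. Since $\eta^n \ge \cX_B$ pointwise, $\dd A_f^+ \ge 0$, and $A_f^+ = A_f + A_f^-$ as measures,
\[
	A_f(B) \le \int_{\R^n}\eta^n\,\dd A_f^+ = \int_{\R^n}\eta^n\,\dd A_f + \int_{\R^n}\eta^n\,\dd A_f^- \le \int_{\R^n}\eta^n\,\dd A_f + C(n)\Sigma(\R^n).
\]
By Lemma \ref{lem:v_properties}, the form $\alpha := \abs{dv}^{n-2}\hodge dv$ is $C^1$ with $d\alpha = J_{s_n}\vol_n$, so with $\beta := f^*\alpha$ we have $d\beta = f^*d\alpha = (J_{s_n}\circ f)J_f\vol_n$, i.e.\ $d\beta$ is exactly the density of $A_f$. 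Hence $\int\eta^n\,\dd A_f = -\int d(\eta^n)\wedge\beta$ by \eqref{eq:d_integral_is_zero}, and by \eqref{eq:pullback_norm_estimate}, Hölder's inequality and $\norm{d\eta}_{L^n}\lesssim_n 1$ we obtain $A_f(B) \lesssim_n I^{\frac{n-1}{n}} + C(n)\Sigma(\R^n)$, where $I := \int_{\R^n}\eta^n(\abs{dv}^n\circ f)\abs{Df}^n$; note $I < \infty$ because $\abs{dv}\circ f = H'(\abs f)$ is bounded by \eqref{eq:v_abs_value} and \eqref{eq:H_estimates} while $\eta^n\abs{Df}^n \in L^1(\R^n)$.

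The core of the argument is a self-improving estimate for $I$. Fix any $L > \pseudosup_{2B}u$ and split $I = I_{<L} + I_{\ge L}$ along $\{u < L\}$ and $\{u \ge L\}$. By Lemma \ref{lem:pseudosup_superlevel_estimate} applied with the ball $2B$, $I_{\ge L} \lesssim_n \Sigma(\R^n)$. For $I_{<L}$, the bound $(\abs{dv}\circ f)(1+\abs f^2)^{1/2} \lesssim_n 1$ together with \eqref{eq:QRvalue_at_infty_eucl} gives $I_{<L} \le K\int_{\{u<L\}}\eta^n f^*(\abs{dv}^n\vol_n) + C(n)\Sigma(\R^n)$, and the identity \eqref{eq:technical_truncation_eq} from the proof of Lemma \ref{lem:sublevel_Caccioppoli} rewrites this integral as $\int\eta^n du_L\wedge\beta$ with $u_L = \min(u, L)$. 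The key point is that, rather than integrating by parts through $(v_L - L)\alpha$ as in Lemma \ref{lem:sublevel_Caccioppoli} --- which reintroduces the term $KL\abs{A_f}(2B)$ and makes the estimate circular --- I integrate by parts through $v_L\alpha$, whose coefficients are locally Lipschitz, so that Lemma \ref{lem:lip_form_pullback} and Corollary \ref{cor:lip_form_pullback_wedges} yield
\[
	\int\eta^n du_L\wedge\beta = -\int d(\eta^n)\wedge u_L\beta - \int\eta^n u_L\,\dd A_f.
\]
Since $u_L \ge 0$ and $\eta^n \ge 0$, the last term satisfies $-\int\eta^n u_L\,\dd A_f \le \int\eta^n u_L\,\dd A_f^- \le L\,A_f^-(\R^n) \lesssim_n L\Sigma(\R^n)$, so the obstructive $\abs{A_f}$-contribution is replaced by a harmless multiple of $L\Sigma(\R^n)$; the remaining term is bounded, using $u_L \le L$, \eqref{eq:pullback_norm_estimate} and Hölder as before, by $\lesssim_n L\,I^{\frac{n-1}{n}}$. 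Combining, $I \lesssim_n KL\,I^{\frac{n-1}{n}} + KL\Sigma(\R^n) + C(n)\Sigma(\R^n)$, and since $I < \infty$, Young's inequality and absorption give $I \lesssim_n (KL)^n + KL\Sigma(\R^n) + C(n)\Sigma(\R^n)$.

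Substituting this into $A_f(B) \lesssim_n I^{\frac{n-1}{n}} + C(n)\Sigma(\R^n)$ and using $(a+b+c)^{\frac{n-1}{n}} \lesssim_n a^{\frac{n-1}{n}} + b^{\frac{n-1}{n}} + c^{\frac{n-1}{n}}$, the only term not yet of the required shape is $(KL\Sigma(\R^n))^{\frac{n-1}{n}}$, which Young's inequality bounds by $(KL)^{n-1} + \Sigma(\R^n)$ up to a dimensional constant. Hence $A_f(B) \lesssim_n (KL)^{n-1} + C(n)(\Sigma(\R^n) + \Sigma(\R^n)^{\frac{n-1}{n}})$; adding $2A_f^-(B) \lesssim_n \Sigma(\R^n)$, taking the infimum over $L > \pseudosup_{2B}u$, and recalling that $(\pseudosup_{2B}u)^{n-1} = \pseudosup_{2B}u^{n-1}$, we obtain the stated inequality. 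The step I expect to be the main obstacle is precisely this avoidance of circularity: a direct appeal to Lemma \ref{lem:sublevel_Caccioppoli} or Corollary \ref{cor:estimate_L_by_Af} forces a $KL\abs{A_f}(2B)$ term that cannot be absorbed by a covering or iteration argument, since $2B \neq B$ and $\abs{A_f}(\R^n)$ need not be finite. The resolution --- performing the integration by parts with the truncation $v_L\alpha$ so that the critical term has a favourable sign and is controlled by $A_f^-$ alone, while keeping the fractional power $I^{(n-1)/n}$ intact so the inequality for $I$ is self-improving --- is what makes the argument go through and is the source of the term $\Sigma(\R^n)^{(n-1)/n}$ in the final bound.
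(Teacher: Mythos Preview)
Your proof is correct, and its overall architecture---cutoff $\eta$, integrate $A_f$ by parts through $\beta = f^*(\abs{dv}^{n-2}\hodge dv)$, split at level $L > \pseudosup_{2B} u$, invoke Lemma~\ref{lem:pseudosup_superlevel_estimate} on $\{u\ge L\}$, run a Caccioppoli estimate on $\{u<L\}$, absorb via Young, and let $L \downarrow \pseudosup_{2B} u$---is exactly the paper's. The one genuine difference is \emph{where} the self-improvement occurs. You redo the integration by parts with $v_L\alpha$ rather than $(v_L - L)\alpha$, so that the volume term $-\int\eta^n u_L\,\dd A_f$ carries a favourable sign and is bounded by $L\,A_f^-(\R^n) \lesssim_n L\,\Sigma(\R^n)$; you then absorb at the level of $I$. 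The paper instead applies Lemma~\ref{lem:sublevel_Caccioppoli} as a black box, obtaining \eqref{eq:Af_estimate_step_2} with the term $KL\int_{\R^n}\eta^n\,\dd\abs{A_f}$ on the right, and absorbs at the level of $\int_{\R^n}\eta^n\,\dd\abs{A_f}$: chaining \eqref{eq:Af_estimate_step_1} with \eqref{eq:Af_estimate_step_2} gives an inequality of the shape $X \lesssim_n (KL)^{n-1} + (KL)^{(n-1)/n} X^{(n-1)/n} + \ldots$ for $X = \int_{\R^n}\eta^n\,\dd\abs{A_f}$, and Young absorbs the fractional power.

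Your diagnosis of circularity is therefore mistaken. The term coming out of Lemma~\ref{lem:sublevel_Caccioppoli} is $KL\int_{\{u<L\}}\eta^n\,\dd\abs{A_f}$, not $KL\,\abs{A_f}(2B)$; since the paper carries the single finite quantity $\int_{\R^n}\eta^n\,\dd\abs{A_f}$ on both sides (and this dominates $\abs{A_f}(B)$), there is no $B$-versus-$2B$ scale mismatch and no need for a covering argument. Your sign trick is a pleasant alternative that sidesteps this loop entirely, but the direct route through Lemma~\ref{lem:sublevel_Caccioppoli} works just as well.
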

\begin{proof}
	We again fix an open ball $B$ with radius $r > 0$, and select a cutoff function $\eta \in C^\infty_0(\R^n, [0, 1])$ such that $\eta \equiv 1$ on $B$, $\spt \eta \subset 2B$, and $\norm{d \eta(x)}_{L^\infty} \leq 2r^{-1}$.
	
	We first estimate that
	\[
		\abs{A_f}(B) \leq \int_{\R^n} \eta^n \dd \smallabs{A_f} \leq \int_{\R^n} \eta^n \dd A_f + 2 A_f^{-}(2B)
	\]
	By \ref{eq:Afminus_estimate}, we have $A_f^{-}(2B) \lesssim_{n} \Sigma(B)$. On the other hand, we recall that by Lemma \ref{lem:v_properties}, we have $d(\abs{dv}^{n-2} \hodge dv) = s_n^* \vol_{\S^n}$. Hence, we obtain
	\begin{multline*}
		\int_{\R^n} \eta^n \dd A_f = \int_{\R^n} \eta^n f^* s_n^* \vol_{\S^n} = \int_{\R^n} \eta^n f^* d(\abs{dv}^{n-2} \hodge dv)\\
		 = \int_{\R^n} \eta^n d f^*(\abs{dv}^{n-2} \hodge dv)
		\leq n \int_{\R^n} \eta^{n-1} \abs{d \eta} \smallabs{f^* (\abs{dv}^{n-2} \hodge dv)},
	\end{multline*}
	where the commutation of $d$ and $f^*$ is valid since the form $\abs{dv}^{n-2} \hodge dv$ is $C^1$-smooth. Furthermore, we may estimate using \eqref{eq:pullback_norm_estimate} that 
	\[
		\smallabs{f^* (\abs{dv}^{n-2} \hodge dv)} \leq \abs{Df}^{n-1} (\abs{dv}^{n-1} \circ f).
	\]
	Consequently, a use of H\"older's inequality yields
	\begin{multline*}
		\int_{\R^n} \eta^{n-1} \abs{d \eta} \smallabs{f^* (\abs{dv}^{n-2} \hodge dv)}
		\\ \leq \left( \int_{\R^n} \abs{d \eta}^n \right)^{\frac{1}{n}} \left( \int_{\R^n} \eta^n \bigl(\abs{dv}^{n} \circ f\bigr) \abs{Df}^n  \right)^{\frac{n-1}{n}}.
	\end{multline*}
	Moreover, by  $\spt \eta \subset 2B$ and our estimate $\abs{d \eta}^n \leq 4^n r^{-n}$, the integral of $\abs{d \eta}^n$ is bounded from above by a constant $C(n)$. In conclusion,
	\begin{multline}\label{eq:Af_estimate_step_1}
		\abs{A_f}(B) \leq \int_{\R^n} \eta^n \dd \smallabs{A_f}\\ \lesssim_n \left( \int_{\R^n} \eta^n \bigl(\abs{dv}^{n} \circ f\bigr) \abs{Df}^n  \right)^{\frac{n-1}{n}} + C(n) \Sigma(\R^n).
	\end{multline}

	We then proceed to estimate the integral in \eqref{eq:Af_estimate_step_1}. Let $L > \pseudosup_{2B} u$. By Lemma \ref{lem:pseudosup_superlevel_estimate}, we obtain
	\begin{equation*}
		\int_{\{u \geq  L\}} \eta^n \bigl(\abs{dv}^{n} \circ f\bigr) \abs{Df}^n \lesssim_{n} \Sigma(\R^n).
	\end{equation*}
	In the remaining set $\{u < L\}$ we use Lemma \ref{lem:sublevel_Caccioppoli}, which, recalling that $\norm{d \eta}_{L^n} \leq C(n)$, yields the estimate
	\begin{multline}\label{eq:Af_estimate_step_2}
		\int_{\R^n} \eta^n \bigl(\abs{dv}^{n} \circ f\bigr) \abs{Df}^n\\
		\lesssim_n K^{n} L^n 
		+ K L \int_{\R^n} \eta^n \dd \smallabs{A_f}
		+ C(n) \Sigma(\R^n).
	\end{multline}
	
	Next, chaining together \eqref{eq:Af_estimate_step_1} and \eqref{eq:Af_estimate_step_2} and using the elementary inequality $(a+b)^p \lesssim_p a^p + b^p$ for $a, b, p \geq 0$, we obtain
	\begin{multline}\label{eq:Af_estimate_step_3}
		\int_{\R^n} \eta^n \dd \smallabs{A_f}
		\lesssim_n K^{n-1} L^{n-1} 
		+ (KL)^\frac{n-1}{n} \left( \int_{\R^n} \eta^n \dd \smallabs{A_f} \right)^\frac{n-1}{n}\\ 
		+ C(n) \left(\Sigma(\R^n) + [\Sigma(\R^n)]^\frac{n-1}{n}\right).
	\end{multline}
	We then apply Young's inequality, obtaining
	\[
		C(n) (KL)^\frac{n-1}{n} \left( \int_{\R^n} \eta^n \dd \smallabs{A_f} \right)^\frac{n-1}{n} \leq  \frac{[C(n)]^n K^{n-1} L^{n-1}}{n} + \frac{n-1}{n} \int_{\R^n} \eta^n \dd \smallabs{A_f},
	\]
	where the last integral is finite and can hence be absorbed to the left side of \eqref{eq:Af_estimate_step_3}. In conclusion, we obtain
	\[
		\abs{A_f}(B) \leq \int_{\R^n} \eta^n \dd \smallabs{A_f}
		\lesssim_n K^{n-1} L^{n-1} + C(n) \left(\Sigma(\R^n) + [\Sigma(\R^n)]^\frac{n-1}{n}\right),
	\]
	and as $L > \pseudosup_{2B} u$ was arbitrary, the claim follows.
\end{proof}

\subsection{Existence of unbounded components}

To finish this section, we point out that if $u = v \circ f$ with $f$ as in the previous section, and if we assume that $\smallabs{A_f}(\R^n) = \infty$, then every $u^{-1}(t, \infty)$ has an unbounded component. The result is a relatively immediate consequence of Lemma \ref{lem:estimate_Af_by_L}.

\begin{lemma}\label{lem:unbounded_components_infinite_Af}
	Let $f \in W^{1,n}_\loc(\R^n, \R^n)$ be a non-constant, unbounded, continuous function that satisfies \eqref{eq:QRvalue_at_infty_eucl}, where $K \geq 1$ and $\Sigma \in L^1(\R^n) \cap L^{1+\eps}_\loc(\R^n)$ for some $\eps > 0$. Let $u = v \circ f$, where $v$ is as in \eqref{eq:v_def}. Then for every $t > 0$, there exists $s = s(n, K, \Sigma(\R^n), t) > 0$ such that if $\smallabs{A_f}(B) > s$ for some ball $B \subset \R^n$, then $2B$ meets an unbounded component of $u^{-1}(t, \infty)$. In particular, if $\smallabs{A_f}(\R^n) = \infty$, then for every $t > 0$ the set $u^{-1}(t, \infty)$ has an unbounded component.
\end{lemma}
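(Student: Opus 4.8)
The plan is to deduce this by contraposition from the measure bound already established in Lemma \ref{lem:estimate_Af_by_L}. Fix $t > 0$ and let $s = s(n, K, \Sigma(\R^n), t)$ be a constant $C(n)$ times $K^{n-1} t^{n-1} + \Sigma(\R^n) + [\Sigma(\R^n)]^{\frac{n-1}{n}}$, where $C(n)$ is the implied constant in Lemma \ref{lem:estimate_Af_by_L}. Suppose $B \subset \R^n$ is a ball for which $2B$ does \emph{not} meet an unbounded component of $u^{-1}(t, \infty)$; the goal is to show $\smallabs{A_f}(B) \leq s$, which is precisely the contrapositive of the first claim.

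The key intermediate step is to check that under this assumption $\pseudosup_{2B} u \leq t$. For this I would use that the superlevel sets are nested: if $t' \geq t$ then $u^{-1}(t', \infty) \subset u^{-1}(t, \infty)$, so every unbounded component of $u^{-1}(t', \infty)$ is contained in a single component of $u^{-1}(t, \infty)$, which is then also unbounded. Hence, if $2B$ misses every unbounded component of $u^{-1}(t, \infty)$, it misses every unbounded component of $u^{-1}(t', \infty)$ for all $t' \geq t$, and the definition \eqref{eq:pseudosup_def} of the pseudosupremum gives $\pseudosup_{2B} u \leq t$. Plugging this into Lemma \ref{lem:estimate_Af_by_L} yields $\smallabs{A_f}(B) \leq s$, and the first assertion follows.

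For the ``in particular'' clause I would first observe that $\smallabs{A_f}$ is a locally finite Borel measure: its negative part is globally finite by \eqref{eq:Afminus_estimate}, and its positive part is dominated on bounded sets by $\int \abs{Df}^n$, which is finite since $f \in W^{1,n}_\loc(\R^n, \R^n)$. Therefore, if $\smallabs{A_f}(\R^n) = \infty$, continuity from below along the exhaustion $\R^n = \bigcup_{R > 0} \B^n(0, R)$ shows that $\smallabs{A_f}(\B^n(0, R)) \to \infty$, so for any value of $s$ there is a ball $B$ with $\smallabs{A_f}(B) > s$. Applying the first part with this $B$ forces $2B$ to meet --- hence $u^{-1}(t, \infty)$ to contain --- an unbounded component, for every $t > 0$.

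I do not expect a genuine obstacle: all the analytic content sits in Lemma \ref{lem:estimate_Af_by_L}, and what remains is bookkeeping with nested superlevel sets, the definition of the pseudosupremum, and the local finiteness of the signed measure $A_f$. The one point meriting slight care is that $\pseudosup$ is a supremum which need not be attained, so I would run the monotonicity argument through the statement ``$2B$ meets no unbounded component of $u^{-1}(t',\infty)$ for every $t' \geq t$'' rather than through a putative maximizing level.
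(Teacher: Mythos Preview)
Your proposal is correct and follows essentially the same approach as the paper: both argue by contraposition, observe that if $2B$ meets no unbounded component of $u^{-1}(t,\infty)$ then $\pseudosup_{2B} u \leq t$, and then invoke Lemma~\ref{lem:estimate_Af_by_L}. You supply more detail than the paper on the nesting-of-superlevel-sets justification for the pseudosupremum bound and on the local finiteness needed for the ``in particular'' clause, but the argument is the same.
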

\begin{proof}
	Let $B$ be a ball, and let $s, t > 0$, with the purpose of fixing $s$ later. Suppose that $\smallabs{A_f}(B) > s$, and that $2B$ meets no unbounded component of $u^{-1}(t, \infty)$. Then $\pseudosup_{2B} u \leq t$, and Lemma \ref{lem:estimate_Af_by_L} yields
	\[
		s \leq C(n) K^{n-1} t + C(n)\bigl(\Sigma(\R^n) + [\Sigma(\R^n)]^\frac{n-1}{n}\bigr).
	\]
	Hence, we may set $s(n, K, t)$ to be bigger than the right hand side of the above estimate, and the claim follows.
\end{proof}

\section{The proof of Theorem \ref{thm:Bonk-PoggiCorradini-part}}\label{sect:main_proof}

Following the proofs of the Caccioppoli-type estimates in Section \ref{sect:caccioppoli}, we then proceed to show that the Picard theorem for quasiregular values is true when $\smallabs{A_f}(\R^n) = \infty$, assuming $\Sigma \in L^1(\R^n) \cap L^{1+\eps}_\loc(\R^n)$. For this part of the result, we're able to follow the proof of Bonk and Poggi-Corradini from \cite{Bonk-PoggiCorradini_Rickman-Picard} relatively closely, with the main difference being our use of the pseudosupremum $\pseudosup$ instead of the usual maximum.

We begin by recalling a key tool in the proof that is colloquially referred to as \emph{Rickman's hunting lemma}. For further details including the proof of the lemma, we refer to \cite[Lemma 2.1 and p.\ 627]{Bonk-PoggiCorradini_Rickman-Picard}.

\begin{lemma}[Rickman's Hunting Lemma]\label{lem:Rickman's_hunting_lemma}
	Let $\mu$ be a (non-negative) Borel measure on $\R^n$ such that $\mu(\R^n) = \infty$, $\mu(B) < \infty$ for every ball $B \subset \R^n$, and $\mu$ has no atoms. Then there exists a constant $D = D(n) > 1$ and a sequence of balls $B_j$, $j \in \Z_{>0}$ such that $\mu(8B_j) \leq D \mu(B_j)$ and $\lim_{j \to \infty} \mu(B_j) = \infty$.
\end{lemma}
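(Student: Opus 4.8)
The plan is to reduce the statement to the following claim: there is a constant $D = D(n) > 1$ such that for every $M > 0$ there exists an open ball $B \subset \R^n$ with $\mu(B) \geq M$ and $\mu(8B) \leq D\mu(B)$. Granting this, applying it successively with $M = 1, 2, 3, \dots$ yields balls $B_j$ with $\mu(8B_j) \leq D\mu(B_j)$ and $\mu(B_j) \geq j \to \infty$, which is exactly the conclusion; here the hypotheses $\mu(\R^n) = \infty$ and local finiteness of $\mu$ enter only to guarantee that $\mu(\B^n(0,r)) \uparrow \infty$ as $r \to \infty$, so that balls of arbitrarily large finite mass exist.

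To prove the claim I would fix $M > 0$ and choose $R$ so large that $\mu(\B^n(0, R/10)) \geq M$. The idea is to pick a ball of mass $\geq M$ with the \emph{smallest possible radius} among those sitting well inside $\B^n(0, 2R)$. Precisely, let $\mathcal{F}$ be the family of closed balls $\overline{\B^n}(x, \rho)$ with $\mu(\overline{\B^n}(x,\rho)) \geq M$ and $\overline{\B^n}(x, 20\rho) \subseteq \overline{\B^n}(0, 2R)$, and put $\rho_* = \inf\{\rho : \overline{\B^n}(x,\rho) \in \mathcal{F}\}$. The family is nonempty, as $\overline{\B^n}(0, R/10) \in \mathcal{F}$, and the $20\rho$-clearance condition forces every member of $\mathcal{F}$ to have its center in the compact ball $\overline{\B^n}(0, 2R)$. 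Using this compactness together with continuity from above of the finite measure $\mu$ on shrinking closed balls, one checks that $\rho_*$ is attained by some $\overline{\B^n}(x_*, \rho_*) \in \mathcal{F}$, so $\mu(\overline{\B^n}(x_*, \rho_*)) \geq M$. The crucial point is that $\rho_* > 0$: otherwise there would be members of $\mathcal{F}$ with radii tending to $0$, and along a subsequence of their centers converging to some point $x_*$ one would get $\mu(\{x_*\}) = \lim_{\delta \downarrow 0}\mu(\overline{\B^n}(x_*,\delta)) \geq M > 0$, contradicting that $\mu$ has no atoms.

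With such a minimal ball fixed, set $B = \B^n(x_*, 2\rho_*)$, so that $\mu(B) \geq \mu(\overline{\B^n}(x_*, \rho_*)) \geq M$ and $8B = \B^n(x_*, 16\rho_*)$. Cover $\overline{\B^n}(x_*, 16\rho_*)$ by $N = N(n)$ closed balls $\overline{\B^n}(y_i, \rho_*/5)$ with centers $y_i \in \overline{\B^n}(x_*, 16\rho_*)$, where $N(n)$ is the scale-invariant number of radius-$\tfrac15$ balls needed to cover a radius-$16$ ball. Each cover ball has radius $\rho_*/5 < \rho_*$, and its $20$-fold dilate $\overline{\B^n}(y_i, 4\rho_*)$ is contained in $\overline{\B^n}(x_*, 20\rho_*) \subseteq \overline{\B^n}(0, 2R)$; hence, were $\mu(\overline{\B^n}(y_i, \rho_*/5)) \geq M$, the ball $\overline{\B^n}(y_i, \rho_*/5)$ would belong to $\mathcal{F}$ with radius smaller than $\rho_*$, contradicting minimality. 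Therefore $\mu(\overline{\B^n}(y_i, \rho_*/5)) < M$ for each $i$, and subadditivity gives $\mu(8B) \leq \sum_{i=1}^N \mu(\overline{\B^n}(y_i, \rho_*/5)) < N(n)\,M \leq N(n)\,\mu(B)$. This proves the claim with $D := N(n)$, which is $>1$ since covering a radius-$16$ ball by radius-$\tfrac15$ balls manifestly requires more than one ball.

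The only genuinely delicate point is the bookkeeping built into the definition of $\mathcal{F}$: one needs enough clearance (the factor $20$) that the small balls covering $8B$ — and the dilates of them needed to invoke minimality — still lie inside the fixed ball $\overline{\B^n}(0, 2R)$, while the covering number $N(n)$ stays dependent on $n$ only. I expect this scale/boundary bookkeeping, rather than any analytic input, to be the main obstacle; the compactness argument producing $\rho_* > 0$ (the one spot where the no-atom hypothesis is used, and where it is indispensable) and the final subadditivity estimate are routine. If one prefers to avoid verifying that the infimum $\rho_*$ is attained, it is enough to work instead with a ball in $\mathcal{F}$ of radius at most $(1+\delta)\rho_*$ for small $\delta > 0$, adjusting the constants accordingly.
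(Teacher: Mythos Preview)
Your argument is correct. The minimality-plus-covering strategy is sound: the clearance factor $20$ is chosen precisely so that each of the $N(n)$ covering balls of radius $\rho_*/5$ has its $20$-fold dilate contained in $\overline{\B^n}(x_*,20\rho_*)\subset\overline{\B^n}(0,2R)$, and hence belongs to the competition class $\mathcal{F}$ if it carries mass $\geq M$, which minimality forbids. The no-atom hypothesis is used exactly where it must be, to rule out $\rho_*=0$, and your remark that one can sidestep the attainment issue by working with a $(1+\delta)\rho_*$-ball is well taken.

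The paper itself does not prove this lemma; it simply quotes it and refers to \cite{Bonk-PoggiCorradini_Rickman-Picard} (their Lemma~2.1 and the surrounding discussion) for the details. The argument there is also a covering argument of this flavor, so your approach is in the same spirit as the cited source, and there is nothing to add.
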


We also recall a lemma on conformal capacity that is essentially similar to \cite[Lemma 5.3]{Bonk-PoggiCorradini_Rickman-Picard} but phrased in a more abstract way; this more general formulation will become relevant in the next section. Recall that if $E, F$ are compact disjoint subsets of $\R^n$, then the \emph{(conformal) capacity} of the condenser $(E, F)$ is defined by e.g.\
\begin{equation}\label{eq:capacity_def}
	\capac(E, F) = \inf \left\{ \int_{\R^n} \abs{d \eta}^n : \eta \in C^\infty_0(\R^n), \eta\vert_{E} \geq 1, \eta\vert_{F} \leq 0\right\}.
\end{equation}
By a standard convolution approximation argument, an equivalent definition is obtained if the assumption $\eta \in C^\infty_0(\R^n)$ in \eqref{eq:capacity_def} is replaced by $\eta \in W^{1,n}_0(\R^n) \cap C(\R^n)$. We call a function $\eta \in W^{1,n}_0(\R^n) \cap C(\R^n)$ with $\eta\vert_E \geq 1$ and $\eta\vert_F \leq 0$ \emph{admissible} for the condenser $(E, F)$. 

\begin{lemma}\label{lem:capacity_lower_bound_abstract}
	Let $q \geq 2$. For each $k \in \{1, \dots, q\}$, let $E_k$ and $F_k$ be closed subsets of $\R^n$ such that $E_k \cap F_k = \emptyset$ for every $k$ and $F_l \cup F_k = \R^n$ whenever $l \neq k$. Suppose that $B = \B^n(x_0, r)$ meets an unbounded component of $E_k$ for every $k \in \{1, \dots, q\}$. Let $t > 1$, and define
	\begin{equation}\label{eq:Et_Ft_defs_abstract}
		E_{k, t} = E_k \cap (\overline{tB} \setminus B), \qquad F_{k, t} = F_k \cap (\overline{tB} \setminus B).
	\end{equation}
	Then we have
	\[
		\sum_{k=1}^q \capac(E_{k,t}, F_{k,t}) \gtrsim_n q^\frac{n}{n-1}\log t.
	\]
\end{lemma}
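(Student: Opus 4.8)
The plan is to bound each $\capac(E_{k,t},F_{k,t})$ from below by testing against an arbitrary admissible function, then to slice the annulus $A:=\overline{tB}\setminus B$ into the concentric spheres $S_s:=\{x:\abs{x-x_0}=s\}$ and reduce the whole sum to a packing estimate for $q$ points on a sphere. Write $B=\B^n(x_0,r)$. For each $k$ I would fix an admissible $\eta_k\in W^{1,n}_0(\R^n)\cap C(\R^n)$ for $(E_{k,t},F_{k,t})$ and truncate it to take values in $[0,1]$, which does not increase $\int_{\R^n}\abs{d\eta_k}^n$; it therefore suffices to prove $\sum_{k=1}^q\int_{\R^n}\abs{d\eta_k}^n\gtrsim_n q^{n/(n-1)}\log t$ for every such choice and pass to the infimum. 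First I would unpack the hypothesis $F_l\cup F_k=\R^n$: it says exactly that the sets $G_k:=\R^n\setminus F_k$ are pairwise disjoint, and since $E_k\cap F_k=\emptyset$ forces $E_k\subset G_k$, we get $E_l\subset G_l\subset F_k$ whenever $l\neq k$. Hence $E_{l,t}\subset F_{k,t}$ for $l\neq k$, so $\eta_k\equiv 1$ on $E_{k,t}$ and $\eta_k\equiv 0$ on $\bigcup_{l\neq k}E_{l,t}$.

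Next I would produce crossing continua. For each $k$ let $C_k$ be the unbounded component of $E_k$ meeting $B$ from the hypothesis. Then $\overline{C_k}\subset E_k$ is connected, meets the open ball $\{\abs{x-x_0}<r\}$, and — being unbounded — meets $\{\abs{x-x_0}>tr\}$ as well, so by the standard topological fact that such a connected set contains a continuum inside the closed annulus $A$ joining its two boundary spheres, there is a continuum $\beta_k\subset\overline{C_k}\cap A\subset E_k\cap A=E_{k,t}$ meeting both $S_r$ and $S_{tr}$. By connectedness $\beta_k$ then meets $S_s$ for every $s\in(r,tr)$. The $\beta_1,\dots,\beta_q$ are pairwise disjoint since they lie in the pairwise disjoint $G_k$, and $\eta_k\equiv 1$ on $\beta_k$, $\eta_k\equiv 0$ on $\bigcup_{l\neq k}\beta_l$ by the previous paragraph.

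The core of the argument is a spherical slicing. By the coarea formula applied to $x\mapsto\abs{x-x_0}$, and since the tangential component of $d\eta_k$ along $S_s$ has pointwise norm at most $\abs{d\eta_k}$,
\[
	\int_{\R^n}\abs{d\eta_k}^n\;\geq\;\int_A\abs{d\eta_k}^n\;\geq\;\int_r^{tr}\Bigl(\int_{S_s}\abs{\nabla_{S_s}\eta_k}^n\,\dd\mathcal{H}^{n-1}\Bigr)\,\dd s.
\]
For a.e.\ $s\in(r,tr)$ the restriction $\eta_k\vert_{S_s}$ is a continuous $W^{1,n}(S_s)$ function equal to $1$ on the nonempty compact set $\beta_k\cap S_s$ and to $0$ on the nonempty compact set $\bigcup_{l\neq k}\beta_l\cap S_s$, so the inner integral is at least the $n$-capacity (computed inside the $(n-1)$-dimensional manifold $S_s$) of the condenser formed by these two sets. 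Choosing $p_k(s)\in\beta_k\cap S_s$, using monotonicity of capacity, and invoking the standard computation that a two-point condenser in a round $(n-1)$-sphere has $n$-capacity comparable to $(\text{distance})^{-1}$ uniformly up to the diameter, the inner integral is $\gtrsim_n\delta_k(s)^{-1}$, where $\delta_k(s):=\dist\bigl(p_k(s),\{p_l(s):l\neq k\}\bigr)>0$.

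To finish I would run a packing argument on each sphere. For fixed $s$ the balls $\B_{S_s}\bigl(p_k(s),\delta_k(s)/2\bigr)$, $k=1,\dots,q$, are pairwise disjoint — because $\dist(p_k(s),p_l(s))\geq\max(\delta_k(s),\delta_l(s))\geq\tfrac12(\delta_k(s)+\delta_l(s))$ — and each has $\mathcal{H}^{n-1}$-measure $\gtrsim_n\delta_k(s)^{n-1}$, so $\sum_k\delta_k(s)^{n-1}\lesssim_n\mathcal{H}^{n-1}(S_s)\asymp_n s^{n-1}$. Hölder's inequality with exponents $\tfrac{n}{n-1}$ and $n$ then gives $q\leq\bigl(\sum_k\delta_k(s)^{-1}\bigr)^{(n-1)/n}\bigl(\sum_k\delta_k(s)^{n-1}\bigr)^{1/n}$, i.e.\ $\sum_k\delta_k(s)^{-1}\gtrsim_n q^{n/(n-1)}s^{-1}$. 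Summing the previous display over $k$ and integrating,
\[
	\sum_{k=1}^q\int_{\R^n}\abs{d\eta_k}^n\;\gtrsim_n\;\int_r^{tr}\sum_{k=1}^q\delta_k(s)^{-1}\,\dd s\;\gtrsim_n\;q^{n/(n-1)}\int_r^{tr}\frac{\dd s}{s}\;=\;q^{n/(n-1)}\log t,
\]
and taking the infimum over admissible $\eta_1,\dots,\eta_q$ gives the claim. The step I expect to be the main obstacle is making the slicing genuinely lossless: the exponent $q^{n/(n-1)}$, rather than the mere $q$ produced by a single "two crossing continua" Loewner estimate per sphere, is forced by retaining the full $n$-th power of the gradient, which lies above the conformal exponent $n-1$ of $S_s$ and hence detects the point masses $p_k(s)$, combined with the packing inequality above. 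The accompanying technical care — a two-point spherical capacity bound with constants uniform as the distance ranges over the whole sphere, and the topological extraction of the continua $\beta_k$ — is routine but must be carried out.
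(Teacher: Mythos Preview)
Your argument is correct. The overall architecture matches the paper's proof: extract the inclusion $E_l\subset F_k$ for $l\neq k$ from $F_l\cup F_k=\R^n$, slice the annulus by the spheres $S_s$, and finish with a H\"older argument that produces the exponent $q^{n/(n-1)}$. Where you genuinely diverge is in the per-sphere estimate. The paper does not reduce to a two-point condenser; instead it invokes \cite[Lemma~3.3]{Bonk-PoggiCorradini_Rickman-Picard}, which bounds $\capac(E_{k,t},F_{k,t})$ below by $\int_1^t r\,[\cH^{n-1}(S_{sr}\setminus(E_k\cup F_k))]^{-1/(n-1)}\,ds$, and then uses that the \emph{gap sets} $S_s\setminus(E_k\cup F_k)$ are pairwise disjoint (since $\R^n\setminus(E_k\cup F_k)\subset\R^n\setminus F_k$) to feed H\"older. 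Your route replaces this cited lemma by a direct coarea slice plus the supercritical Sobolev--Morrey inequality $\abs{\eta(a)-\eta(b)}\lesssim_n d(a,b)^{1/n}\norm{\nabla_{S_s}\eta}_{L^n(S_s)}$, which gives the two-point $n$-capacity bound $\gtrsim_n\delta^{-1}$ with a constant independent of $s$; the packing of the continua then plays the role of the gap-set disjointness. Your approach is self-contained (no external capacity lemma) and arguably more elementary; the paper's is shorter because it outsources the slicing, and it never needs to extract continua or invoke Morrey. One small comment: you do not actually need the continuum $\beta_k$ --- connectedness of the unbounded component $C_k$ already forces $C_k\cap S_s\neq\emptyset$ for every $s\in[r,tr]$, which is all you use.
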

\begin{proof}
	If $l \neq k$, we observe that since $F_l \cup F_k = \R^n$ and $E_l \cap F_l = \emptyset$, we have $E_l \subset \R^n \setminus F_l \subset F_k$. Consequently, $B$ also meets an unbounded component of $F_k$ for every $k \in \{1, \dots, q\}$, as due to our assumption that $q \geq 2$, we may select $l \in \{1, \dots, q\} \setminus \{k\}$ and note that $B$ meets an unbounded component of $E_l \subset F_k$. It follows that $(\partial sB) \cap E_k^1 \neq \emptyset \neq (\partial sB) \cap F_k$ for every $s \geq 1$, and we may thus use a capacity estimate given e.g.\ in \cite[Lemma 3.3]{Bonk-PoggiCorradini_Rickman-Picard} to conclude that
	\[
		\capac(E_{k,t}, F_{k,t}) \gtrsim_n \int_{1}^{t} \frac{r \dd s}{\left[\cH^{n-1}((\partial sB) \setminus (E_k \cup F_k))\right]^\frac{1}{n-1}}.
	\]
	We note that the denominator $\cH^{n-1}((\partial sB) \setminus (E_k \cup F_k))$ in the above integral is non-zero for every $s\geq1$; indeed, $(\partial sB) \setminus (E_k \cup F_k))$ is an open subset of $\partial sB$, and $(\partial sB) \setminus (E_k \cup F_k))$ is non-empty since $\partial sB$ is connected and $E_k$ and $F_k$ are disjoint closed sets.
	
	We then observe that the sets $\R^n \setminus (E_k \cup F_k)$ are pairwise disjoint, since $(\R^n \setminus (E_k \cup F_k)) \cap (\R^n \setminus (E_l \cup F_l)) \subset \R^n \setminus (F_k \cup F_l) = \emptyset$ whenever $k \neq l$. Thus, the sets $(\partial sB) \setminus (E_k \cup F_k)$ are disjoint for every $s \geq 1$, and Hölder's inequality for sums yields that
	\begin{multline*}
		q = \sum_{k=1}^q \left[\cH^{n-1}((\partial sB) \setminus (E_k \cup F_k))\right]^\frac{1}{n}\frac{1}{\left[\cH^{n-1}((\partial sB) \setminus (E_k \cup F_k))\right]^{\frac{1}{n}}}\\
		\leq \left[\cH^{n-1}(\partial sB)\right]^\frac{1}{n}\left( \sum_{k=1}^q \frac{1}{\left[\cH^{n-1}((\partial sB) \setminus (E_k \cup F_k))\right]^\frac{1}{n-1}} \right)^\frac{n-1}{n}.
	\end{multline*}
	Since $[\cH^{n-1}(\partial sB)]^{1/n} \lesssim_n (rs)^{(n-1)/n}$, we hence obtain the desired
	\begin{multline*}
		\sum_{k=1}^q \capac(E_{k,t}, F_{k,t}) \gtrsim_n \int_{1}^{t} \sum_{k=1}^q \frac{r ds}{\left[\cH^{n-1}((\partial sB) \setminus (E_k \cup F_k))\right]^\frac{1}{n-1}}\\
		\gtrsim_n \int_{1}^{t} \frac{q^\frac{n}{n-1} r\dd s}{rs} = q^\frac{n}{n-1} \log t.
	\end{multline*}	
\end{proof}

Now, we begin the proof of Theorem \ref{thm:Bonk-PoggiCorradini-part}. We recall the statement for the convenience of the reader.

\mainthmBCpart*

\begin{proof}
	Suppose that $h \in W^{1,n}_\loc(\R^n, \S^n)$ is continuous and has a $(K, \Sigma)$-quasiregular value with respect to the spherical metric at $q$ distinct points $w_1, \dots, w_{q} \in \partial h(\R^n)$, yet $h \notin W^{1,n}(\R^n, \S^n)$. Our objective is hence to find an upper bound on $q$ that only depends on $n$ and $K$. We may assume $q \geq 2$. Since $w_k \in \partial h(\R^n)$, by the single-value Reshetnyak's theorem for spherical quasiregular values given in Theorem \ref{thm:Picard_for_QR_values_spherical} \ref{item:Reshetnyak_sphere}, we conclude that $w_k \notin h(\R^n)$.
	
	For every point $w_k$, we select an isometric rotation $R_k \colon \S^n \to \S^n$ that takes $w_k$ to $s_n(\infty)$, and denote $h_k = R_k \circ h$. Since $R_k$ is an orientation-preserving isometry of $\S^n$, it follows that $h_k$ has a $(K, \Sigma)$-quasiregular value with respect to the spherical metric at $s_n(\infty)$. 
	
	Consequently, we obtain maps $f_k \in W^{1,n}_\loc(\R^n, \R^n)$ such that $h_k = s_n \circ f_k$. Notably, for every $k \in \{1, \dots, q\}$ and every measurable $E \subset \R^n$, we have
	\[
		A_{f_k}(E) = \int_E f_k^* s_n^* \vol_{\S^n} = \int_E h_k^* \vol_{\S^n} = \int_E h^* R_k^* \vol_{\S^n} = \int_E h^* \vol_{\S^n}.
	\]
	That is, every $A_{f_k}$ is the same measure; for convenience, we denote this measure by $A_f$. Moreover, since we assumed that $h \notin W^{1,n}(\R^n, \S^n)$, we have $\norm{Dh}_{L^n} = \infty$, and since also $\Sigma(\R^n) < \infty$, \eqref{eq:spherical_QR_value} yields that
	\[
		\abs{A_f}(\R^n) \geq A_f(\R^n) = \int_{\R^n} J_h \geq \frac{1}{K} \left( \int_{\R^n} \abs{Dh}^n - \pi^n \int_{\R^n} \Sigma \right) = \infty.
	\]
	We also note that since every $h_k$ has a $(K, \Sigma)$-quasiregular value with respect to the spherical metric at $s_n(\infty)$, we obtain that every $f_k$ satisfies \eqref{eq:QRvalue_at_infty_eucl} by Lemma \ref{lem:QRvalue_sphere_single}, allowing us to use the results of Section \ref{sect:caccioppoli} on $f_k$.
	
	We then let $u_k = v \circ f_k$ for every $k \in \{1, \dots, q\}$, where $v$ is as in \eqref{eq:v_def}. We note that the sets $s_n(\{\infty\} \cup v^{-1}(t, \infty))$ form a neighborhood basis of $s_n(\infty)$, where the neighborhoods become smaller as $t > 0$ increases. Hence, there exists $C_0 = C_0(n, w_1, w_2, \dots, w_q) > 0$ such that for every $t \geq C_0$, the sets $u_k^{-1}(t, \infty), k \in \{1, \dots, q\}$ are pairwise disjoint. Moreover, by Lemma \ref{lem:unbounded_components_infinite_Af}, there exists $A_0 = A_0(n, K, \Sigma(\R^n), w_1, w_2, \dots, w_q)$ such that if $B \subset \R^n$ is a ball with $A_f(B) > A_0$, then $2B$ meets an unbounded component of each of the sets $u_k^{-1}(3C_0, \infty)$.
	
	Since $\smallabs{A_f}(\R^n) = \infty$, we may also use Rickman's Hunting Lemma \ref{lem:Rickman's_hunting_lemma} to obtain $B_j \subset \R^n$ such that $\lim_{j \to \infty} \smallabs{A_f}(B_j) = \infty$ and $\smallabs{A_f}(8B_j) \lesssim_n \smallabs{A_f}(B_j)$. Then there exists $j_0 > 0$ such that $\smallabs{A_f}(B_j) > A_0$ whenever $j \geq j_0$. For all such $j$ and for every $k \in \{1, \dots, q\}$, we define
	\begin{equation*}
		L_{j,k} = \pseudosup_{2B_j} u_k
	\end{equation*}
	We also define
	\begin{align*}
		E_{k}^j &= u_k^{-1} [2L_{j,k}/3, \infty),& F_{k}^j &= u_k^{-1} [0, L_{j,k}/3],
	\end{align*}
	and 
	\begin{align*}
		E_{k,2}^j &= E_{k}^j \cap (\overline{4B_j} \setminus 2B_j), & F_{k,2}^j &= E_{k}^j \cap (\overline{4B_j} \setminus 2B_j).
	\end{align*}

	We claim that for every $j \geq j_0$, the sets $E_{k}^j$ and $F_{k}^j$ with $k \in \{1, \dots q\}$ satisfy the assumptions of Lemma \ref{lem:capacity_lower_bound_abstract}. Indeed, it is clear from the definition that $E_k^j \cap F_k^j = \emptyset$ for every $k$. Since $A_f(B_j) > A_0$, $2B_j$ meets an unbounded component of $u_k^{-1}(3C_0, \infty)$, and hence $L_{j,k} \geq 3C_0 > 0$ for every $k$. Thus, the sets $\R^n \setminus F_k^j = u_k^{-1}(L_{j,k}/3, \infty)$ are pairwise disjoint, and consequently $F_k^j \cup F_l^j = \R^n$ whenever $k \neq l$. Since $0 < L_{j,k} = \pseudosup_{2B_j} u_k$, we also have that $2B_j$ meets an unbounded component of every $u_k^{-1}(2L_{j,k}/3, \infty)$, and consequently $2B_j$ also meets an unbounded component of every $E_k^j$. Thus, the assumptions of Lemma \ref{lem:capacity_lower_bound_abstract} are satisfied, and it follows that for every $j \geq j_0$, we have
	\begin{equation}\label{eq:capacity_ineq_in_practice}
		\sum_{k=1}^q \capac(E_{k,2}^j, F_{k,2}^j) \gtrsim_n q^\frac{n}{n-1}.
	\end{equation}

	With \eqref{eq:capacity_ineq_in_practice} shown, we begin estimating. Let $j \geq j_0$.
	By using Lemma \ref{lem:estimate_Af_by_L} on $f_k$, we obtain
	\begin{equation}\label{eq:Af_from_above}
		\smallabs{A_f}(B_j) \lesssim_n K^{n-1} L_{j,k}^{n-1} + C(n, \Sigma(\R^n)).
	\end{equation}
	for every $k \in \{1, \dots, q\}$. Notably, since $\lim_{j \to \infty} \smallabs{A_f}(B_j) = \infty$ by our use of Rickman's Hunting Lemma, we conclude from \eqref{eq:Af_from_above} that
	\begin{equation}\label{eq:L_minimum_limit}
		\lim_{j \to \infty} \min_{k} L_{j,k} = \infty.
	\end{equation}

	We may then select a function $\psi_j \in C^\infty_0(8B_j)$ such that $\norm{\nabla \psi_j}_{L^n} \leq C(n)$ and $\psi_j \equiv 1$ on a neighborhood of $\overline{4B_j}$. Now, the function
	\[
	\eta_j = \left(\frac{3 \min(u_k, L_{j,k})}{L_{j,k}}  - 1\right)\psi
	\]
	is admissible for the condenser $(E_{k,2}^j, F_{k,2}^j)$. It follows that
	\begin{equation*}
	\capac(E_{k,2}^j, F_{k,2}^j) \leq \int_{\R^n} \abs{\nabla \eta_j}^n
	\lesssim_n \norm{\nabla \psi_j}_{L^n}^n + \int_{4B_j \cap \{u_k < L_{j,k}\}} \dfrac{\abs{\nabla u_k}^n}{\displaystyle L_{j,k}^{n}}
	\end{equation*}
	We apply Corollary \ref{cor:estimate_L_by_Af} to the last integral and use $\norm{\nabla \psi_j}_{L^n} \lesssim_n 1 \leq K^n$, obtaining
	\begin{equation*}
		\capac(E_{k,2}^1, E_{k,2}^2)
		\lesssim_n
		2K^n + \frac{K \smallabs{A_f}(8B_j)}{L_{j,k}^{n-1}} + \frac{C(n, \Sigma(\R^n))}{L_{j,k}^n}.
	\end{equation*}
	
	Due to \eqref{eq:capacity_ineq_in_practice}, there always exists a $k = k(j, h) \in \{1, \dots, q\}$ such that $\capac(E_{k,2}^1, E_{k,2}^2) \geq C(n) q^{1/(n-1)}$. Hence, for this specific choice of $k$, we have
	\begin{equation*}
		q^{\frac{1}{n-1}} \lesssim_n
		2K^n + \frac{K \smallabs{A_f}(8B_j)}{L_{j,k}^{n-1}} + \frac{C(n, \Sigma(\R^n))}{L_{j,k}^n}.
	\end{equation*}
	We then apply \eqref{eq:Af_from_above} and the estimate $\smallabs{A_f}(8B_k) \lesssim_n \smallabs{A_f}(B_k)$ we have from our use of Rickman's Hunting lemma, obtaining
	\begin{equation}\label{eq:q_from_above}
		q^{\frac{1}{n-1}} \lesssim_n 
		3K^n + \frac{K C(n, \Sigma(\R^n))}{L_{j,k}^{n-1}} + \frac{C(n, \Sigma(\R^n))}{L_{j,k}^n}
	\end{equation}
	for our specific choice of $k = k(j, h)$. But if we now let $j \to \infty$ in \eqref{eq:q_from_above}, it follows from \eqref{eq:L_minimum_limit} that the terms involving $L_{j,k}$ vanish at the limit, and we obtain the desired
	\[
		q \leq C(n) K^{n(n-1)},
	\]
	concluding the proof.
\end{proof}

\section{The proof of Theorems \ref{thm:Picard_for_QR_values} and \ref{thm:Picard_for_QR_values_spherical}}\label{sect:infinite_measure}

In order to prove Theorems \ref{thm:Picard_for_QR_values} and \ref{thm:Picard_for_QR_values_spherical}, what remains is essentially to show that under the assumptions of Theorem \ref{thm:Picard_for_QR_values}, we have $\smallabs{A_f}(\R^n) = \infty$. As stated in the introduction, this is a short step in the quasiregular version of the proof \cite[p.631]{Bonk-PoggiCorradini_Rickman-Picard}, but grows into a significantly more complex undertaking in our setting. 

\subsection{The two cases} The starting point of our argument is that if one does not have $A_f(\R^n) = \infty$, then one essentially obtains an $L^n$-integrability condition for $\nabla \log \abs{f}$. This general idea of obtaining $L^n$-regularity for $\nabla \log \abs{f}$ when the behavior of $f$ differs from that of a quasiregular map is frequent in the proofs of other results on quasiregular values \cite{Kangasniemi-Onninen_Heterogeneous, Kangasniemi-Onninen_1ptReshetnyak}. The following lemma covers the standard case that we can reduce all other cases to.

\begin{lemma}\label{lem:two_cases_lemma}
	Let $K \geq 1$ and $\Sigma \in L^{1}(\R^n) \cap L^{1+\eps}_\loc(\R^n)$ for some $\eps > 0$. Suppose that $f \in W^{1,n}_\loc(\R^n, \R^n)$ is an unbounded, continuous function such that $f$ has a $(K, \Sigma)$-quasiregular value at $0$ and $0 \notin f(\R^n)$. Then
	\[
		\abs{A_f}(\R^n) = \infty
		\qquad \text{or} \qquad
		\int_{\R^n} \frac{\abs{Df}^n}{\abs{f}^n} < \infty.
	\]
\end{lemma}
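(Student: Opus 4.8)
The first alternative is vacuously available when $\abs{A_f}(\R^n) = \infty$, so I would assume $\abs{A_f}(\R^n) < \infty$ and prove $\int_{\R^n}\abs{Df}^n\abs f^{-n} < \infty$. Three preliminary observations set the stage. First, since $f$ has a $(K,\Sigma)$-quasiregular value at $0$, it satisfies \eqref{eq:QRvalue_at_infty_eucl}, so $\abs{Dh}^n\le KJ_h + C(n)\Sigma$ for $h = s_n\circ f$, whence $\int_{\R^n}\abs{Dh}^n\le K A_f(\R^n) + C(n)\Sigma(\R^n)\le K\abs{A_f}(\R^n) + C(n)\Sigma(\R^n) < \infty$; equivalently $\int_{\R^n}\abs{Df}^n(1+\abs f^2)^{-n} < \infty$, which already controls $\abs{Df}^n\abs f^{-n}$ on any region where $\abs f$ is bounded away from $0$ and $\infty$. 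Second, \eqref{eq:QRvalue_def} with $y_0 = 0$ gives the pointwise bounds $\abs{Df}^n\abs f^{-n}\le KJ_f^+\abs f^{-n} + \Sigma$ and $J_f^-\abs f^{-n}\le K^{-1}\Sigma$, so by $\Sigma\in L^1(\R^n)$ it suffices to bound $\int_{\R^n}J_f^+\abs f^{-n}$. Third, writing $\iota(y) = y\abs y^{-2}$ and fixing a reflection $\rho$, the map $\tilde g = \rho\circ\iota\circ f$ again has a $(K,\Sigma)$-quasiregular value at $0$, satisfies $0\notin\tilde g(\R^n)$ and $A_{\tilde g} = A_f$, and obeys $\abs{D\tilde g}^n\abs{\tilde g}^{-n} = \abs{Df}^n\abs f^{-n}$ while $\{\abs{\tilde g} < \delta\} = \{\abs f > \delta^{-1}\}$. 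Combining these, for a small fixed $\delta$ the middle region $\{\delta\le\abs f\le\delta^{-1}\}$ is harmless and it remains to estimate $\int\abs{Df}^n\abs f^{-n}$ over the small-value set $\{\abs f < \delta\}$ of $f$ and of $\tilde g$.

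On a \emph{bounded} component $U$ of $\{\abs f < \delta\} = f^{-1}(\B^n(0,\delta))$, the hypothesis $0\notin f(\R^n)$ forces $\deg(f,\cdot,U)\equiv 0$ on $\B^n(0,\delta)$: indeed $\deg(f,0,U) = 0$ because $f$ has no zeros and $0\notin f(\partial U)$, and $\deg(f,\cdot,U)$ is constant on the component of $\R^n\setminus f(\partial U)$ through $0$, which contains all of $\B^n(0,\delta)$ since $f(\partial U)\subset\partial\B^n(0,\delta)$. The Sobolev change of variables then gives $\int_U\Phi(\abs f)J_f = 0$ for every bounded continuous $\Phi$; taking $\Phi_M(t) = \min(t^{-n},M)$, multiplying \eqref{eq:QRvalue_def} by $\Phi_M(\abs f)$ and integrating over $U$, the Jacobian term drops and, letting $M\to\infty$, we obtain $\int_U\abs{Df}^n\abs f^{-n}\le\Sigma(U)$. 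Summing over the countably many disjoint bounded components (of $\{\abs f < \delta\}$ for $f$ and for $\tilde g$) bounds their total contribution by $\Sigma(\R^n) < \infty$. This is the analogue, for the value $0$, of the role played by \cite[Lemma 4.4]{Bonk-PoggiCorradini_Rickman-Picard} in the quasiregular case, now accessed through Lemma \ref{lem:superlevel_bdd_component} rather than through unboundedness of the level sets.

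For the \emph{unbounded} components I would invoke the pseudosupremum estimates of Section \ref{sect:caccioppoli} applied to $\tilde g$. Let $u = v\circ\tilde g$ with $v$ as in \eqref{eq:v_def}; on $\{\abs{\tilde g}\text{ large}\} = \{\abs f\text{ small}\}$ one then has $u$ large, and by \eqref{eq:v_abs_value}, \eqref{eq:H_estimates} and $S(r)\to 1$ one has $(\abs{dv}^n\circ\tilde g)\abs{D\tilde g}^n\gtrsim_n\abs{Df}^n\abs f^{-n}$ there. Set $T_\infty = \sup\{\pseudosup_B u : B\subset\R^n\text{ a ball}\}$. If $T_\infty < \infty$, then for any ball $B$, any $\eta\in C^\infty_0(B,[0,1])$, and any $L > T_\infty$, Lemma \ref{lem:pseudosup_superlevel_estimate} gives $\int_{\{u\ge L\}}\eta^n(\abs{dv}^n\circ\tilde g)\abs{D\tilde g}^n\lesssim_n\Sigma(\R^n)$; letting $B\uparrow\R^n$ and $\eta\uparrow 1$, and choosing $L$ so large that $\{u\ge L\}\subset\{\abs f < \delta\}$, we conclude $\int_{\{\abs f < \delta\}}\abs{Df}^n\abs f^{-n}\lesssim_n\Sigma(\R^n) < \infty$. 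The symmetric statement with $f$ and $\tilde g$ interchanged disposes of the remaining piece, and the lemma follows whenever $T_\infty < \infty$ (and likewise with the roles of $f$ and $\tilde g$ swapped).

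The crux is therefore the case $T_\infty = \infty$: sublevel sets $\{\abs f < \delta\}$ (equivalently, by the inversion symmetry, superlevel sets $\{\abs f > L\}$) having unbounded components along which $\abs f$ escapes to $0$ (resp.\ $\infty$) — precisely the phenomenon flagged in the introduction as the second obstacle around \cite[Lemma 4.4]{Bonk-PoggiCorradini_Rickman-Picard}. I would show this case is incompatible with $\abs{A_f}(\R^n) < \infty$. On such an unbounded component $V$ the finite energy $\int_{\R^n}\abs{Df}^n(1+\abs f^2)^{-n} < \infty$ already yields $\int_V\abs{Df}^n < \infty$, so $f|_V$ has vanishing oscillation on spheres $\partial\B^n(0,\rho_j)$ along a suitable sequence $\rho_j\to\infty$ and hence a limit value $y_\infty\in\overline{\B^n}(0,\delta)$ at the end of $V$; when $y_\infty\ne 0$ the degree argument of the second step extends to $V$ and there is nothing further to prove, and when $y_\infty = 0$ — equivalently, when the nested thread of unbounded components persists to arbitrarily high levels of $u$ — I would run a single-plate version of the capacity and area-growth scheme of Section \ref{sect:main_proof} (Corollary \ref{cor:estimate_L_by_Af}, the spherical capacity estimate, and Rickman's Hunting Lemma \ref{lem:Rickman's_hunting_lemma}) along these components to force an unbounded lower bound for $\abs{A_f}$ on an exhausting family of annuli, the desired contradiction. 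I expect this last step to be the main obstacle, and it is exactly here that the global $L^1$-bound on $\Sigma$ is indispensable: it excludes the degenerate, essentially lower-dimensional "tubes to $0$ or $\infty$'' that would otherwise be unbounded components carrying infinite $\int\abs{Df}^n\abs f^{-n}$ while keeping $\abs{A_f}(\R^n)$ finite.
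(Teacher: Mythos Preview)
Your reduction to bounding $\int_{\R^n}|Df|^n|f|^{-n}$ under the hypothesis $|A_f|(\R^n)<\infty$, the handling of the middle region via $|Dh|\in L^n$, and the degree argument for bounded components of sub- and super-level sets are all correct and match the paper's tools. The gap is in your case analysis for unbounded components, and specifically in the last paragraph.

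The paper's dichotomy is not $T_\infty<\infty$ versus $T_\infty=\infty$, but rather: either there exist $0<s_1<s_2$ such that \emph{both} $\{|f|>s_2\}$ and $\{|f|<s_1\}$ have unbounded components (Case 1), or there is a threshold $s_0\in[0,\infty]$ with $\{|f|>s\}$ having only bounded components for $s>s_0$ and $\{|f|<s\}$ having only bounded components for $s<s_0$ (Case 2). In Case 1 one runs the capacity argument of Lemma \ref{lem:capacity_lower_bound_abstract} with $q=2$, using $E_1=\{|f|\ge c_4\}$, $E_2=\{|f|\le c_1\}$ and matching $F_k$, to get $\log t$ growth; Corollary \ref{cor:estimate_L_by_Af} then forces $|A_f|(\R^n)=\infty$. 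In Case 2, Lemma \ref{lem:superlevel_bdd_component} applied to $f$ with $\Phi(t)=t^{-n}$ for $s>s_0$, and to the inversion $\iota\circ f$ for $s<s_0$, gives $\int_{\{|f|\neq s_0\}}|Df|^n|f|^{-n}\lesssim_n\Sigma(\R^n)$ directly; on $\{|f|=s_0\}$ one has $J_f=0$ a.e.

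Your ``$T_\infty=\infty$'' case conflates two genuinely different situations. Suppose $\{|f|<\delta\}$ has an unbounded component for every $\delta>0$ (your $T_\infty=\infty$ for $\tilde g$). If also $\{|f|>s\}$ has an unbounded component for some $s>0$, we are in the paper's Case 1 and the two-plate capacity argument works. But if $\{|f|>s\}$ has only bounded components for every $s>0$ (the paper's $s_0=0$), there is \emph{no} contradiction with $|A_f|<\infty$ to be found: instead, Lemma \ref{lem:superlevel_bdd_component} applied to $f$ with $\Phi(t)=t^{-n}$ and $s\downarrow 0$ already covers all of $\{|f|>0\}=\R^n$ and gives $\int_{\R^n}|Df|^n|f|^{-n}\le\Sigma(\R^n)$ outright. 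Your ``single-plate'' capacity scheme cannot yield the $\log t$ lower bound, since Lemma \ref{lem:capacity_lower_bound_abstract} genuinely needs $q\ge 2$ for the disjointness in the H\"older-for-sums step; and your $T_\infty<\infty$-for-$f$ branch via Lemma \ref{lem:pseudosup_superlevel_estimate} carries the weight $(|dv|^n\circ f)\sim|f|^{n/(n-1)}$ near $f=0$ rather than the required $|f|^{-n}$, so it does not control the small-$|f|$ region. The fix is to change the dichotomy, not to deepen the $T_\infty=\infty$ analysis.
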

\begin{proof}
	We observe that since $f$ has a $(K, \Sigma)$-quasiregular value at $0$, we can use Lemmas \ref{lem:QRvalue_sphere} and \ref{lem:QRvalue_sphere_single} to conclude that $f$ satisfies \eqref{eq:QRvalue_at_infty_eucl}. Since $f$ is also unbounded, we may hence use the results of Section \ref{sect:caccioppoli} on $f$. We divide the proof into two main cases.
	
	\emph{Case 1:} We consider first the case where there exist $0 < s_1 < s_2 < \infty$ such that $\{\abs{f} > s_2\}$ and $\{\abs{f} < s_1\}$ both have an unbounded component. In this case, we show that $\abs{A_f}(\R^n) = \infty$. The argument is similar to the proof that $\abs{A_f}(\R^n) = \infty$ in the quasiregular case.
	
	Indeed, in this case, let $B$ be a ball that meets the unbounded components of both $\{\abs{f} > s_2\}$ and $\{\abs{f} < s_1\}$. We pick values $c_1, c_2, c_3, c_4$ such that $s_1 < c_1 < c_2 < c_3 < c_4 < s_2$. We let $E_1 = \{\abs{f} \geq c_4\}$, $F_1 = \{\abs{f} \leq c_3\}$, $E_2 = \{\abs{f} \leq c_1\}$, and $F_2 = \{\abs{f} \geq c_2\}$. Since we have $\{\abs{f} > s_2\} \subset E_1$ and $\{\abs{f} < s_1\} \subset E_2$, $B$ meets an unbounded component of $E_1$ and $E_2$. Moreover, $E_1 \cap F_1 = \emptyset = E_2 \cap F_2$ and $F_1 \cup F_2 = \R^n$. Consequently the sets $E_i$ and $F_i$ satisfy the conditions of Lemma \ref{lem:capacity_lower_bound_abstract} with $q = 2$. Hence, if $t > 1$, and $E_{i,t}$, $F_{i,t}$ are as in \eqref{eq:Et_Ft_defs_abstract}, we get
	\[
		\capac(E_{1,t}, F_{1,t}) + \capac(E_{2,t}, F_{2,t}) \gtrsim_n \log t.
	\]
	
	Consider first the case where one can find arbitrarily large values of $t$ such that $\capac(E_{1,t}, F_{1,t}) \gtrsim_n \log t$. We let $u = v \circ f$ where $v$ is as in \eqref{eq:v_def}, and select a $\psi \in C^\infty_0(2tB, [0, 1])$ with $\norm{\nabla \psi}_{L^n} \leq C(n)$ and $\psi \equiv 1$ in a neighborhood of $tB$. Similarly to the beginning of the proof of Theorem \ref{thm:Bonk-PoggiCorradini-part}, we obtain that
	\[
		\eta = \left( \frac{\min(u, H(c_4)) - H(c_3)}{H(c_4) - H(c_3)} \right) \psi
	\]
	is admissible for the condenser $(E_{1,t}, F_{1,t})$, where $H$ is as in \eqref{eq:H_def}.  We then use Corollary \ref{cor:estimate_L_by_Af} to obtain that
	\begin{multline*}
		\log t \lesssim_n \capac(E_{1,t}, F_{1,t}) \leq \int_{\R^n} \abs{\nabla \eta}^n\\ \lesssim_n \frac{1}{(H(c_4) - H(c_3))^n}  \left(H^n(c_3) \norm{\nabla \psi}_{L^n}^n + \int_{2tB \cap \{u < H(c_4)\}} \abs{\nabla u}^n \right)\\
		\leq C(n, K, c_3, c_4, \Sigma(\R^n)) + C(n, K, c_3, c_4) \smallabs{A_{f}}(4tB).
	\end{multline*}
	Letting $t \to \infty$, we conclude that $\smallabs{A_{f}}(\R^n) = \infty$. 
	
	In the other case where $\capac(E_{2,t}, F_{2,t}) \gtrsim_n \log t$ for arbitrarily large $t$, we repeat the above proof with the function
	\[
		\eta = \left( \frac{H(c_2) - \min(u, H(c_2))}{H(c_2) - H(c_1)} \right) \psi.
	\]
	Indeed, this $\eta$ is admissible for the condenser $(E_{2,t}, F_{2,t})$, and provides an analogous upper bound for $\log t$ in terms of $\abs{A_f}(4tB)$ by a similar proof.

	\emph{Case 2:} We then consider the other possible case, that there exists $s_0 \in [0, \infty]$ such that $\{\abs{f} > s\}$ has only bounded components whenever $s > s_0$, and $\{\abs{f} < s\}$ has only bounded components whenever $s < s_0$. In this case, we show that $\abs{f}^{-1} \abs{Df} \in L^n(\R^n)$. 
	
	Indeed let $s > s_0$. Since $f$ has a $(K, \Sigma)$-quasiregular value at $0$, and since $\{\abs{f} > s\}$ has only bounded components, we may use Lemma \ref{lem:superlevel_bdd_component} with $\Psi(t) = t^{-n}$ and $\tilde{\Sigma} = \abs{f}^n \Sigma$ to conclude that
	\[
		\int_{\{\abs{f} > s\}} \frac{\abs{Df}^n}{\abs{f}^n} \leq C(n) \int_{\{\abs{f} > s\}} \Sigma \leq C(n) \Sigma(\R^n).
	\]
	Monotone convergence consequently yields that
	\[
		\int_{\{\abs{f} > s_0\}} \frac{\abs{Df}^n}{\abs{f}^n} 
		\lesssim_n \Sigma(\R^n) <	\infty.
	\]
	
	We then consider the map $\tilde{f} = \iota \circ f$, where $\iota$ is the conformal inversion across the unit $(n-1)$-sphere. Then since we have $0 \notin f(\R^n)$, we obtain that $\tilde{f} \in C(\R^n, \R^n) \cap W^{1,n}_\loc(\R^n, \R^n)$ and $0 \notin \tilde{f}(\R^n)$. By using the conformality of $\iota$ and the fact that $\abs{\iota(y)} = \abs{y}^{-1}$ and $\abs{D\iota(y)} = \abs{y}^{-2}$, we obtain that
	\[
		\frac{\smallabs{D\tilde{f}}}{\smallabs{\tilde{f}}} = \frac{(\abs{D\iota} \circ f)\abs{Df}}{\abs{f}^{-1}} = \frac{\abs{Df}}{\abs{f}}.
	\]
	It also follows that the map $\tilde{f}$ also has a $(K, \Sigma)$-quasiregular value at $0$, since
	\[
		\smallabs{D\tilde{f}}^n = \frac{\abs{Df}^n}{\abs{f}^{2n}} \leq \frac{K J_f}{\abs{f}^{2n}} + \frac{\Sigma}{\abs{f}^n} = KJ_{\tilde{f}} + \smallabs{\tilde{f}}^n \Sigma.
	\] 
	Furthermore, for every $\tilde{s} > s_0^{-1}$, we have that $\{\smallabs{\tilde{f}} > \tilde{s}\} = \{\abs{f} < s^{-1}\}$ has no unbounded components. Hence, we may similarly as before use Lemma \ref{lem:superlevel_bdd_component} to obtain that
	\begin{equation*}
		\int_{\{\abs{f} < s_0\}} \frac{\abs{Df}^n}{\abs{f}^n}
		= \int_{\{\smallabs{\tilde{f}} > s_0^{-1}\}} \frac{\smallabs{D\tilde{f}}}{\smallabs{\tilde{f}}} \leq C(n)\Sigma(\R^n) < \infty.
	\end{equation*}
	In conclusion,
	\[
		\int_{\{\abs{f} \neq s_0\}} \frac{\abs{Df}^n}{\abs{f}^n} \lesssim_n \Sigma(\R^n) < \infty.
	\]
	
	It remains to show that if $0 < s_0 < \infty$, then the integral of $\abs{f}^{-n} \abs{Df}^n$ over $\{\abs{f} = s_0\}$ is finite. If $s_0 \in \{0, \infty\}$, then this set is empty. Otherwise, for a.e.\ $x \in \{\abs{f} = s_0\}$, we may estimate as follows:
	\[
		\frac{\abs{Df(x)}^n}{\abs{f(x)}^n} = \frac{\abs{Df(x)}^n}{s_0^n} \leq \frac{K J_f(x)}{s_0^n} + \Sigma(x).
	\] 
	Here, $\Sigma$ has finite integral over $\R^n$, and $J_f = 0$ a.e.\ in $\{\abs{f} = s_0\}$ due to the set having an image with zero Hausdorff $n$-measure. The proof of the lemma is hence complete.
\end{proof}

\subsection{Induced mapping and higher regularity}

The result of Lemma \ref{lem:two_cases_lemma} brings us into contact with prior ideas from \cite{Kangasniemi-Onninen_Heterogeneous}. In particular, suppose that a map $f \in C(\R^n, \R^n) \cap W^{1,n}_\loc(\R^n, \R^n)$ has a $(K, \Sigma)$-quasiregular value at $0$, with $\Sigma \in L^1(\R^n) \cap L^{1+\eps}_\loc(\R^n)$ for some $\eps > 0$, and suppose also that $0 \notin f(\R^n)$. Consider the map
\begin{equation}\label{eq:G_def}
	G \colon \R^n \to \R \times \S^{n-1}, \quad G(x) = \left(\log \abs{f(x)}, \frac{f(x)}{\abs{f(x)}} \right).
\end{equation}
Then $G$ is continuous, and if we embed $\R \times \S^{n-1}$ isometrically to $\R^{n+1}$, we see that $G \in W^{1,n}_\loc(\R^n, \R^{n+1})$. Moreover, if we equip $\R \times \S^{n-1}$ with the standard orientation, then $G$ has a valid Jacobian $J_G$ defined a.e.\ in $\R^n$ by 
\[
	J_G \vol_n = dG_{\R} \wedge  G_{\S^{n-1}}^* \vol_{\S^{n-1}} = dG_{\R} \wedge G_{\S^{n-1}}^*\hodge d(2^{-1} \abs{x}^2).
\]

By similar computations as in \cite[Lemma 7.1]{Kangasniemi-Onninen_Heterogeneous}, we obtain that
\begin{equation}\label{eq:DG_and_JG}
	\abs{DG} = \frac{\abs{Df}}{\abs{f}},  \quad J_G = \frac{J_f}{\abs{f}^n},
\end{equation}
and therefore
\begin{equation}\label{eq:Sigma_estimate_for_G}
	\abs{DG}^n \leq K J_G + \Sigma.
\end{equation}
In particular, if $\abs{Df}/\abs{f} \in L^n(\R^n)$, then \eqref{eq:DG_and_JG} immediately yields that $\abs{DG} \in L^n(\R^n)$.

Our strategy is to show that if $\Sigma \in L^{1+\eps}(\R^n) \cap L^{1-\eps}(\R^n)$, then $G$ is bounded. The first step towards this is to show that $\abs{DG}$ also has higher integrability. The argument is a standard proof based on reverse H\"older inequalities, and has already been recounted in e.g.\ \cite[Lemma 6.1]{Kangasniemi-Onninen_1ptReshetnyak} and \cite[Section 2.1]{Dolezalova-Kangasniemi-Onninen_MGFD-cont} in similar situations. Regardless, we state the result and recall the short proof, as the previous statements do not cover the case where the target of $G$ is $\R \times \S^{n-1}$.
\begin{lemma}\label{lem:global_higher_int}
	Suppose that $G \colon \R^n \to \R \times \S^{n-1}$ is continuous, that $G \in W^{1,n}_\loc(\R^n, \R \times \S^{n-1})$, and that $\abs{DG} \in L^n(\R^n)$. If $G$ satisfies \eqref{eq:Sigma_estimate_for_G} with $\Sigma \in L^1(\R^n) \cap L^{1+\eps}(\R^n)$ for some $\eps > 0$, then there exists $\eps' \in (0, \eps)$ such that
	\[
	\int_{\R^n} \abs{DG}^{(1+\eps')n} \lesssim_{n, K} \int_{\R^n} \Sigma^{1+\eps'} < \infty.
	\]
\end{lemma}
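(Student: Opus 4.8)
The plan is to derive a reverse H\"older inequality for $\abs{DG}^n$ on balls from \eqref{eq:Sigma_estimate_for_G} and then apply the Gehring-type self-improvement of integrability, following the same scheme as the arguments recounted in \cite[Lemma 6.1]{Kangasniemi-Onninen_1ptReshetnyak} and \cite[Section 2.1]{Dolezalova-Kangasniemi-Onninen_MGFD-cont}. The only new feature is the manifold target $\R \times \S^{n-1}$, which is handled by exploiting the null-Lagrangian structure of $J_G$.

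First I would fix a ball $B = \B^n(x_0, r)$, pick a cutoff $\eta \in C^\infty_0(2B, [0, 1])$ with $\eta \equiv 1$ on $B$ and $\abs{d\eta} \leq 2/r$, and write $G = (G_\R, G_{\S^{n-1}})$. The key point is that the $(n-1)$-form $\beta = G_{\S^{n-1}}^* \vol_{\S^{n-1}}$ is weakly closed: covering $\R^n$ by domains on which the continuous map $G_{\S^{n-1}}$ takes values in a single chart of $\S^{n-1}$, one writes $\vol_{\S^{n-1}} = d\lambda$ there with a smooth (hence locally Lipschitz) primitive $\lambda$, and Lemma \ref{lem:lip_form_pullback} together with $dd = 0$ gives $d\beta = 0$. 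With $c = \dashint_{2B} G_\R$, the $W^d$-form Leibniz rule (the extension of \eqref{eq:leibniz_rule}) and $d\beta = 0$ then yield $J_G \vol_n = dG_\R \wedge \beta = d\bigl((G_\R - c)\beta\bigr)$, so that by \eqref{eq:d_integral_is_zero},
\[
	\int_{\R^n} \eta^n J_G = -\int_{\R^n} d(\eta^n) \wedge (G_\R - c)\beta .
\]
Using $\abs{\beta} \leq \abs{DG_{\S^{n-1}}}^{n-1} \leq \abs{DG}^{n-1}$ from \eqref{eq:pullback_norm_estimate} and $\abs{d(\eta^n)} \leq n\eta^{n-1}\abs{d\eta}$, inserting this bound into \eqref{eq:Sigma_estimate_for_G}, and using Young's inequality to absorb a small multiple of the (finite, since $\abs{DG} \in L^n(\R^n)$) integral $\int \eta^n \abs{DG}^n$, I obtain the Caccioppoli-type estimate
\[
	\int_B \abs{DG}^n \lesssim_{n, K} \frac{1}{r^n}\int_{2B} \abs{G_\R - c}^n + \int_{2B} \Sigma .
\]
An application of the Sobolev--Poincar\'e inequality with the subcritical exponent $n/2$ (valid since $n \geq 2$ and $G_\R$ is real-valued) to $G_\R - c$ on $2B$ converts the first term into $\bigl(\dashint_{2B} \abs{DG}^{n/2}\bigr)^2$ up to a dimensional constant, giving the reverse H\"older inequality
\[
	\dashint_B \abs{DG}^n \lesssim_{n, K} \Bigl(\dashint_{2B} \abs{DG}^{n/2}\Bigr)^2 + \dashint_{2B}\Sigma
\]
on every ball $B \subset \R^n$.

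With this reverse H\"older inequality established, Gehring's lemma in its inhomogeneous form (cf.\ \cite{Iwaniec-Martin_book} and the applications in \cite[Lemma 6.1]{Kangasniemi-Onninen_1ptReshetnyak}), applied with the $L^1(\R^n)$-function $\abs{DG}^n$ and the $L^{1+\eps}(\R^n)$-function $\Sigma$, yields an exponent $\eps' \in (0, \eps)$ for which $\abs{DG}^{(1+\eps')n} \in L^1_\loc(\R^n)$ together with a local estimate $\bigl(\dashint_B \abs{DG}^{(1+\eps')n}\bigr)^{1/(1+\eps')} \lesssim_{n, K} \dashint_{2B}\abs{DG}^n + \bigl(\dashint_{2B}\Sigma^{1+\eps'}\bigr)^{1/(1+\eps')}$. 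Taking $B = \B^n(0, R)$, raising this to the power $1 + \eps'$, multiplying by $\abs{B}$, and letting $R \to \infty$, the term coming from $\dashint_{2B}\abs{DG}^n$ becomes a constant multiple of $\abs{B}^{-\eps'}\bigl(\int_{2B}\abs{DG}^n\bigr)^{1 + \eps'} \to 0$ --- this is exactly where the global hypothesis $\abs{DG} \in L^n(\R^n)$ is used --- while the remaining term stays bounded by a constant multiple of $\int_{\R^n}\Sigma^{1+\eps'}$; monotone convergence then gives the claimed $\int_{\R^n}\abs{DG}^{(1+\eps')n} \lesssim_{n, K} \int_{\R^n}\Sigma^{1+\eps'} < \infty$.

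The step I expect to take the most care is the reverse H\"older inequality itself, and in particular the two places where the manifold target intervenes: making the null-Lagrangian integration by parts rigorous for the merely $W^{1,n}_\loc$ map $G$ --- which rests on the $W^d$-form calculus of Section \ref{sect:sobolev_prelims} (Lemma \ref{lem:lip_form_pullback}, Corollary \ref{cor:lip_form_pullback_wedges}, \eqref{eq:leibniz_rule}, \eqref{eq:d_integral_is_zero}) much as in the computations behind \cite[Lemma 7.1]{Kangasniemi-Onninen_Heterogeneous} --- and carrying the additive term $\Sigma$ through the estimate, which forces the inhomogeneous rather than the homogeneous form of Gehring's lemma and is precisely why the hypothesis $\Sigma \in L^{1+\eps}(\R^n)$ (rather than merely $\Sigma \in L^1(\R^n)$) is needed. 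The globalization is comparatively routine once the critical integrability $\abs{DG} \in L^n(\R^n)$ is in hand.
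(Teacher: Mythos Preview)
Your proposal is correct and follows essentially the same route as the paper: a Caccioppoli inequality obtained from the null-Lagrangian structure of $J_G$, a reverse H\"older inequality via Sobolev--Poincar\'e, and then Gehring's lemma. The only differences are in execution --- the paper cites \cite[Lemma 2.3]{Kangasniemi-Onninen_Heterogeneous} for the Caccioppoli step rather than deriving it from the $W^d$-form calculus, uses H\"older (with Sobolev--Poincar\'e exponent $n^2/(n+1)$) in place of your Young-and-absorb step with exponent $n/2$, and invokes a global form of Gehring's lemma \cite[Lemma 3.2]{Iwaniec-Gehring_Lemma} that gives the $\R^n$-estimate directly, whereas you globalize the local Gehring output by hand via $R \to \infty$.
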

\begin{proof}
	Let $Q$ be a cube in $\R^n$ with side length $r$. We again select a cutoff function $\eta \in C^\infty(\R^n, [0, 1])$ s.t.\ $\eta\vert_Q \equiv 1$, $\spt \eta \subset 2Q$, and $\norm{\nabla \eta}_{L^\infty} \lesssim_n r^{-1}$, where we interpret $2Q$ as the cube with the same center as $Q$ but doubled side length. First, \eqref{eq:Sigma_estimate_for_G} yields
	\[
		\int_{\R^n} \eta^n \abs{DG}^n \leq K\int_{\R^n} \eta^n J_G + \int_{\R^n} \eta^n \Sigma.
	\] 
	We then use a Caccioppoli-type inequality for functions $\R^n \to \R \times M$, where $M$ is an oriented Riemannian $(n-1)$-manifold without boundary; see \cite[Lemma 2.3]{Kangasniemi-Onninen_Heterogeneous}. That is, if $G_\R$ is the $\R$-coordinate function of $G$, we obtain
	\[
		\int_{\R^n} \eta^n J_G \leq n \int_{\R^n} \eta^{n-1} \abs{DG}^{n-1} \abs{\nabla \eta} \abs{G_\R - c}
	\]
	for every $c \in \R$. By combining these estimates, using H\"older's inequality, dividing by $r^n$, and applying the assumptions on $\eta$, we obtain
	\[
		\dashint_{Q} \abs{DG}^n \lesssim_n K r^{-1} \left( \dashint_{2Q} \abs{G_{\R} - c}^{n^2} \right)^\frac{1}{n^2} \left( \dashint_{2Q} \abs{DG}^{\frac{n^2}{n + 1}} \right)^\frac{n^2 - 1}{n^2} + \dashint_{2Q} \Sigma.
	\]
	We then use the Sobolev-Poincar\'e inequality on the first integral, obtaining
	\[
		r^{-1} \left( \dashint_{2Q} \abs{G_{\R} - c}^{n^2} \right)^\frac{1}{n^2} \lesssim_n \left( \dashint_{2Q} \abs{DG_\R}^{\frac{n^2}{n+1}} \right)^\frac{n+1}{n^2} \leq \left( \dashint_{2Q} \abs{DG}^{\frac{n^2}{n+1}} \right)^\frac{n+1}{n^2}
	\]
	when $c = (G_\R)_{2Q}$. In conclusion, we obtain the reverse H\"older inequality
	\[
		\dashint_{Q} \abs{DG}^n \lesssim_n K \left( \dashint_{2Q} \abs{DG}^{\frac{n^2}{n + 1}} \right)^\frac{n + 1}{n} + \dashint_{2Q} \Sigma.
	\]
	As this holds for all cubes $Q$, we may hence use Gehring's lemma (see e.g.\ \cite[Lemma 3.2]{Iwaniec-Gehring_Lemma}), obtaining that for some $\eps' \in (0, \eps)$ we have the estimate
	\[
		\int_{\R^n} \abs{DG}^{n(1+\eps')} \lesssim_{n, K} \int_{\R^n} \Sigma^{1 + \eps'} < \infty.
	\]
\end{proof}

\subsection{Lower integrability and boundedness}

The most natural way to continue would be to obtain a lower integrability counterpart of Lemma \ref{lem:global_higher_int}, showing that $\abs{DG} \in L^{(1-\eps')n}(\R^n)$ for some $\eps' > 0$. Indeed, with both higher and lower integrability, boundedness of $G_\R$ would follow from a standard Riesz potential estimate \cite[Lemma 7.16]{Gilbarg-Trudinger_Book}. Unfortunately, we do not currently know a way to achieve this; lower integrability in our setting was discussed in \cite[Lemma 7.2]{Kangasniemi-Onninen_Heterogeneous}, but the proof of this result has a flaw. We have recovered the result \cite[Theorem 1.3]{Kangasniemi-Onninen_Heterogeneous} that this lower integrability tool was used to prove in the corrigendum \cite{Kangasniemi-Onninen_Heterogeneous_Corrigendum}, but we have no fix for the lower integrability result itself.

However, we can still achieve boundedness for $G_\R$ similarly to how we fixed the proof of \cite[Theorem 1.3]{Kangasniemi-Onninen_Heterogeneous} in \cite{Kangasniemi-Onninen_Heterogeneous_Corrigendum}. Namely, we used the same strategy as in the attempted proof of \cite[Lemma 7.2]{Kangasniemi-Onninen_Heterogeneous}, which was originally based on ideas from \cite{Faraco-Zhong_Caccioppoli}, to prove a significantly weaker logarithmic version of the original flawed lower integrability result. This version is also enough to stand in for proper lower integrability here, though the margin by which it manages this is small enough that a more refined proof of boundedness is required. Note that even though we're only concerned with the $L^p$-regularity of $\Sigma$ in this paper, we have to consider the logarithmic scale of lower integrability here, as our argument stops working on the $L^p$-scale.

In particular, the technical lower integrability result we use is as follows; we refer to \cite{Kangasniemi-Onninen_Heterogeneous_Corrigendum} for the proof.

\begin{lemma}[{\cite[Lemma 7.2 (revised)]{Kangasniemi-Onninen_Heterogeneous_Corrigendum}}]\label{lem:pseudo_lower_integrability}
	Suppose that $G \colon \R^n \to \R \times \S^{n-1}$ is continuous and non-constant, that $G \in W^{1,n}_\loc(\R^n, \R \times \S^{n-1})$, and that $\abs{DG} \in L^n(\R^n)$. If $G$ satisfies \eqref{eq:Sigma_estimate_for_G} with $\Sigma \in L^1(\R^n) \cap L^{1-\eps}(\R^n)$ for some $\eps \in (0, 1)$, then
	\[
		\int_{\R^n} \abs{DG}^n \log^n\left(1 + \frac{1}{M(\abs{DG})}\right) < \infty,
	\]
	where $M$ stands for the (centered) Hardy-Littlewood maximal function.
\end{lemma}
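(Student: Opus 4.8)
The plan is to deduce this logarithmic ``anti-concentration'' estimate from the same reverse Hölder machinery that powers Lemma~\ref{lem:global_higher_int}, now fed into a stopping-time iteration in the spirit of Faraco and Zhong's Caccioppoli-type estimates~\cite{Faraco-Zhong_Caccioppoli}. This is exactly the content of \cite[Lemma 7.2 (revised)]{Kangasniemi-Onninen_Heterogeneous_Corrigendum}; the outline below indicates the structure one should follow.

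First I would record the basic reverse Hölder inequality. Exactly as in the proof of Lemma~\ref{lem:global_higher_int}, the distortion bound~\eqref{eq:Sigma_estimate_for_G}, the Caccioppoli inequality for maps into $\R\times\S^{n-1}$ \cite[Lemma 2.3]{Kangasniemi-Onninen_Heterogeneous}, Hölder's inequality, and the Sobolev--Poincar\'e inequality applied to the $\R$-coordinate of $G$ give, for every cube $Q$,
\[
	\dashint_{Q} \abs{DG}^n \lesssim_{n,K} \left( \dashint_{2Q} \abs{DG}^{\frac{n^2}{n+1}} \right)^{\frac{n+1}{n}} + \dashint_{2Q} \Sigma .
\]
The crucial feature is the sub-unit exponent $n^2/(n+1)<n$ on the right: it makes $\abs{DG}^n$ behave, modulo the $\Sigma$-error, like a quantity enjoying a self-improving inequality from \emph{below} as well as from above. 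The improvement from above is Gehring's lemma (Lemma~\ref{lem:global_higher_int}); the present lemma is the much more fragile improvement from below, which survives only in the stated logarithmic form.

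Next I would reduce the claim to a quantitative decay estimate. Since $\abs{DG}\in L^n(\R^n)$, the part of the integral over $\{M(\abs{DG})\geq 1\}$, where the weight $\log^n(1+1/M(\abs{DG}))$ is bounded, is finite. On $\{M(\abs{DG})<1\}$ I would split dyadically into the sets $E_k=\{2^{-k}<M(\abs{DG})\leq 2^{-k+1}\}$, $k\geq 1$, where the weight is comparable to $k^n$, so that it suffices to prove
\[
	\int_{\{M(\abs{DG})\leq \lambda\}} \abs{DG}^n \lesssim \lambda^{\delta}
\]
for all small $\lambda>0$ with some $\delta=\delta(n,\eps)>0$; then $\sum_k k^n 2^{-k\delta}<\infty$. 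To obtain this decay I would run the Faraco--Zhong stopping-time argument: fix $\lambda$, perform a Calder\'on--Zygmund decomposition of $\abs{DG}^n$ at a descending sequence of levels starting from $\lambda$, and iterate the reverse Hölder inequality on the resulting cubes. On cubes where the maximal function of $\abs{DG}$ has not yet reached the current level, the exponent $n^2/(n+1)$ yields a genuine gain of a power of the level at each step, while the accumulated errors $\dashint\Sigma$ are summed using the global integrability of $\Sigma$; it is precisely here, and nowhere else, that $\Sigma\in L^{1-\eps}(\R^n)$ rather than merely $\Sigma\in L^1(\R^n)$ is needed, as $L^{1-\eps}$ supplies the extra power of the level that makes the error series converge with room to spare.

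The hard part is the bookkeeping of this iteration: one must track, across all stopping levels simultaneously, both the geometric gain from the sub-unit exponent and the $\Sigma$-errors, and verify that the total loss over the $O(\log(1/\lambda))$ admissible steps is bounded by a power $\log^n$ of the relevant ratio --- which is what ultimately places the factor $\log^n(1+1/M(\abs{DG}))$, and nothing larger, in the conclusion. This is the portion of \cite{Kangasniemi-Onninen_Heterogeneous} that contained the error and is repaired in \cite{Kangasniemi-Onninen_Heterogeneous_Corrigendum}, so I would cite it directly, having only to note that the target manifold $\R\times\S^{n-1}$ enters solely through the Caccioppoli inequality \cite[Lemma 2.3]{Kangasniemi-Onninen_Heterogeneous}, which is already stated in that generality, and that the non-constancy of $G$ is used to guarantee the stopping-time decomposition is nontrivial.
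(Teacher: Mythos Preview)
The paper does not prove this lemma at all; it simply states the result and refers to \cite{Kangasniemi-Onninen_Heterogeneous_Corrigendum} for the proof (see the sentence immediately preceding the lemma). Your proposal is therefore consistent with the paper's treatment: you sketch the reverse H\"older/Faraco--Zhong stopping-time mechanism and then, for the delicate bookkeeping, defer to the same corrigendum. In that sense there is nothing to compare --- you have supplied more detail than the paper itself does, and your final disposition (cite the corrigendum directly) matches the paper exactly.
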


We then proceed to combine Lemmas \ref{lem:global_higher_int} and \ref{lem:pseudo_lower_integrability} in order to show the boundedness of the map $G$.
\begin{lemma}\label{lem:boundedness_lemma}
	Suppose that $G \colon \R^n \to \R \times \S^{n-1}$ is continuous, that $G \in W^{1,n}_\loc(\R^n, \R \times \S^{n-1})$, and that $\abs{DG} \in L^n(\R^n)$. If $G$ satisfies \eqref{eq:Sigma_estimate_for_G} with $\Sigma \in L^{1-\eps}(\R^n) \cap L^{1+\eps}(\R^n)$ for some $\eps \in (0, 1)$, then the $\R$-component $G_\R$ of $G$ is bounded.
\end{lemma}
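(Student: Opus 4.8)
Since the case of constant $G$ is trivial, I would assume $G$ non-constant throughout, so that both Lemma \ref{lem:global_higher_int} and Lemma \ref{lem:pseudo_lower_integrability} apply: there is $\eps' \in (0, \eps)$ with $\abs{DG} \in L^{n(1+\eps')}(\R^n)$, and moreover $\int_{\R^n} \abs{DG}^n \log^n\bigl(1 + 1/M(\abs{DG})\bigr) < \infty$. The overall plan is to reduce boundedness of $G_\R$ to a \emph{uniform Riesz-potential bound} for $\abs{DG}$, and then to prove that bound by a good set / bad set dyadic decomposition in which these two integrability statements are used in tandem.

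First I would use $\abs{DG} \in L^{n(1+\eps')}(\R^n)$ together with Morrey's inequality (applied on unit balls) to get a uniform modulus of continuity for $G$; in particular $\sup_{x} \abs{G_\R(x) - (G_\R)_{\B^n(x,1)}} \lesssim_{n,K,\eps'} \norm{DG}_{L^{n(1+\eps')}(\R^n)}$, so it suffices to bound the averages $(G_\R)_{\B^n(x,1)}$ uniformly in $x$. Using $\abs{\nabla G_\R} \le \abs{DG}$, a telescoping chain-of-balls estimate combined with the Poincar\'e inequality (equivalently, the pointwise Riesz representation $\abs{G_\R(x) - (G_\R)_{\B^n(x,R)}} \lesssim_n \int_{\B^n(x,R)} \abs{\nabla G_\R(y)}\abs{x-y}^{1-n}\dd y$) reduces the problem to showing
\[
	\sup_{x \in \R^n} \int_{\R^n} \frac{\abs{DG(y)}}{(1+\abs{x-y})^{n-1}} \dd y < \infty .
\]
The portion over $\B^n(x,1)$ is $\lesssim_{n,\eps'} \norm{DG}_{L^{n(1+\eps')}(\R^n)}$ uniformly by H\"older, so the real issue is the tail $\int_{\abs{x-y}\ge 1}$, which I would split over the dyadic annuli $A_j = A_j(x) = \{\,2^j \le \abs{x-y} < 2^{j+1}\,\}$. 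It should be emphasized here that the naive estimate $\int_{A_j} \abs{x-y}^{1-n}\abs{DG} \lesssim_n \norm{DG}_{L^n(A_j)}$ only yields a series $\sum_j \norm{DG}_{L^n(A_j)}$ whose convergence does not follow from $\abs{DG} \in L^n$ — the shortfall is exactly logarithmic — and this is precisely the gap that the refined integrability is meant to fill.

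To handle $\int_{A_j} \abs{x-y}^{1-n}\abs{DG}$ I would fix a threshold $\lambda_j \sim 2^{-j(1-\gamma)}$, with $\gamma \in (0,1)$ to be tuned, and split $A_j = G_j \cup B_j$, where $G_j = \{\,y \in A_j : M(\abs{DG})(y) \le \lambda_j\,\}$. On the good set $G_j$ one has $\log\bigl(1+1/M(\abs{DG})\bigr) \gtrsim j$, so H\"older against the finite quantity from Lemma \ref{lem:pseudo_lower_integrability} gains a factor $j^{-1}$, and the corresponding contribution is summable because $\sum_j j^{-n/(n-1)} < \infty$ for $n \ge 2$. On the bad set $B_j$ I would first localize the maximal function, $M(\abs{DG}) \lesssim_n M(\abs{DG}\chi_{\widetilde A_j}) + 2^{-j}\norm{DG}_{L^n(\R^n)}$ on $A_j$ with $\widetilde A_j$ a bounded fattening of $A_j$, so that the weak-$(n,n)$ inequality for $M$ bounds $\abs{B_j} \lesssim_n 2^{jn}\lambda_j^{-n}\norm{DG}_{L^n(\widetilde A_j)}^n$; H\"older with the higher exponent $n(1+\eps')$ from Lemma \ref{lem:global_higher_int} then produces a bound of the shape $2^{j(\eps'/(1+\eps') - n\gamma(1-\theta))}\norm{DG}_{L^n(\widetilde A_j)}^{n(1-\theta)}\norm{DG}_{L^{n(1+\eps')}(A_j)}$ with $\theta = 1/(n(1+\eps'))$. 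Choosing $\gamma$ so that the exponent of $2^j$ is nonpositive, the remaining sum converges by H\"older for series: the exponents $1-\theta$ and $\theta$ are conjugate, and $\sum_j \norm{DG}_{L^n(\widetilde A_j)}^n$ and $\sum_j \norm{DG}_{L^{n(1+\eps')}(A_j)}^{n(1+\eps')}$ are both finite. Adding the good- and bad-set contributions, and handling the scales $2^j \gtrsim \abs{x}$ by the plain $L^n$-tail estimate (and the scales $2^j \lesssim 1$ by H\"older as above), gives the uniform potential bound, hence the boundedness of $G_\R$.

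The step I expect to be the main obstacle is this last one. The estimate sits exactly at the borderline where the Riesz potential $I_1$ fails to map $L^n$ into $L^\infty$, so neither ingredient suffices on its own: Lemma \ref{lem:global_higher_int} is only a local improvement and introduces the growing dyadic factor $2^{j\eps'/(1+\eps')}$ at infinity, while the logarithmic lower integrability of Lemma \ref{lem:pseudo_lower_integrability} lies only logarithmically below $L^n$. They have to be balanced against each other, and the good/bad threshold exponent $\gamma$ must be chosen precisely so that the polynomial growth on the bad set is exactly absorbed by the decay of the $L^n$- and $L^{n(1+\eps')}$-tails while the good set still retains a logarithmic gain comparable to $\log(1+\abs{x})$. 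A secondary technical point is that the chain-of-balls reduction must be organized so that the contributions of balls centered far from the origin are genuinely subsumed by the single potential integral above, and the borderline dyadic scales $2^j \sim 1$ and $2^j \sim \abs{x}$ need separate, though routine, treatment.
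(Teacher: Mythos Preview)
Your approach is correct and uses the same two inputs as the paper --- the higher integrability of Lemma~\ref{lem:global_higher_int} and the logarithmic lower integrability of Lemma~\ref{lem:pseudo_lower_integrability} --- but the way you combine them at large scales is genuinely different. The paper also telescopes along a chain of balls from $0$ to $x_0$ and treats the small-radius balls via $L^{n(1+\eps')}$, exactly as you do for the unit-scale part. For the large-radius balls, however, the paper avoids both the localization of $M$ and the weak-type estimate: it uses the elementary pointwise inequality
\[
	1 \;\le\; r_i^{\frac{n-1}{2}} M^{n-1}(\abs{DG}) \;+\; \frac{\log\bigl(1+1/M(\abs{DG})\bigr)}{\log\bigl(1+\sqrt{r_i}\bigr)},
\]
which in your language is a \emph{soft} good/bad split at threshold $\sim r_i^{-1/2}$ (i.e.\ $\gamma = 1/2$). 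The first term is then summed using only $\norm{M(\abs{DG})}_{L^n}^n \lesssim \norm{DG}_{L^n}^n$ and the geometric decay $\sum r_i^{-(n-1)/2} < \infty$; no higher integrability is needed here. The second term is handled exactly as your good-set term, via H\"older in the integral followed by H\"older for the sum against $\sum \log^{-n/(n-1)}(r_i) < \infty$. So the paper's large-scale ``bad'' estimate is more elementary than yours, while your version trades that simplicity for a more systematic use of the weak-$(n,n)$ inequality and the higher exponent.

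Two small points on your write-up. First, the bound you state for $\abs{B_j}$ should read $\abs{B_j} \lesssim_n \lambda_j^{-n} \norm{DG}_{L^n(\widetilde A_j)}^n$; the extra factor $2^{jn}$ you wrote is not what weak-$(n,n)$ gives, and with it the subsequent exponent would be positive --- but your displayed exponent $\eps'/(1+\eps') - n\gamma(1-\theta)$ is the one obtained \emph{without} that factor, so this is evidently a slip. Second, the remark about ``scales $2^j \gtrsim \abs{x}$'' does not fit your centred-at-$x$ decomposition; no such truncation is needed once you have reduced to the potential $\int \abs{DG(y)}(1+\abs{x-y})^{1-n}\,dy$, and the reduction itself goes through the chain-of-balls estimate linking $G_\R(x)$ to $G_\R(0)$ rather than via an $L^n$ tail.
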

\begin{proof}
	We may clearly assume that $G$ is non-constant, the claim is trivial for a constant function. Thus, by Lemma \ref{lem:global_higher_int} we have $\abs{DG} \in L^{n + \eps'}(\R^n)$ for some $\eps' > 0$, and by Lemma \ref{lem:pseudo_lower_integrability}, we have $\abs{DG} \log(1+M^{-1}(\abs{DG})) \in L^n(\R^n)$.
	
	We fix $x_0 \in \R^n$, with aim to estimate $\abs{G_\R(x_0) - G_\R(0)}$. We base the proof on a standard chain of balls -argument used in e.g.\  \cite{Hajlasz-Koskela-SobolevmetPoincare}. In particular, for all $i \in \Z$, we let $r_i = \abs{x_0}2^{-\abs{i} - 2}$, and select balls $B_i$, where $B_i = \B^n(2^{-\abs{i}-1} x_0, r_i)$ for $i \leq 0$ and $B_i = \B^n((1-2^{-\abs{i}-1})x_0, r_i)$ for $i \geq 0$. See Figure \ref{fig:chain_of_balls} for an illustration. 
	\begin{figure}[h]
		\centering
		\begin{tikzpicture}[scale=2.5]
			\draw (1,0) circle (1/2);
			
			\draw (1/2,0) circle (1/4);
			\draw (1/4,0) circle (1/8);
			\draw (1/8,0) circle (1/16);
			\draw (1/16,0) circle (1/32);
			\draw (1/32,0) circle (1/64);
			
			\draw (2-1/2,0) circle (1/4);
			\draw (2-1/4,0) circle (1/8);
			\draw (2-1/8,0) circle (1/16);
			\draw (2-1/16,0) circle (1/32);
			\draw (2-1/32,0) circle (1/64);
						
			\filldraw[black] (2,0) circle (0.01);
			\filldraw[black] (0,0) circle (0.01);
						
			\node at (0,0) [anchor=east] {$0$};
			\node at (2,0) [anchor=west] {$x_0$};
		\end{tikzpicture}
		\caption{The chain of balls $B_i$ from $0$ to $x_0$.}\label{fig:chain_of_balls}
	\end{figure}
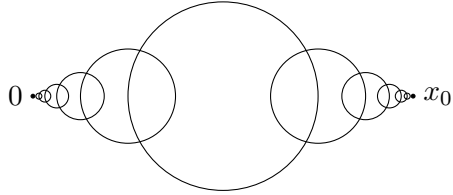

	The balls form a chain where the center of $B_{i}$ is on the boundary of $B_{i - \sgn(i)}$ for $i \neq 0$. Moreover, no point in $\R^n$ is contained in more than two balls $B_i$, and the overlap of consecutive balls $B_i \cap B_{i-\sgn(i)}$ contains a ball $B_i'$ with radius $r_i' = r_i/2$. By continuity, we also have that the integral averages $(G_\R)_{B_i}$ converge to $G_\R(0)$ as $i \to -\infty$, and to $G_\R(x_0)$ as $i \to \infty$.
	
	We thus obtain a telescopic sum estimate
	\[
		\abs{G_\R(x_0) - G_\R(0)} \leq \sum_{i=-\infty}^\infty \abs{(G_\R)_{B_{i+1}} - (G_\R)_{B_{i}}}.
	\]
	We show here the estimate for the upper end $i \geq 0$ of the series, as the estimate for the lower end $i < 0$ is analogous. By taking advantage of the ball $B_{i+1}'$ contained in $B_i \cap B_{i+1}$ and by using the Sobolev-Poincar\'e inequality, we obtain
	\begin{align*}
		\abs{(G_\R)_{B_{i+1}} - (G_\R)_{B_{i}}} 
		&\leq \big\lvert(G_\R)_{B_{i+1}'} - (G_\R)_{B_{i}}\big\rvert 
			+ \big\lvert(G_\R)_{B_{i+1}'} - (G_\R)_{B_{i+1}}\big\rvert\\
		&\leq \dashint_{B'_{i+1}} \abs{G_\R - (G_\R)_{B_{i}}}
			+ \dashint_{B'_{i+1}} \abs{G_\R - (G_\R)_{B_{i+1}}}\\
		&\leq 4^n \dashint_{B_{i}} \abs{G_\R - (G_\R)_{B_{i}}}
			+ 2^n \dashint_{B_{i+1}} \abs{G_\R - (G_\R)_{B_{i+1}}}\\
		&\lesssim_n r_i \, \dashint_{B_{i}} \abs{DG} + r_{i+1} \, \dashint_{B_{i+1}} \abs{DG}.
	\end{align*}
	Thus,
	\begin{equation*}
		\sum_{i=0}^\infty \abs{(G_\R)_{B_{i+1}} - (G_\R)_{B_{i}}} \lesssim_n \sum_{i=0}^\infty r_i^{-(n-1)} \int_{B_{i}} \abs{DG}.
	\end{equation*}

	Since $r_i$ is decreasing with respect to $i$ when $i \geq 0$ and tends to zero as $i \to \infty$, there exists an $i_0 \in \Z_{\geq 0}$ such that $r_i \leq 2$ when $i \geq i_0$, and $r_i > 2$ when $0 \leq i < i_0$. Thus, the end of the series can now be estimated using H\"older's inequality, yielding 
	\begin{multline*}
		\sum_{i=i_0}^\infty r_i^{-(n-1)} \int_{B_{i}} \abs{DG}
		\lesssim_n \sum_{i=i_0}^\infty r_i^{\frac{\eps'}{n+\eps'}} \left( \int_{B_i} \abs{DG}^{n+\eps'} \right)^\frac{1}{n+\eps'}\\
		\leq \norm{DG}_{L^{n+\eps'}} \sum_{i=i_0}^\infty r_i^{\frac{\eps'}{n+\eps'}} \lesssim_{n, \eps'} \norm{DG}_{L^{n+\eps'}} r_{i_0}^{\frac{\eps'}{n+\eps'}} \leq 2\norm{DG}_{L^{n+\eps'}}.
	\end{multline*}
	In particular, this upper bound for the end of the series is finite by our use of Lemma \ref{lem:global_higher_int}, and the upper bound is also independent on $x_0$.
	
	For the beginning part $0 \leq i < i_0$, we use the following elementary inequality: if $\Phi_1, \Phi_2$ are positive-valued real functions on an interval $I \subset \R$ with $\Phi_1$ increasing and $\Phi_2$ decreasing, then
	\[
		1 \leq \max \left( \frac{\Phi_1(a)}{\Phi_1(b)}, \frac{\Phi_2(a)}{\Phi_2(b)} \right) \leq  \frac{\Phi_1(a)}{\Phi_1(b)} + \frac{\Phi_2(a)}{\Phi_2(b)}
	\]
	for all $a, b \in I$. We use this with $I = (0, \infty)$, $\Phi_1(t) = t^{n-1}$, $\Phi_2(t) = \log(1 + t^{-1})$, $a = M(\abs{DG})(x)$, and $b = r_i^{-1/2}$ for some $0 \leq i < i_0$. We obtain
	\[
		1 \leq r_i^\frac{n-1}{2} M^{n-1}(\abs{DG}) + \frac{\log \left( 1 + 1/M(\abs{DG}) \right)}{\log \left( 1 + \sqrt{r_i} \right)}.
	\]
	Moreover, we observe that $\log (1 + \sqrt{r_i}) > \log(\sqrt{r_i}) = \log(r_i)/2$, and that due to $0 \leq i < i_0$, we have $r_i > 2$, and consequently $\log(r_i) > 0$. Hence, we conclude that whenever $0 \leq i < i_0$, we have
	\[
		1 \leq r_i^\frac{n-1}{2} M^{n-1}(\abs{DG}) + \frac{2}{\log(r_i)} \log \left( 1 + \frac{1}{M(\abs{DG})} \right),
	\]
	and in particular,
	\begin{multline}\label{eq:beginning_part_of_chain_sum}
		\sum_{i=0}^{i_0 - 1} r_i^{-(n-1)} \int_{B_{i}} \abs{DG}
		\leq \sum_{i=0}^{i_0 - 1} r_i^{-\frac{n-1}{2}} \int_{B_{i}} \abs{DG} M^{n-1}(\abs{DG})\\
		+ 2 \sum_{i=0}^{i_0 - 1} \frac{1}{r_i^{n-1}\log(r_i)} \int_{B_{i}} \abs{DG} \log \left( 1 + \frac{1}{M(\abs{DG})} \right).
	\end{multline}

	We then utilize the fact that $i_0$ is the first index for which $r_i \leq 2$, from which it follows that $r_i > 2^{i_0 - i}$ when $0 \leq i < i_0$. Thus, we may estimate the first sum on the right hand side of \eqref{eq:beginning_part_of_chain_sum} by
	\[
		 \sum_{i=0}^{i_0 - 1} r_i^{-\frac{n-1}{2}} \int_{B_{i}} \abs{DG} M^{n-1}(\abs{DG})
		 \leq \left( \int_{\R^n} M^n(\abs{DG}) \right) \sum_{j=1}^\infty 2^{-\frac{n-1}{2}j},
	\]
	which is again a finite upper bound independent on $x_0$ due to the Hardy-Littlewood maximal inequality. For the other sum on the right hand side of \eqref{eq:beginning_part_of_chain_sum}, we use both the integral and sum versions of H\"older's inequality, the fact that no point of $\R^n$ is contatined in more than two balls $B_i$, and the above estimate $r_i > 2^{i_0 - i}$, in order to obtain
	\begin{align*}
		&\sum_{i=0}^{i_0 - 1} \frac{1}{r_i^{n-1}\log(r_i)} \int_{B_{i}} \abs{DG} \log \left( 1 + \frac{1}{M(\abs{DG})} \right)\\
		&\quad \lesssim_n \sum_{i=0}^{i_0 - 1} \frac{1}{\log(r_i)} \left( \int_{B_{i}} \abs{DG}^n \log^n \left( 1 + \frac{1}{M(\abs{DG})} \right) \right)^\frac{1}{n}\\
		&\quad \leq \left( \sum_{i=0}^{i_0 - 1} \frac{1}{\log^\frac{n}{n-1}(r_i)} \right)^\frac{n-1}{n} \left( \sum_{i=0}^{i_0 - 1} \int_{B_{i}} \abs{DG}^n \log^n \left( 1 + \frac{1}{M(\abs{DG})} \right) \right)^\frac{1}{n}\\
		&\quad\leq \left( \sum_{j=1}^{\infty} \frac{1}{(\log(2) j)^\frac{n}{n-1}} \right)^\frac{n-1}{n}  \left( 2\int_{\R^n} \abs{DG}^n \log^n \left( 1 + \frac{1}{M(\abs{DG})} \right) \right)^\frac{1}{n}.
	\end{align*}
	This upper bound is also independent of $x_0$, and is finite thanks to Lemma \ref{lem:pseudo_lower_integrability}. Thus, combining our estimates, we have an $x_0$-independent upper bound for the upper end $i \geq 0$ of the telescopic sum of integral averages. An identical argument proves a similar bound for the lower end $i < 0$, completing the proof.
\end{proof}

\subsection{Completing the proofs}

It remains to complete the proofs of Theorems \ref{thm:Picard_for_QR_values} and \ref{thm:Picard_for_QR_values_spherical}. We start with Theorem \ref{thm:Picard_for_QR_values_spherical}, where we recall the statement for the convenience of the reader.

\mainthmsphere*

\begin{proof}
	Suppose that $h \in W^{1,n}_\loc(\R^n, \S^n)$ has a $(K, \Sigma)$-quasiregular value with respect to the spherical metric at $q$ distinct points $w_1, \dots, w_q \in \partial f(\R^n)$, with $q \geq 2$. By Theorem \ref{thm:Bonk-PoggiCorradini-part}, we must have either $q \leq q_0(n, K)$, or $\abs{Dh} \in L^n(\R^n)$. Suppose then that we are in the latter case, with the aim of deriving a contradiction. From this point onwards, we may ignore all of the spherical quasiregular values $w_i$ except the first two, $w_1$ and $w_2$.
	
	By the single-point Reshetnyak's theorem given in Theorem \ref{thm:Picard_for_QR_values_spherical} \ref{item:Reshetnyak_sphere}, we have $w_1, w_2 \notin h(\R^n)$. By post-composing $h$ with an isometric spherical rotation, we may assume that $w_2 = s_n(\infty)$. In this case, we have an unbounded continuous map $\tilde{f} \in W^{1,n}_\loc(\R^n, \R^n)$ such that $h = s_n \circ \tilde{f}$. We let $y_1 \in \R^n$ be the point for which $s_n(y_1) = w_1$. It follows that $\tilde{f}$ is unbounded, that $y_1 \in \partial \tilde{f}(\R^n)$, and that $\tilde{f}$ has a $(K, C(n)\Sigma)$-quasiregular value at $y_1$ by Lemma \ref{lem:QRvalue_sphere}. The fact that $\abs{Dh} \in L^n(\R^n)$ also yields that
	\[
	\smallabs{A_{\tilde{f}}} (\R^n) = \int_{\R^n} \abs{J_h} \leq \int_{\R^n} \abs{Dh}^n < \infty.
	\]
	
	We then consider the map $f = \tilde{f} - y_1$. It follows that $f$ is a continuous, unbounded map in $W^{1,n}_\loc(\R^n, \R^n)$, that $0 \notin f(\R^n)$, and that $f$ has a $(K, C(n)\Sigma)$-quasiregular value at $0$. Moreover, since $J_f = J_{\tilde{f}}$ and $1+\smallabs{f}^2 \gtrsim_{n, y_1} 1 + \smallabs{f-y_1}^2$, we obtain
	\[
	A_f(\R^n) = \int_{\R^n} \frac{2^n J_f}{(1 + \smallabs{f}^2)^n} \lesssim_{n, y_1} \int_{\R^n} \frac{2^n \smallabs{J_{f}}}{(1 + \smallabs{f - y_1}^2)^n} = \smallabs{A_{\tilde{f}}} (\R^n) < \infty.
	\]
	Thus, we may apply Lemma \ref{lem:two_cases_lemma} on $f$, and conclude that $\abs{f}^{-1} \abs{Df} \in L^n(\R^n)$.
	
	Let then $G$ be as in \eqref{eq:G_def}. Since $f$ omits 0, it follows that $G$ is a well-defined continuous map, $G \in W^{1,n}_\loc(\R^n, \R \times \S^{n-1})$, and $\abs{DG} = \abs{f}^{-1} \abs{Df} \in L^n(\R^n)$. Since also $\Sigma \in L^{1+\eps}(\R^n) \cap L^{1-\eps}(\R^n)$, it follows by Lemma \ref{lem:boundedness_lemma} that $G_\R = \log \abs{f}$ is bounded. This is a contradiction, since $f$ is unbounded. The proof is hence complete.
\end{proof}

Theorem \ref{thm:Picard_for_QR_values} is then an immediate corollary of Theorem \ref{thm:Picard_for_QR_values_spherical}. We recall the statement and give the short proof.

\mainthm*

\begin{proof}
	Suppose that $f \in W^{1,n}_\loc(\R^n, \R^n)$ is continuous and has a $(K, \Sigma)$-quasiregular value at $q$ distinct points $y_1, \dots, y_q \in \partial f(\R^n)$. Let $h = s_n \circ f$. Then by Lemma \ref{lem:QRvalue_sphere}, $h$ has a $(K, \tilde{\Sigma})$-quasiregular value with respect to the spherical metric at each of the points $s_n(y_1), \dots, s_n(y_q) \in \partial h(\R^n)$, where $\tilde{\Sigma} = C(n, y_1, \dots, y_q) \Sigma$. Now, Theorem \ref{thm:Picard_for_QR_values_spherical} yields an upper bound on $q$ dependent only on $n$ and $K$, completing the proof.
\end{proof}

\begin{rem}\label{rem:deriving_Rickman's_Picard}
	With Theorems \ref{thm:Picard_for_QR_values} and \ref{thm:Picard_for_QR_values_spherical} shown, we conclude this section by briefly pointing out how the standard Rickman's Picard Theorem follows almost immediately from the case $\Sigma \equiv 0$ of our main results. Besides Theorem \ref{thm:Picard_for_QR_values}, the only other result of quasiregular theory used in the argument is either the Liouville theorem or Reshetnyak's Theorem; the single-value versions from Theorem \ref{thm:single_value_results} can also be used for this.
	
	Both arguments begin in the same manner. Suppose towards contradiction that $f \colon \R^n \to \R^n$ is an entire non-constant $K$-quasiregular map that omits $q+1$ distinct points $y_1, \dots y_{q+1} \notin f(\R^n)$, where $q = q(n, K)$ is as in Theorem \ref{thm:Picard_for_QR_values}. We note that $f$ has a $(K, 0)$-quasiregular value at every $y \in \R^n$. Hence, by Theorem \ref{thm:Picard_for_QR_values}, we obtain that $\partial f(\R^n)$ contains at most $q$ points. Since $f$ omits more than $q$ different points, the set $\R^n \setminus \overline{f(\R^n)}$ must be non-empty. 
	
	For the argument based on Reshetnyak's theorem, we use it to conclude that the set $\intr(f(\R^n))$ is also non-empty. It follows that $\partial f(\R^n)$ separates two non-empty subsets of $\R^n$, in which case the set $\partial f(\R^n)$ has topological dimension at least $(n-1)$; see e.g.\ \cite[Theorem~IV~4]{Hurewics-Wallman_book}. This is impossible, since $\partial f(\R^n)$ has topological dimension 0 due to it containing at most $q$ points, completing the proof.
	
	For the argument based on the Liouville theorem, we instead use the non-emptiness of $\R^n \setminus \overline{f(\R^n)}$ to select a point $y_0 \in \R^n \setminus \overline{f(\R^n)}$, and post-compose $f$ with a M\"obius transformation which takes $y_0$ to $\infty$. Now, since $f$ is a quasiregular map that omits a neighborhood of $y_0$, the rotated map $\tilde{f}$ is a quasiregular map that omits a neighborhood of $\infty$, and thus $\tilde{f}$ is bounded. Hence, the Liouville theorem implies that $\tilde{f}$ is constant, resulting in a contradiction and completing the proof.
\end{rem}

\section{The planar case}\label{sect:planar}

In this section, we prove Theorem \ref{thm:q_is_2_when_n_is_2}. The result is in fact derived directly from Theorem \ref{thm:Picard_for_QR_values} with the use of a trick.

Before beginning the proof, we recall the following corollary of the single-value Reshetnyak's theorem from \cite{Kangasniemi-Onninen_1ptReshetnyak}, which generalizes the version of the argument principle used by Astala and P\"aiv\"arinta \cite[Proposition 3.3~b)]{Astala-Paivarinta}. 

\begin{lemma}[{\cite[Corollary 1.6]{Kangasniemi-Onninen_1ptReshetnyak}}]\label{lem:local_index_counting}
	Let $f_1, f_2 \in W^{1,n}_\loc(\R^n, \R^n)$ be such that both $f_i$ have a $(K_i, \Sigma_i)$-quasiregular value at $y_0 \in \R^n$, with $K_i \geq 1$ and $\Sigma_i \in L^{1+\eps}_\loc(\R^n)$ for some $\eps > 0$. Suppose that
	\[
		\liminf_{x \to \infty} \abs{f_2(x) - y_0} \neq 0 \quad \text{and} \quad \liminf_{x \to \infty} \abs{f_1(x) - f_2(x)} = 0.
	\] 
	Then
	\[
		\sum_{x \in f_1^{-1}\{y_0\}} i(x, f_1) = \sum_{x \in f_2^{-1}\{y_0\}} i(x, f_2).
	\]
\end{lemma}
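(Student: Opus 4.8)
Translating, we may take $y_0 = 0$. The constant cases are immediate: if $f_2 \equiv 0$ then $\liminf_{x \to \infty} \smallabs{f_2} = 0$, contrary to hypothesis, and if $f_1 \equiv 0$ then $\liminf_{x\to\infty}\smallabs{f_1 - f_2} = \liminf_{x\to\infty}\smallabs{f_2} \neq 0$, again a contradiction; so both $f_1$ and $f_2$ are non-constant maps with a quasiregular value at $0$. The single-value Reshetnyak theorem (Theorem \ref{thm:single_value_results} \ref{item:Reshetnyak}) then applies to each $f_i$ at the value $0$: the sets $f_i^{-1}\{0\}$ are discrete, each local index $i(x, f_i)$ with $x \in f_i^{-1}\{0\}$ is a positive integer, and $f_i$ is locally open at these points. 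In particular $f_i^{-1}\{0\}$ is finite inside every bounded set, so for any bounded domain $\Omega$ with $0 \notin f_i(\partial \Omega)$ the degree-counting formula gives $\deg(f_i, 0, \Omega) = \sum_{x \in f_i^{-1}\{0\} \cap \Omega} i(x, f_i) \geq 0$, and this quantity is non-decreasing as $\Omega$ grows through balls centred at the origin, because all the contributing local indices are positive.

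\textbf{Pinning the right-hand side and reformulating.} Since $\smallabs{f_2} \geq 0$, the assumption $\liminf_{x\to\infty}\smallabs{f_2} \neq 0$ means $\liminf_{x\to\infty}\smallabs{f_2} > 0$, so there are $R_0 > 0$ and $\delta_0 > 0$ with $\smallabs{f_2(x)} \geq \delta_0$ for $\smallabs{x} \geq R_0$. Hence $f_2^{-1}\{0\} \subset \B^n(0, R_0)$ is finite, and for every $r \geq R_0$ we have $0 \notin f_2(\partial\B^n(0, r))$ and $\sum_{x \in f_2^{-1}\{0\}} i(x, f_2) = \deg(f_2, 0, \B^n(0, r)) =: d$, independent of such $r$. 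On the other side, write $\Phi(r) = \deg(f_1, 0, \B^n(0, r))$ for those $r > 0$ — all but at most countably many, as $f_1^{-1}\{0\}$ is discrete — with $0 \notin f_1(\partial\B^n(0,r))$; the counting formula and positivity of the indices give that $\Phi$ is non-decreasing with $\lim_{r\to\infty}\Phi(r) = \sum_{x\in f_1^{-1}\{0\}} i(x, f_1) \in \Z_{\geq 0} \cup \{\infty\}$. Thus the whole lemma reduces to showing that $\Phi(r) = d$ for a sequence of radii $r \to \infty$, which simultaneously forces the left-hand sum to be finite and equal to $d$.

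\textbf{The comparison step and the main obstacle.} The mechanism for $\Phi(r) = d$ is a Rouch\'e-type homotopy: if $r \geq R_0$, $0 \notin f_1(\partial\B^n(0,r))$, and $\sup_{\smallabs{x} = r} \smallabs{f_1(x) - f_2(x)} < \delta_0$, then for all $t \in [0,1]$ and $\smallabs{x} = r$ one has $\smallabs{(1-t)f_2(x) + tf_1(x)} \geq \smallabs{f_2(x)} - \smallabs{f_1(x) - f_2(x)} \geq \delta_0 - \sup_{\smallabs{x}=r}\smallabs{f_1-f_2} > 0$, so the linear homotopy from $f_2$ to $f_1$ misses $0$ on $\partial\B^n(0,r)$ and $\Phi(r) = \deg(f_1, 0, \B^n(0,r)) = \deg(f_2, 0, \B^n(0,r)) = d$ by homotopy invariance of the degree. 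The task therefore becomes to produce arbitrarily large radii $r$ with $\sup_{\smallabs{x}=r}\smallabs{f_1 - f_2} < \delta_0$, and this is precisely the crux: the hypothesis $\liminf_{x\to\infty}\smallabs{f_1 - f_2} = 0$ by itself only forces $\smallabs{f_1 - f_2}$ to be small at individual points near infinity, not on whole round spheres. To bridge this gap I would not use the raw metric hypothesis but the structure of maps with a quasiregular value: the open set $\{\smallabs{f_1 - f_2} < \delta_0\}$ meets every neighbourhood of $\infty$ (its closed complement cannot contain such a neighbourhood, else $\liminf \smallabs{f_1-f_2} \geq \delta_0 > 0$), and one combines this with the discreteness of $f_1^{-1}\{0\}$ and the local openness from Reshetnyak's theorem to carry out the degree computation along an exhaustion by bounded domains whose boundaries avoid $f_1^{-1}\{0\}$ and along which $f_2$ can be joined to $f_1$ without meeting $0$ — in effect routing the homotopy through the unbounded part of $\{\smallabs{f_1 - f_2} < \delta_0\}$ rather than insisting on a clean Rouch\'e inequality on spheres. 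Making this routing precise is the main difficulty, and is the content of \cite[Corollary 1.6]{Kangasniemi-Onninen_1ptReshetnyak}; when $\smallabs{f_1 - f_2}$ actually tends to $0$ at infinity — as it does in our applications, where $f_1$ and $f_2$ differ by a factor tending to $1$ — it degenerates to the elementary Rouch\'e argument above.
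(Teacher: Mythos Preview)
The paper does not prove this lemma; it is quoted from \cite[Corollary 1.6]{Kangasniemi-Onninen_1ptReshetnyak} without argument, so there is no in-paper proof to compare against.

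Your reductions and the Rouch\'e set-up are correct, and you have correctly isolated the crux: $\liminf_{x\to\infty}\smallabs{f_1-f_2}=0$ gives smallness only along a sequence, not on entire spheres, so the straight-line homotopy on $\partial\B^n(0,r)$ need not avoid $0$. However, your resolution of this point is not a proof. The phrase ``routing the homotopy through the unbounded part of $\{\smallabs{f_1-f_2}<\delta_0\}$'' does not explain why this set has an unbounded component, why one can exhaust $\R^n$ by bounded domains whose boundaries lie entirely inside it, or how the quasiregular-value hypothesis on the $f_i$ is actually used beyond discreteness of the zero sets---and that hypothesis must enter, since for arbitrary continuous $f_1,f_2$ the conclusion fails. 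You then defer to the very citation you are meant to be proving, which is circular.

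Your closing remark is the salvageable part: in the paper's sole use of this lemma (Section~\ref{sect:planar}, with $f_1=\gamma+\theta$ and $f_2=\gamma$), one has $\lim_{z\to\infty}\theta(z)=0$, hence $\sup_{\smallabs{z}=r}\smallabs{f_1-f_2}\to 0$, and the elementary Rouch\'e argument you wrote out goes through verbatim. For the purposes of this paper, that special case is all that is required.
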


We also recall a version of the main structure theorem for planar maps with a quasiregular value. A proof for the result can essentially be found embedded within \cite[Proof of Theorem 8.5.1]{Astala-Iwaniec-Martin_Book}. We regardless go over the key ideas of the argument.

\begin{lemma}\label{lem:planar_QRval_structure}
	Suppose that $f \colon \C \to \C$ has a $(K, \Sigma)$-quasiregular value at $z_0 \in \C$, where $K \geq 1$ and $\Sigma \in L^{1+\eps}(\C) \cap L^{1-\eps}(\C)$ for some $\eps > 0$. Then $f$ is of the form
	\[
	f(z) = z_0 + g(z)e^{\theta(z)},
	\]
	where $g \colon \C \to \C$ is an entire quasiregular map, and $\theta \in C(\C, \C)$ with $\lim_{z \to \infty} \theta(z) = 0$.
\end{lemma}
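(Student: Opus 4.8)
The plan is to reduce the statement to the Liouville-type result for Euclidean quasiregular values, Theorem \ref{thm:single_value_results}(i), in a way that mirrors the classical planar argument recounted in \cite[Proof of Theorem 8.5.1]{Astala-Iwaniec-Martin_Book}, but formulated entirely in terms of the machinery already developed in this paper. Throughout we may assume $z_0 = 0$, since replacing $f$ by $f - z_0$ does not change the hypotheses. The key observation is that the $(K,\Sigma)$-quasiregular value condition at $0$ is a Beltrami-type equation with a zeroth-order term, so that the classical exponential trick applies: one wants to write $f = g\, e^{\theta}$ where $\theta$ solves the associated inhomogeneous Beltrami equation $\theta_{\overline z} = \mu \theta_z + A$ for an appropriate $\mu$ and $A$, which decouples the equation so that $g$ becomes genuinely quasiregular.

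\textbf{Main steps.} First I would record that $f$ satisfies \eqref{eq:QRval_zero_Beltrami}, or rather that it satisfies the distortion inequality which, pointwise where $Df$ is invertible, yields a Beltrami coefficient $\mu$ with $\|\mu\|_{L^\infty} \le (K-1)/(K+1) < 1$ together with a coefficient $A$ arising from the $\Sigma$-term; the delicate point is that $A$ should lie in $L^{2+\eps'}_{\loc}$ for the linear Beltrami theory to apply, which is what the assumption $\Sigma \in L^{1+\eps}(\C)$ provides (via $|f|^2\Sigma$ being locally integrable to a slightly better exponent on bounded sets). Second, I would invoke the existence theory for the inhomogeneous Beltrami equation to produce $\theta \colon \C \to \C$ with $\theta_{\overline z} = \mu \theta_z + A$, continuous, and normalized so that $\theta(z) \to 0$ as $z \to \infty$; here the global integrability $\Sigma \in L^{1+\eps}(\C) \cap L^{1-\eps}(\C)$ is exactly what forces the decay at infinity of the Cauchy/Beltrami transform of $A$, so that $\theta$ is a genuine perturbation vanishing at $\infty$ rather than growing. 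Third, I would set $g = f e^{-\theta}$ and verify by a direct computation with the chain rule for Sobolev maps that $g \in W^{1,n}_{\loc}(\C,\C)$ and that the zeroth-order terms cancel, so $g_{\overline z} = \mu g_z$ a.e., i.e.\ $g$ is $K$-quasiregular. Finally, since $e^{\theta}$ never vanishes, $f(z) = g(z) e^{\theta(z)}$ is the claimed factorization, and $g$ is entire quasiregular; alternatively, one phrases the last step using Theorem \ref{thm:single_value_results} directly, since $f e^{-\theta}$ has a $(K,0)$-quasiregular value at $0$ as the $\Sigma$-term is absorbed into $\theta$.

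\textbf{Where the difficulty lies.} The main obstacle is the integrability bookkeeping needed to make the inhomogeneous Beltrami solution $\theta$ both continuous and decaying at infinity with the precise hypothesis $\Sigma \in L^{1+\eps}(\C) \cap L^{1-\eps}(\C)$: the coefficient $A$ in $\theta_{\overline z} = \mu\theta_z + A$ is roughly of size $|f||\nabla f|^{-1}\Sigma^{1/2}$ near the zero set and one must control it on the relevant scale, using that $f$ has a quasiregular value at $0$ (hence, by Theorem \ref{thm:single_value_results}(ii), $f^{-1}\{0\}$ is discrete and $f$ is locally well-behaved there) to ensure $A \in L^{2+\eps'}_{\loc}(\C)$ and $A \in L^{2-\eps'}(\C)$ globally. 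The $L^{2+\eps'}_{\loc}$ part yields local Hölder continuity of the Beltrami transform of $A$, hence of $\theta$, while the slightly-below-critical global integrability $L^{2-\eps'}(\C)$ is what makes the Cauchy transform of $A$ tend to $0$ at $\infty$; combining the two to produce a single continuous $\theta$ with $\theta(z) \to 0$ is the crux of the argument. Once $\theta$ is in hand, the cancellation of the first-order term and the conclusion that $g$ is quasiregular is routine, and the factorization $f = z_0 + g e^{\theta}$ follows immediately.
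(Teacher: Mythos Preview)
Your overall strategy matches the paper's: reduce to $z_0 = 0$, extract from the $(K,\Sigma)$-quasiregular-value inequality a Beltrami-type equation $f_{\overline z} = \mu f_z + A f$, solve the inhomogeneous equation $\theta_{\overline z} = \mu \theta_z + A$ for a continuous $\theta$ vanishing at infinity, and set $g = f e^{-\theta}$.

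However, your description of the coefficient $A$ contains a genuine confusion that would derail an actual proof. You write that $A$ is ``roughly of size $|f|\,|\nabla f|^{-1}\Sigma^{1/2}$ near the zero set'' and that one must invoke the single-value Reshetnyak theorem to control it. This is not how $A$ arises. The paper's derivation is much cleaner: starting from $|Df|^2 \le K J_f + |f|^2 \Sigma$ and using $|Df|^2 = (|f_z|+|f_{\overline z}|)^2$, $J_f = |f_z|^2 - |f_{\overline z}|^2$, one gets the pointwise inequality
\[
	|f_{\overline z}| \le k\,|f_z| + \sigma\,|f|, \qquad k = \sqrt{\tfrac{K-1}{K+1}}, \quad \sigma = \sqrt{\tfrac{\Sigma}{K+1}},
\]
which one then rewrites as an \emph{equation} $f_{\overline z} = \mu f_z + A f$ with measurable $\mu, A$ satisfying $|\mu| \le k$ and $|A| \le \sigma$ pointwise. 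The crucial point is that $A$ is bounded by $\sqrt{\Sigma}$ \emph{independently of $f$}, so $\Sigma \in L^{1+\eps}(\C) \cap L^{1-\eps}(\C)$ gives $A \in L^{2+2\eps}(\C) \cap L^{2-2\eps}(\C)$ immediately. There is no need to analyse $f^{-1}\{0\}$, no factor of $|f|$ or $|\nabla f|^{-1}$ appears in $A$, and Reshetnyak's theorem plays no role in this lemma. Once $A$ has this integrability, the solution $\theta = \cC(I - \mu\cS)^{-1}A$ via the Cauchy and Beurling transforms is continuous with $\theta(\infty)=0$ by standard results, and the verification that $g = fe^{-\theta}$ satisfies $g_{\overline z} = \mu g_z$ is a direct computation.
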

\begin{proof}
	We first rewrite \eqref{eq:QRvalue_def} in the form of a Beltrami equation. Indeed, recalling that $\abs{Df} = \abs{f_z} + \abs{f_{\overline{z}}}$ and $J_f = \abs{f_z}^2 - \abs{f_{\overline{z}}}^2$, we have
	\[
	\abs{f_z}^2 + \abs{f_{\overline{z}}}^2 \leq \abs{Df}^2 \leq K(\abs{f_z}^2 - \abs{f_{\overline{z}}}^2) + \abs{f - z_0}^2 \Sigma.
	\]
	Rearranging, we have
	\[
	\abs{f_{\overline{z}}}^2 \leq \frac{K-1}{K+1} \abs{f_z}^2 + \abs{f - z_0}^2 \frac{\Sigma}{K + 1}.
	\]
	Due to the elementary inequality $\sqrt{a^2 + b^2} \leq \abs{a} + \abs{b}$, we hence have
	\begin{equation}\label{eq:QRval_beltrami_ineq}
		\abs{f_{\overline{z}}} \leq k \abs{f_z} + \sigma \abs{f - z_0}, 
	\end{equation}
	where 
	\[
	k = \sqrt{\frac{K - 1}{K + 1}} \in [0, 1) \quad \text{and} \quad \sigma = \sqrt{\frac{\Sigma}{K + 1}} \in L^{2 + 2\eps}(\C) \cap L^{2 - 2\eps}(\C).
	\]
	Moreover, \eqref{eq:QRval_beltrami_ineq} can be rewritten as a Beltrami-type equation
	\begin{equation}\label{eq:QRval_beltrami_eq}
		f_{\overline{z}} = \mu f_z + A (f - z_0),
	\end{equation}
	where $\norm{\mu}_{L^\infty} \leq k < 1$ and $A \in L^{2 + 2\eps}(\C, \C) \cap L^{2 - 2\eps}(\C, \C)$. 
	
	To prove the structure theorem, one first studies the auxiliary equation
	\begin{equation}\label{eq:QRval_aux_beltrami_eq}
		\theta_{\overline{z}} = \mu \theta_z + A.
	\end{equation}
	By standard existence theory of Beltrami-type equations, one can find a solution for \eqref{eq:QRval_aux_beltrami_eq} by $\theta = \cC (I - \mu \cS)^{-1} A$, where $\cC$ is the Cauchy transform and $\cS$ is the Beurling transform. In particular, since $A \in L^{2+2\eps}(\C, \C) \cap L^{2-2\eps}(\C, \C)$, the map $\theta$ ends up being a bounded, continuous map with $\lim_{z \to \infty} \theta = 0$: see e.g.\ \cite[Theorem 4.3.11 and Section 5.4]{Astala-Iwaniec-Martin_Book}.
	
	Then, with the solution $\theta$ of \eqref{eq:QRval_aux_beltrami_eq}, one defines $g = (f - z_0) e^{-\theta}$, in which case $f = z_0 + g e^{\theta}$. By using \eqref{eq:QRval_beltrami_eq} and \eqref{eq:QRval_aux_beltrami_eq}, one computes directly that $g_{\overline{z}} = \mu g_z$. Hence, $g$ is an entire quasiregular map, completing the argument.
\end{proof}

We then prove Theorem \ref{thm:q_is_2_when_n_is_2}. We again first recall the statement for the convenience of the reader.

\mainthmplanar*

\begin{proof}
	We first reduce the case $h \colon \C \to \S^2$ to the case $f \colon \C \to \C$. Suppose that $h \in W^{1,2}_\loc(\C, \S^2)$ has a $(K, \Sigma)$-quasiregular value with respect to the spherical metric at three distinct points $w_1, w_2, w_3 \in \partial h(\C)$. By post-composing $h$ with an isometric rotation, we may assume that $w_3 = s_2(\infty)$. The single-point Rehsetnyak's theorem given in Proposition \ref{prop:Liouville_and_Reshetnyak_spherical} \ref{item:Reshetnyak_sphere} then again yields that $s_2(\infty) \notin h(\R^n)$; indeed, otherwise $h(\R^n)$ would be a neighborhood of $s_2(\infty)$ by the openness part, contradicting $s_2(\infty) = w_3 \in \partial h(\C)$. Thus, if we define $f \colon \C \to \C$ by $s_2 \circ f = h$, then by Lemma \ref{lem:QRvalue_sphere}, $f$ has a $(K, C(h)\Sigma)$-quasiregular value at two distinct points $z_1, z_2 \in \partial f(\C)$, where $s_2(z_1) = w_1$ and $s_2(z_2) = w_2$.
	
	Suppose then towards contradiction that $f \in W^{1,2}_\loc(\C, \C)$ has a $(K, \Sigma)$-quasiregular value at two distinct points $z_1, z_2 \in \partial f(\C)$. For convenience, we may assume $z_1 = 0$ and $z_2 = 1$ by replacing $f$ with the map $(f - z_1)/(z_2 - z_1)$, an operation which only introduces a multiplicative constant $C(z_1, z_2)$ to $\Sigma$. As before, by the single-point Reshetnyak's theorem, we also have that $0, 1 \notin f(\C)$.
	
	Since $\Sigma \in L^{1+\eps}(\C) \cap L^{1-\eps}(\C)$, we may use Lemma \ref{lem:planar_QRval_structure} to write $f(z) = g(z) e^{\theta(z)}$, where $g \colon \C \to \C$ is an entire quasiregular map and $\theta \in C(\C, \C)$ with $\lim_{z \to \infty} \theta(z) = 0$. Since $f(z) \neq 0$ and $e^{\theta(z)} \neq 0$ for all $z \in \C$, we conclude that $g$ omits $0$. Hence, we may lift $g$ in the exponential map to find an entire quasiregular map $\gamma \colon \C \to \C$ such that $g = e^{\gamma}$. In particular,
	\[
	f(z) = e^{\gamma(z) + \theta(z)}.
	\]
	
	We first observe that $\gamma$ is non-constant. Indeed, suppose towards contradiction that $\gamma \equiv c$. Then we have $\lim_{z \to \infty} f(z) = e^{c}$. However, this is impossible, since it follows from $\lim_{z \to \infty} \theta(z) = 0$ that $(\partial f(\C)) \setminus f(\C) \subset \{e^{c}\}$, yet $(\partial f(\C)) \setminus f(\C)$ must at least contain the two distinct points $0$ and $1$. Hence, we conclude that $\gamma$ is non-constant; in particular, by the Picard theorem for entire quasiregular maps, $\gamma$ omits at most a single point in $\C$.
	
	\enlargethispage{2\baselineskip}
	Next, we claim that $\gamma + \theta$ has a $(K, 4\Sigma)$-quasiregular value at each of the points $2\pi ik, k \in \Z$. Indeed, we have 
	\begin{multline*}
		\abs{D(\gamma + \theta)}^2 = \frac{\abs{Df}^2}{\abs{f}^2} \leq K \frac{J_f}{\abs{f}^2} + \frac{\min(\abs{f}^2, \abs{f-1}^2)}{\abs{f}^2} \Sigma\\ 
		= K J_{\gamma + \theta} + \min\bigl(1, \big\lvert1 - e^{-\gamma -\theta}\big\rvert^2\bigr) \Sigma.
	\end{multline*}
	Now, fix $k \in \Z$, and suppose first that $\abs{\gamma(z) + \theta(z) - 2\pi ik} \leq 2^{-1}$. Then
	\begin{multline*}
		\big\lvert 1 - e^{-\gamma(z) -\theta(z)} \big\rvert = \big\lvert e^{2\pi ik -\gamma(z) -\theta(z)} - 1 \big\rvert\\ 
		\leq \abs{2\pi ik - \gamma(z) - \theta(z)} \left( \sum_{j=1}^\infty \frac{\abs{k2\pi i - \gamma(z) - \theta(z)}^{j-1}}{j!} \right)\\
		\leq \abs{2\pi ik - \gamma(z) - \theta(z)} \left( \sum_{j=1}^\infty \frac{1}{2^{j-1}j!} \right) \leq 2 \abs{2\pi ik - \gamma(z) - \theta(z)}.
	\end{multline*}
	If on the other hand we have $\abs{\gamma(z) + \theta(z) - 2\pi ik} \geq 2^{-1}$, then it follows that $1 \leq 4 \abs{\gamma(z) + \theta(z) - 2\pi ik}^2$. It follows that $\min(1, \lvert1 - e^{-\gamma -\theta}\rvert^2) \leq 4 \abs{\gamma + \theta - 2\pi ik}^2$, and in particular,
	\[
	\abs{D(\gamma + \theta)}^2 \leq K J_{\gamma + \theta} + \abs{(\gamma + \theta) - 2\pi ik}^2 4\Sigma.
	\]
	
	Now, since $\gamma + \theta$ has a $(K, 4\Sigma)$-quasiregular value at $2\pi ik$ for every $k \in \Z$, and since $\Sigma \in L^{1+\eps}(\C) \cap L^{1-\eps}(\C)$, Theorem \ref{thm:Picard_for_QR_values} provides a $q = q(n, K)$ such that $2\pi ik \in \partial [(\gamma + \theta)(\C)]$ for at most $q$ different values of $k$. Since $\gamma$ also only omits at most one point of $\C$, we can select a $k_0 \in \Z$ such that $2\pi i k_0 \in \gamma(\C)$ and $2\pi i k_0 \notin \partial [(\gamma + \theta)(\C)]$. Since $1 \notin f(\C)$, we also have $2\pi i k_0 \notin (\gamma + \theta)(\C)$, and hence there exists a radius $r_0 > 0$ such that $(\gamma + \theta)(\C) \cap \B^2(k_0 2 \pi i, r_0) = \emptyset$.
	
	Now, for the final step of the argument, we apply Lemma \ref{lem:local_index_counting}. Indeed, we have 
	\[
	\liminf_{z \to \infty} \abs{(\gamma + \theta)(z) - 2 \pi i k_0} \geq r_0 > 0
	\quad \text{and} \quad
	\lim_{z \to \infty} \abs{(\gamma + \theta)(z) - \gamma(z)} = 0.
	\] 
	Moreover, $\gamma+\theta$ has a $(K, 4\Sigma)$-quasiregular value at $2 \pi i k_0$, and $\gamma$ is a non-constant quasiregular map. Hence, we conclude that
	\[
	0 = \sum_{z \in (\gamma+\theta)^{-1}\{2\pi ik_0\}} i(z, \gamma+\theta)
	= \sum_{z \in \gamma^{-1}\{2\pi ik_0\}} i(z, \gamma) > 0,
	\]
	which is a contradiction. The proof is thus complete.
\end{proof}

\section{Counterexamples}\label{sect:counterexamples}

In this chapter, we investigate the sharpness of the assumptions of Theorem \ref{thm:Picard_for_QR_values}. In particular, we show that the assumption we made in Theorem \ref{thm:Bonk-PoggiCorradini-part} that $\Sigma \in L^1(\R^n) \cup L^{1+\eps}_\loc(\R^n)$ is not sufficient to obtain the conclusions of Theorem \ref{thm:Picard_for_QR_values}.

\begin{ex}\label{ex:plus_epsilon_counterexample}
	In our first example, we construct for every $q \in \Z_{> 0}$ a continuous map $f \in W^{1,\infty}_\loc(\R^n, \R^n)$ such that $f$ has $q$ distinct $(1, \Sigma)$-quasiregular values, where $\Sigma \in L^1(\R^n) \cap L^{1-\eps}(\R^n) \cap L^\infty_\loc(\R^n)$ for every $\eps \in (0, 1)$. See Figure \ref{fig:sea_urchin_counterexample} for a rough illustration of the example in the case $n = 2$.
	
	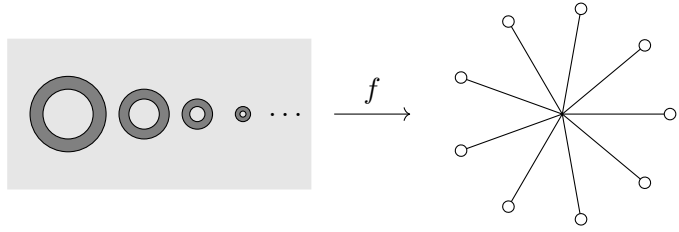
\begin{figure}[h]
		\begin{tikzpicture}
			\fill[fill=gray!20] (-0.8, 1) -- (-0.8, -1) -- (3.2, -1) -- (3.2, 1) -- (-1, 1);
			\filldraw[color=black, fill=gray](0,0) circle (0.5);
			\filldraw[color=black, fill=gray!20](0,0) circle (0.33);
			\filldraw[color=black, fill=gray](1,0) circle (0.33);
			\filldraw[color=black, fill=gray!20](1,0) circle (0.2);
			\filldraw[color=black, fill=gray](1.7,0) circle (0.2);
			\filldraw[color=black, fill=gray!20](1.7,0) circle (0.1);
			\filldraw[color=black, fill=gray](2.3,0) circle (0.1);
			\filldraw[color=black, fill=gray!20](2.3,0) circle (0.04);
			\draw (2.5,0) node[anchor=west] {$\ldots$};
			
			\draw[->] (3.5,0) -- (4, 0) node[anchor=south] {$f$}
			-- (4.5,0);
			
			\foreach \i in {1,...,9} {
				\draw[-o] (6.5,0) -- ({6.5+1.5*cos(\i*40)}, {1.5*sin(\i*40)});
			}
		\end{tikzpicture}
		\caption{\small Rough illustration of the map $f$ of Example \ref{ex:plus_epsilon_counterexample} in the case $n = 2$. The map $f$ takes each of the infinitely many shaded annuli on the domain side to one of the open-ended stalks on the target side, stopping partway through.  In the lighter shaded part of $\R^2$ the map $f$ is locally constant, with the unbounded component mapped to the center of the stalks. The tips of the stalks are quasiregular values of $f$ and are contained in $\partial f(\R^2)$.}
		\label{fig:sea_urchin_counterexample}
	\end{figure}
	
	We begin by selecting $q$ distinct points points $\{y_1, \dots, y_{q}\} \in \S^{n-1} \subset \R^n$. We let $d_0 > 0$ be the minimum distance from a point $y_k$ to a line $\{t y_l, t \in \R\}$, where $k \neq l$.
	
	We then consider the function $\theta \colon (0, 2^{-1}) \to [0, \infty)$ given by
	\[
	\theta(r) = \log^{\frac{n - 1 - \delta}{n}} \frac{1}{r},
	\]
	where $\delta \in (0, n-1)$. Note that $\theta$ is decreasing. We also define a function $\Theta \colon \B^n(0, 2^{-1}) \setminus \{0\} \to [0, \infty)$ by
	\[
	\Theta(x) = \theta(\abs{x}).
	\]
	Then we have
	\[
	\int_{\B^n(0, 2^{-1})} \abs{\nabla \Theta}^n \lesssim_{n, \delta} \int_{\B^n(0, 2^{-1})} \frac{1}{\abs{x}^n \log^{1 + \delta} \abs{x}^{-1}} < \infty.
	\]
	Thus, $\nabla \Theta \in L^n(\B^n(0, 2^{-1}))$, and consequently by H\"older's inequality, $\nabla \Theta \in L^{(1-\eps)n}(\B^n(0, 2^{-1}))$ for every $\eps \in (0, 1)$. However, we regardless have 
	\[
	\lim_{x \to 0} \Theta(x) = \lim_{r \to 0} \theta(r) = \infty.
	\]
	Thus, we may select radii $2^{-1} = R_1 > R_2 > \dots$ such that we have $\theta(R_{i+1}) - \theta(R_{i}) = i$ for all $i \in \Z_{\geq 0}$.
	
	We then pick a discrete set of points $\{x_i : i \in \Z_{> 0}\} \subset \R^n$ such that the closures of the balls $B_i = \B^n(x_i, R_{i})$ are pairwise disjoint. We also denote $B'_i = \B^n(x_i, R_{i+1})$, and $k_i = ({i \mod q}) \in \{1, \dots, q\}$. We then define a function $f \colon \R^n \to \R^n$ as follows: in $\R^n \setminus \bigcup_{i} B_i$ we have $f \equiv 0$, in $B_i \setminus B'_i$ we have
	\[
	f(x) = (1 - e^{\Theta(x-x_i) - \theta(R_{i})}) y_{k_i},
	\]
	and finally in $B_i'$ we have $f(x) \equiv (1 - e^{-i}) y_{k_i}$.
	
	By our construction, we observe that $f \in W^{1, \infty}_\loc(\R^n, \R^n)$, $f$ is continuous, and $y_j \in \partial f(\R^n)$ for every $j \in \{1, \dots, q\}$. We also have $J_f \equiv 0$ everywhere since the image of $f$ is a 1-dimensional tree, and $\abs{Df} \equiv 0$ in $\R^n \setminus \bigcup_{i} B_i$ and in every $B_i'$. Hence, we may select $\Sigma \equiv 0$ in these sets, and have $\abs{Df} \leq J_f + \abs{f - y_j}^n \Sigma$ for every $j \in \{1, \dots, q\}$.
	
	It remains to consider the regions $B_i \setminus B'_i$. In these regions, we have
	\[
	\frac{\abs{Df}}{\abs{f - y_{k_i}}} = \frac{\abs{y_{k_i}} e^{\Theta(x-x_i) - \theta(R_{i})} \abs{\nabla \Theta(x-x_i)}}{\abs{y_{k_i}} e^{\Theta(x-x_i) - \theta(R_{i})}} = \abs{\nabla \Theta(x-x_i)}.
	\]
	Moreover, whenever $j \neq k_i$, we may use $e^{\Theta(x-x_i) - \theta(R_{i})} \leq 1$, $\abs{y_j} = 1$, and $\abs{f - y_j} \geq d_0$ to obtain
	\[
	\frac{\abs{Df}}{\abs{f - y_{j}}} = \frac{\abs{y_{j}} e^{\Theta(x-x_i) - \theta(R_{i})} \abs{\nabla \Theta(x-x_i)}}{\abs{f - y_{j}}} \leq d_0^{-1} \abs{\nabla \Theta(x-x_i)}.
	\]
	Thus, we may select $\Sigma = \max(1, d_0^{-n}) \abs{\nabla \Theta(x-x_i)}^n$. Now, since the regions $B_i \setminus B_i'$ are translates of the concentric annuli $\B^n(0, R_i) \setminus \B^n(0, R_{i+1})$ by $x_i$, and since $\abs{\nabla \Theta} \in L^p(\B^n(0, R_1))$ for all $p \in (0, n]$, we obtain that $\Sigma \in L^1(\R^n) \cap L^{1-\eps}(\R^n)$ for every $\eps \in (0, 1)$. Moreover, since $\{x_i\}$ is discrete and since $\Sigma$ is bounded on every $B_i \setminus B_i'$, we get that $\Sigma \in L^\infty_\loc(\R^n)$.
\end{ex}

\begin{ex}\label{ex:minus_epsilon_counterexample}
	We then provide the complementary example, which shows the necessity of the global lower integrability assumption in Theorem \ref{thm:Picard_for_QR_values}. In particular, this time we construct for every $q \in \Z_{> 0}$ a continuous map $f \in W^{1,\infty}_\loc(\R^n, \R^n)$ with $q$ distinct $(1, \Sigma)$-quasiregular values, where $\Sigma \in L^1(\R^n) \cap L^{\infty}(\R^n)$. Our strategy is similar to the one used in Example \ref{ex:plus_epsilon_counterexample}, but we use increasingly large annuli instead of increasingly small ones.
	
	We let $\{y_1, \dots, y_{q}\} \in \S^{n-1}$ and $d_0 > 0$ be as in the previous example. This time, we consider the map $\theta \colon (2, \infty) \to [0, \infty)$ given by
	\[
	\theta(r) = \log^{\frac{n - 1 - \delta}{n}} r,
	\]
	where $\delta \in (0, n-1)$. We define $\Theta \colon \R^n \setminus \B^n(0, 2) \to [0, \infty)$ by $\Theta(x) = \theta(\abs{x})$. Similarly to last time, we have
	\[
	\int_{\R^n \setminus \B^n(0, 2)} \abs{\nabla \Theta}^n \lesssim_{n, \delta} \int_{\R^n \setminus \B^n(0, 2)} \frac{1}{\abs{x}^n \log^{1 + \delta} \abs{x}} < \infty.
	\]
	Moreover, we have $\lim_{r \to \infty} \theta(r) = \infty$ and $\abs{\nabla \Theta} \in L^\infty(\R^n \setminus \B^n(0, 2))$.
	
	We again split $\R^n \setminus \B^n(0, 2)$ into sub-annuli by picking $2 = R_1 < R_2 < \dots$ such that $\theta(R_{i+1}) - \theta(R_i) = i$. We pick $\{x_i\}$ such that the closures of the balls $B_i = \B^n(x_i, R_{i+1})$ are pairwise disjoint; note that this time $\{x_i\}$ is automatically discrete and in fact extremely sparse, as we have $\abs{x_i - x_j} \geq R_i + R_j \geq 4$ whenever $i \neq j$. We also again denote $B_i' = \B^n(x_i, R_{i})$ and $k_i = ({i \mod q}) \in \{1, \dots, q\}$.
	
	We then define $f \colon \R^n \to \R^n$ so that in the set $\R^n \setminus \bigcup_{i} B_i$ we have $f \equiv 0$, in the sets $B_i \setminus B_i'$ we have
	\[
	f(x) = (1 - e^{\Theta(x-x_i) - \theta(R_{i+1})}) y_{k_i},
	\]
	and in the sets $B_i'$ we have $f(x) = (1 - e^{-i}) y_{k_i}$. We again get that $f$ is continuous, that $y_j \in \partial f(\R^n)$ for all $j \in \{1, \dots, q\}$, that $J_f \equiv 0$, and moreover that $f \in W^{1, \infty}(\R^n, \R^n)$. In order for all $y_j$ to be $(\Sigma, 1)$-quasiregular values of $f$, we can again pick $\Sigma \equiv 0$ in $\R^n \setminus \bigcup_i B_i$ and in the sets $B_i'$. Moreover, in the sets $B_i \setminus B_i'$, a similar argument as in the last example shows that we may pick $\Sigma = \max(1, d_0^{-n}) \abs{\nabla \Theta(x-x_i)}^n$, in which case $\Sigma \in L^1(\R^n) \cap L^\infty(\R^n)$.
\end{ex}


\bibliographystyle{abbrv}
\bibliography{sources}

\end{document}